\documentclass[12pt, a4paper]{article}
\usepackage{amsfonts}
\usepackage{mathrsfs}
\usepackage{latexsym}
\usepackage{longtable}
\usepackage{hyperref}
\usepackage{xy}
\usepackage{amsfonts,amsmath,amssymb,amsthm}
\usepackage{color}
\usepackage{bitset}

\xyoption{all}

\newcommand{\bcen}{\begin{center}}     \newcommand{\ecen}{\end{center}}
\newcommand{\bay}{\begin{array}}      \newcommand{\eay}{\end{array}}
\newcommand{\beq}{\begin{eqnarray*}}      \newcommand{\eeq}{\end{eqnarray*}}

\def\Hom{\mathrm{Hom}}
\def\Ker{\mathrm{Ker}}
\def\Im{\mathrm{Im}}
\def\Ext{\mathrm{Ext}}
\def\Tor{\mathrm{Tor}}

\def\Mod{\mathrm{Mod}}

\def\lim{\mathrm{lim}}

\def\Coker{\mathrm{Coker}}

\begin{document}
	
	\newtheorem{theorem}{Theorem}[section]
	\newtheorem{proposition}[theorem]{Proposition}
	\newtheorem{lemma}[theorem]{Lemma}
	\newtheorem{corollary}[theorem]{Corollary}
	\newtheorem{remark}[theorem]{Remark}
	\newtheorem{example}[theorem]{Example}
	\newtheorem{definition}[theorem]{Definition}
	\newtheorem{question}[theorem]{Question}
	\numberwithin{equation}{section}

	\title{\large\bf
 Gluing of cotorsion pairs via recollements
of abelian categories}
	
	\author{\large Jinrui Yang, Yongyun Qin }
	\date{\footnotesize School of Mathematics, Yunnan Key Laboratory of Modern Analytical Mathematics and Applications,
Yunnan Normal University, Kunming, Yunnan 650500, China.
\\ E-mail: qinyongyun2006@126.com
	}
	
	\maketitle
	
	\begin{abstract} Let $(  \mathcal{A^{'}},\mathcal{A},\mathcal{A^{''}},i^\ast,i_\ast,i^!,j_!,j^\ast,j_\ast)$
be a recollement of abelian categories. Suppose
that we are given two cotorsion pairs $({\mathcal{U^{'}}},\mathcal{V{'}})$ and $({\mathcal{U}^{''}},{\mathcal{V}^{''}})$ in $\mathcal{A}^{'}$ and $\mathcal{A}^{''}$, respectively. We construct two cotorsion pairs
  $(^{\bot}{\mathcal{N}_{\mathcal{V^{''}}}^{\mathcal{V^{'}}}},
\mathcal{N}_{\mathcal{V^{''}}}^{\mathcal{V^{'}}})$ and
$(\mathcal{M}_{\mathcal{U^{''}}}^{\mathcal{U^{'}}}, ({\mathcal{M}_{\mathcal{U^{''}}}^{\mathcal{U^{'}}}})^\bot)$ in $\mathcal{A}$.
Moreover, we provide a sufficient condition for these two cotorsion pairs to coincide, and we
investigate the heredity and completeness of $(\mathcal{M}_{\mathcal{U^{''}}}^{\mathcal{U^{'}}}, \mathcal{N}_{\mathcal{V^{''}}}^{\mathcal{V^{'}}})$.
These results are applied to construct new cotorsion pairs in Morita rings. In the course of proof, we introduce a specific constraint on recollements of abelian categories, requiring
$\varepsilon_P$ to be a monomorphism for any projective $P \in \mathcal{A}$, with $\varepsilon: j_!j^* \to \mathrm{id}_{\mathcal{A}}$ being the counit of $(j_!, j^*)$. Such recollements enjoy rich homological properties and hence might be of independent
interest.
\end{abstract}
	\medskip
	
	{\footnotesize {\bf MSC2020}:
		16B50; 16E30; 18G10; 18G15; 18G25.}
	
	\medskip
	
	{\footnotesize {\bf Keywords}: recollements; cotorsion pairs; Morita rings; triangular matrix rings. }
	
	\bigskip
	\section{\large Introduction}
	
	\indent\indent
	The notion of cotorsion pairs was introduced by Salce \cite{SL} as an analog of the classical torsion pairs
where the orthogonality of $\Hom$ is replaced by $\Ext ^1$.
Cotorsion pairs provide a perspective on resolution within a given category, and they are intricately
connected to other homological concepts of significant interest in category theory and representation
theory. Recently, the concept of cotorsion pairs has received a significant boost, largely due to Hovey's correspondence between abelian model structures and certain cotorsion pairs in abelian categories \cite{GJ, HM}.
Therefore, it is crucial to construct cotorsion pairs
in relative homological algebra and model structures \cite{EEE, GR, HM}.

Recollements have been introduced by Beilinson-Bernstein-Deligne in the context of triangulated categories \cite{BBD82},
and have been studied in abelian categories by many authors \cite{FV, GKP21, Psa14, PV14}.
It should be noted that recollements
of abelian categories appear more naturally than those of triangulated categories,
because any idempotent element of a ring induces a recollement situation
between module categories. Moreover, recollements of abelian categories
are very useful in gluing and in reduction; see for example
\cite{AKL11, Chen12, LVY14, MX2, YQ26, ZC17, Z17}. In particular,
many experts investigated when (complete hereditary) cotorsion
pairs in $\mathcal{A'}$ and $\mathcal{A''}$ can induce (complete hereditary) cotorsion pairs in
$\mathcal{A}$, where $(\mathcal{A^{'}}, \mathcal{A}, \mathcal{A^{''}}, i^\ast, i_\ast, i^!, j_!, j^\ast, j_\ast)$ is
a recollement of abelian (exact, triangulated or extriangulated) categories
\cite{Chen13, FH, HJS, MZ25, WWZ22}, and some authors extend this gluing techniques
form cotorsion
pairs to $n$-cotorsion pairs \cite{CW, HJ, MX}.
In this paper, we continue to explore the conditions under which we can
glue (complete hereditary) cotorsion
pairs along recollements of abelian categories,
where the known gluing techniques are valid only under the assumption that both $i^*$ and $i^!$ are exact,
or just $i^!$ is exact; see \cite{FH, HJS, MZ25, WWZ22}.
However, this assumption appears quite restrictive,
because recollements of abelian categories where
$i^!$ is exact only can exist in the case of comma categories or triangular matrix rings
\cite{FV, GKP21}. It is well
known that the exactness of $i^!$ (resp. $i^*$) implies the exactness of $j_*$
(resp. $j_!$); see for example \cite{FZ17, TO79}.
In this paper, we generalize the known gluing techniques on cotorsion pairs and recollements of abelian categories
by relaxing the exactness condition on $i^!$ or $i^*$, requiring only that the derived functor  $\mathbb{R}_1j_*$ or $\mathbb{L}_1j_!$ vanish on certain objects.
 To state our results precisely, we first introduce some notation.

Throughout this paper,
all abelian categories are assumed to have enough projective and injective objects.
Let $(  \mathcal{A^{'}}, \mathcal{A}, \mathcal{A^{''}}, i^\ast, i_\ast, i^!, j_!, j^\ast, j_\ast)$
be a recollement of abelian categories, and let ${\mathcal{X}}$ (resp. ${\mathcal{Y}}$)
be a subcategory of ${\mathcal{A}'}$ (resp. ${\mathcal{A}''}$).
Motivated by Hu-Zhu-Zhu
\cite{HJS}, we set
\begin{center}

$\mathcal{N}^{\mathcal{X}}_{\mathcal{Y}}=\{X\in\mathcal{A}\ |\ i^!X\in{\mathcal{X}},\ j^*X\in{\mathcal{Y}}
,\ (\mathbb{R}_1i^!)(X)=0\},$

$\mathcal{M}^{\mathcal{X}}_{\mathcal{Y}}=\{X\in\mathcal{A}\ |\ i^*X\in\mathcal{X} ,\ j^*X\in\mathcal{Y},\ (\mathbb{L}_1i^*)(X)=0\}.$

\end{center}
For a class of objects $\mathcal{C}$ in ${\mathcal{A}}$, we
denote by $\mathcal{C}^\perp$ the class of $X$ in $\mathcal{A}$ such that $\Ext^{1}_{\mathcal{A}}(C,X)=0$ for all $C\in\mathcal{C}$,
and the notation $^{\bot}\mathcal{C}$ is defined similarly.

\begin{theorem} {\rm (Theorem~\ref{3.3})} \label{1.1}
  Assume that $(\mathcal{U^{'}},\mathcal{V^{'}})$ and $(\mathcal{U^{''}},\mathcal{V{''}})$ are cotorsion pairs in $\mathcal{A^{'}}$ and $\mathcal{A^{''}}$, respectively. If $(\mathbb{L}_1j_!)(\mathcal{U^{''}})=0$ then
 $(^{\bot}{\mathcal{N}_{\mathcal{V^{''}}}^{\mathcal{V^{'}}}},
\mathcal{N}_{\mathcal{V^{''}}}^{\mathcal{V^{'}}})$ is a cotorsion pair in $\mathcal{A}$;
If $(\mathbb{R}_1j_*)(\mathcal{V}^{''})=0$ then
$(\mathcal{M}_{\mathcal{U^{''}}}^{\mathcal{U^{'}}},
({\mathcal{M}_{\mathcal{U^{''}}}^{\mathcal{U^{'}}}})^\bot)$
is a cotorsion pair in $\mathcal{A}$.
\end{theorem}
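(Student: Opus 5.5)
The plan is to exhibit $\mathcal{N}_{\mathcal{V''}}^{\mathcal{V'}}$ as a right orthogonal class $\mathcal{C}^\perp$ for an explicit class $\mathcal{C}\subseteq\mathcal{A}$. This suffices because for any class $\mathcal{C}$ one has $({}^{\perp}(\mathcal{C}^\perp))^\perp=\mathcal{C}^\perp$. Indeed, $\mathcal{C}\subseteq{}^{\perp}(\mathcal{C}^\perp)$ gives $\supseteq$, and $\subseteq$ is tautological. Hence $({}^{\perp}(\mathcal{C}^\perp),\mathcal{C}^\perp)$ is always a cotorsion pair. My candidate is
$$\mathcal{C}=i_*(\mathcal{U'})\cup j_!(\mathcal{U''}).$$
The second statement will be proved dually with $\mathcal{D}=i_*(\mathcal{V'})\cup j_*(\mathcal{V''})$, showing $\mathcal{M}_{\mathcal{U''}}^{\mathcal{U'}}={}^{\perp}\mathcal{D}$.

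\textbf{Step 1 (the $j_!$ part).} I will show that $\Ext^1_{\mathcal{A}}(j_!U,X)\cong\Ext^1_{\mathcal{A''}}(U,j^*X)$ for $U\in\mathcal{U''}$. Choose $0\to K\to P\to U\to 0$ in $\mathcal{A''}$ with $P$ projective. Since $(\mathbb{L}_1j_!)(U)=0$, the sequence $0\to j_!K\to j_!P\to j_!U\to 0$ is exact. Moreover $j_!P$ is projective, because $j_!$ is left adjoint to the exact functor $j^*$. So $\Ext^1_{\mathcal{A}}(j_!U,X)$ is the cokernel of $\Hom(j_!P,X)\to\Hom(j_!K,X)$. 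By adjunction this is the cokernel of $\Hom(P,j^*X)\to\Hom(K,j^*X)$, which is $\Ext^1_{\mathcal{A''}}(U,j^*X)$. Consequently, $\Ext^1(j_!\mathcal{U''},X)=0$ if and only if $j^*X\in\mathcal{U''}^\perp=\mathcal{V''}$.

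\textbf{Step 2 (the $i_*$ part).} Since $i_*$ is exact with right adjoint $i^!$, the functor $i^!$ preserves injectives. Applying $\Hom_{\mathcal{A}}(i_*U,-)$ to an injective resolution of $X$ and using adjunction gives a Grothendieck spectral sequence $\Ext^p_{\mathcal{A'}}(U,(\mathbb{R}^qi^!)X)\Rightarrow\Ext^{p+q}_{\mathcal{A}}(i_*U,X)$. Its five-term exact sequence is
$$0\to\Ext^1(U,i^!X)\to\Ext^1(i_*U,X)\to\Hom(U,(\mathbb{R}_1i^!)X)\to\Ext^2(U,i^!X).$$
Taking $U=P'$ projective yields $\Ext^1(i_*P',X)\cong\Hom(P',(\mathbb{R}_1i^!)X)$. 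Since $\mathcal{A'}$ has enough projectives, $(\mathbb{R}_1i^!)X=0$ if and only if $\Ext^1(i_*P',X)=0$ for all projective $P'$. Note that all projectives of $\mathcal{A'}$ lie in $\mathcal{U'}$. Therefore, if $X\in\mathcal{C}^\perp$, then $(\mathbb{R}_1i^!)X=0$, and the sequence collapses to $\Ext^1(i_*U,X)\cong\Ext^1(U,i^!X)$, which forces $i^!X\in\mathcal{U'}^\perp=\mathcal{V'}$. Conversely, if $X\in\mathcal{N}$, the same isomorphism gives $\Ext^1(i_*\mathcal{U'},X)=0$. Combining with Step 1 yields $\mathcal{N}_{\mathcal{V''}}^{\mathcal{V'}}=\mathcal{C}^\perp$.

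\textbf{Dual part.} The second claim is the exact dual. Here $i_*$ has the left adjoint $i^*$, so it preserves injectives in the appropriate sense, and one uses the spectral sequence for $\Ext(X,i_*V)$ involving $\mathbb{L}_qi^*$. Injective cogenerators lying in $\mathcal{V'}$ detect $(\mathbb{L}_1i^*)X=0$. For $j_*$, the hypothesis $(\mathbb{R}_1j_*)(\mathcal{V''})=0$ together with injective coresolutions gives $\Ext^1(X,j_*V)\cong\Ext^1(j^*X,V)$.

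\textbf{Main obstacle.} The main point to check carefully is Step 2: justifying the low-degree exact sequence, in particular that $i^!$ sends injectives to injectives, or, dually, that $i^*$ sends projectives to projectives. This follows from the adjunctions with the exact functor $i_*$.
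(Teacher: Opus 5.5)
Your proof is correct and is essentially the paper's argument. The paper uses the same classes $i_*(\mathcal{U'})\cup j_!(\mathcal{U''})$ and $i_*(\mathcal{V'})\cup j_*(\mathcal{V''})$, your Step 1 is exactly Lemma~\ref{3.1}(4), and your identification $\mathcal{C}^\perp=\mathcal{N}_{\mathcal{V''}}^{\mathcal{V'}}$ is Lemma~\ref{lem-syz}(1). The only difference is how you detect $(\mathbb{R}_1i^!)X=0$: you use the five-term sequence of the Grothendieck spectral sequence, whereas the paper uses an elementary lifting argument (surjectivity of $\Hom(P_1,i^!I)\to\Hom(P_1,i^!C)$ for an epimorphism $P_1\to i^!C$) and then invokes Lemma~\ref{3.1}(1). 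One small slip: in the dual part the fact you need is that $i^*$ preserves projectives, as you say at the end, not that $i_*$ preserves injectives, which need not hold.
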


Generally, it is non-trivial to characterize the classes $^{\bot}{\mathcal{N}_{\mathcal{V^{''}}}^{\mathcal{V^{'}}}}$ and $({\mathcal{M}_{\mathcal{U^{''}}}^{\mathcal{U^{'}}}})^\bot$. Our next goal is to determine when
$^{\bot}{\mathcal{N}_{\mathcal{V^{''}}}^{\mathcal{V^{'}}}}
=\mathcal{M}_{\mathcal{V^{''}}}^{\mathcal{V^{'}}}$
and $({\mathcal{M}_{\mathcal{U^{''}}}^{\mathcal{U^{'}}}})^\bot=\mathcal{N}_{\mathcal{U^{''}}}^{\mathcal{U^{'}}}$, which would imply that $(\mathcal{M}_{\mathcal{U^{''}}}^{\mathcal{U^{'}}}, \mathcal{N}_{\mathcal{V^{''}}}^{\mathcal{V^{'}}})$
is a cotorsion pair. For this purpose, we add a stronger assumption on the recollement.
We say that a recollement of abelian categories $(\mathcal{A^{'}}, \mathcal{A}, \mathcal{A^{''}},
 i^\ast, i_\ast, i^!, j_!, j^\ast, j_\ast)$
{\it satisfies the condition (P)} if
$\varepsilon_P$ is a monomorphism for any projective object $P$ in $\mathcal{A}$,
where $\varepsilon :j_!j^*\rightarrow 1_{\mathcal{A}}$
is the counit of the adjoint pair $(j_!, j^*)$.
Under this condition, we prove in Proposition~\ref{prop-two-contor-equal} that the equality
$(^{\bot}{\mathcal{N}_{\mathcal{V^{''}}}^{\mathcal{V^{'}}}},
\mathcal{N}_{\mathcal{V^{''}}}^{\mathcal{V^{'}}})=(\mathcal{M}_{\mathcal{U^{''}}}^{\mathcal{U^{'}}},
({\mathcal{M}_{\mathcal{U^{''}}}^{\mathcal{U^{'}}}})^\bot)$ holds.
Additionally, we establish the following result concerning the heredity and completeness of $(\mathcal{M}_{\mathcal{U^{''}}}^{\mathcal{U^{'}}}, \mathcal{N}_{\mathcal{V^{''}}}^{\mathcal{V^{'}}})$.

\begin{theorem} {\rm (Theorem~\ref{3.8(1)})} \label{1.2}
Let $(  \mathcal{A^{'}},\mathcal{A},\mathcal{A^{''}},i^\ast,i_\ast,i^!,j_!,j^\ast,j_\ast)$ be a recollement of abelian categories satisfying condition (P). Assume that $(\mathcal{U^{'}},\mathcal{V^{'}})$ and $(\mathcal{U^{''}},\mathcal{V{''}})$ are complete hereditary cotorsion pairs in $\mathcal{A^{'}}$ and $\mathcal{A^{''}}$, respectively. If $(\mathbb{L}_1j_!)(\mathcal{U^{''}})=0$ and $(\mathbb{R}_1j_*)(\mathcal{V}^{''})=0$,
then $(\mathcal{M}_{\mathcal{U^{''}}}^{\mathcal{U^{'}}}, \mathcal{N}_{\mathcal{V^{''}}}^{\mathcal{V^{'}}})$ is a complete hereditary cotorsion pair in $\mathcal{A}$.
\end{theorem}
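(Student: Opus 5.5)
We first show that $(\mathcal{M}_{\mathcal{U^{''}}}^{\mathcal{U^{'}}}, \mathcal{N}_{\mathcal{V^{''}}}^{\mathcal{V^{'}}})$ is a cotorsion pair, then prove heredity, and finally completeness. Under condition (P) and the two vanishing hypotheses, Theorem~\ref{1.1} gives two cotorsion pairs $(^{\bot}\mathcal{N}_{\mathcal{V^{''}}}^{\mathcal{V^{'}}},\mathcal{N}_{\mathcal{V^{''}}}^{\mathcal{V^{'}}})$ and $(\mathcal{M}_{\mathcal{U^{''}}}^{\mathcal{U^{'}}},(\mathcal{M}_{\mathcal{U^{''}}}^{\mathcal{U^{'}}})^\bot)$. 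Proposition~\ref{prop-two-contor-equal} says they coincide, so $(\mathcal{M}_{\mathcal{U^{''}}}^{\mathcal{U^{'}}}, \mathcal{N}_{\mathcal{V^{''}}}^{\mathcal{V^{'}}})$ is a cotorsion pair.

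\emph{Heredity.} Since $\mathcal{A}$ has enough projectives, it suffices to show that $\mathcal{M}:=\mathcal{M}_{\mathcal{U^{''}}}^{\mathcal{U^{'}}}$ is closed under kernels of epimorphisms. Let $0\to X_1\to X_2\to X_3\to 0$ be exact with $X_2,X_3\in\mathcal{M}$. Because $j^*$ is exact and $\mathcal{U''}$ is resolving, $j^*X_1\in\mathcal{U''}$. The long exact sequence of left derived functors of $i^*$ gives
\[
\mathbb{L}_2i^*X_3\to \mathbb{L}_1i^*X_1\to \mathbb{L}_1i^*X_2=0\to \mathbb{L}_1i^*X_3=0\to i^*X_1\to i^*X_2\to i^*X_3\to 0 .
\]
If $\mathbb{L}_2i^*X_3=0$, then $\mathbb{L}_1i^*X_1=0$, and $i^*X_1\in\mathcal{U'}$ because $\mathcal{U'}$ is resolving. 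So the key step is to show that $\mathbb{L}_n i^*$ vanishes for all $n\ge 2$ in a recollement satisfying (P). For this I would use the exact sequence $0\to j_!j^*P\xrightarrow{\varepsilon_P} P\to i_*i^*P\to 0$ for projective $P$. Applying $i^*$ to a projective resolution then identifies $\mathbb{L}_n i^*$ with $\mathbb{L}_{n-1}$ of $i^*j_!j^*=0$ up to a shift, which should give $\mathbb{L}_{\ge 2}i^*=0$. The dual statement for $\mathcal{N}$ is not needed, since heredity is equivalent for the two halves once we have enough projectives and injectives. I expect this vanishing under (P) to be the main obstacle. If it fails in general, I would instead show the Ext-vanishing $\Ext^2_{\mathcal{A}}(\mathcal{M},\mathcal{N})=0$ directly, via the adjunctions $\Ext^n_{\mathcal{A}}(X,i_*V')\cong\Ext^n_{\mathcal{A}'}(i^*X,V')$ for $X$ with $\mathbb{L}_{\ge1}i^*X=0$, and $\Ext^n_{\mathcal{A}}(X,j_*V'')\cong\Ext^n_{\mathcal{A}''}(j^*X,V'')$ when $\mathbb{R}_{\ge1}j_*V''=0$.

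\emph{Completeness.} Given $X\in\mathcal{A}$, I would glue approximations in two steps. First take a special $\mathcal{U''}$-precover $0\to V''\to U''\to j^*X\to 0$. Apply $j_!$, where exactness holds because $\mathbb{L}_1j_!(\mathcal{U''})=0$. Form a pullback along $j_!j^*X\to X$ (or push out), and use the sequence $0\to j_!j^*\to 1\to i_*i^*\to 0$, valid on the relevant objects by (P). Then take a special $\mathcal{U'}$-precover of $i^*$ of the result, pull back along $i_*$, and glue using the horseshoe-type construction. Check that the middle term lies in $\mathcal{M}$ by the long exact sequences for $i^*$ and $j^*$, and that the kernel lies in $\mathcal{N}$ using $\mathbb{R}_1j_*(\mathcal{V''})=0$ and heredity. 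Alternatively, special precovers alone suffice, since Salce's lemma then yields the preenvelopes because $\mathcal{A}$ has enough injectives.
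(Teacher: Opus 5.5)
\textbf{Heredity.} Your cotorsion-pair step (Theorem~\ref{1.1} plus Proposition~\ref{prop-two-contor-equal}) is fine and matches the paper. The heredity argument, however, rests on a false lemma: condition (P) does \emph{not} force $\mathbb{L}_n i^*=0$ for $n\ge 2$.

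Here is a counterexample. Take the Morita ring with $A=B=\mathbb{Z}$, $M=\mathbb{Z}/2$, $N=0$, i.e.\ $\Lambda=\bigl(\begin{smallmatrix}\mathbb{Z}&0\\ \mathbb{Z}/2&\mathbb{Z}\end{smallmatrix}\bigr)$. Here $\phi$ is trivially monic, so the first recollement satisfies (P) by Lemma~\ref{4.2}; even $i^!=P_B$ is exact. Write modules as pairs $(X,Y)$ with structure map $f$, so that $i^*=Q_B=\Coker f$ and $j_!=T_A$. Since $(\mathbb{Z},\mathbb{Z}/2)=\Lambda e$ and $(0,\mathbb{Z})=\Lambda(1-e)$ are projective, there is a projective resolution $0\to(0,\mathbb{Z})\xrightarrow{(0,2)}(0,\mathbb{Z})\to(\mathbb{Z},\mathbb{Z}/2)\xrightarrow{(2,0)}(\mathbb{Z},\mathbb{Z}/2)\to T_A(\mathbb{Z}/2)\to 0$. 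Applying $\Coker f$ gives the complex $\mathbb{Z}\xrightarrow{2}\mathbb{Z}\to 0\to 0$, so $\mathbb{L}_2 i^*(j_!\,\mathbb{Z}/2)\cong\mathbb{Z}/2\neq 0$.

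Your shift heuristic is what goes wrong. The degreewise exact sequence $0\to j_!j^*P_\bullet\to P_\bullet\to i_*i^*P_\bullet\to 0$ identifies $i_*\mathbb{L}_n i^*X$ (for $n\ge 2$) with $H_{n-1}(j_!j^*P_\bullet)$. When $j^*$ preserves projectives, as here, this is $\mathbb{L}_{n-1}j_!(j^*X)$; it is not a derived functor of $i^*j_!j^*=0$.

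So $\mathbb{L}_2 i^*X_3=0$ holds only because $X_3\in\mathcal{M}$, and this is exactly where the hypothesis $\mathbb{L}_1 j_!(\mathcal{U}'')=0$ must enter. Your heredity argument never uses it. The paper's proof of Theorem~\ref{3.7} supplies the missing step:
\begin{itemize}
\item since $\mathbb{L}_1 j_!(j^*X_3)=0$, the sequence $0\to j_!j^*X_1\to j_!j^*X_2\to j_!j^*X_3\to 0$ is exact;
\item $\varepsilon_{X_2}$ is monic by (P) and Lemma~\ref{3.4}(2), hence so is $\varepsilon_{X_1}$ by naturality;
\item Lemma~\ref{3.4}(2) then gives $\mathbb{L}_1 i^*X_1=0$.
\end{itemize}
Run on $0\to K\to P\to X_3\to 0$, the same argument also rescues your route, giving $\mathbb{L}_2 i^*X_3\cong\mathbb{L}_1 i^*K=0$. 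Your $\Ext^2$ fallback has the same hole. It needs $\mathbb{L}_{\ge 1}i^*X=0$ for $X\in\mathcal{M}$, but membership in $\mathcal{M}$ only gives $\mathbb{L}_1 i^*X=0$.

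\textbf{Completeness.} This part is only a sketch, and its first concrete step is wrong. Applying $j_!$ to $0\to V''\to U''\to j^*X\to 0$ gives a short exact sequence only if the connecting map $\mathbb{L}_1 j_!(j^*X)\to j_!V''$ vanishes. The hypothesis $\mathbb{L}_1 j_!(\mathcal{U}'')=0$ concerns the middle term and in fact makes this map injective. Left exactness therefore fails whenever $\mathbb{L}_1 j_!(j^*X)\neq 0$, e.g.\ for $X=j_!\,\mathbb{Z}/2$ above.

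Likewise, $0\to j_!j^*X\to X\to i_*i^*X\to 0$ is exact only when $\varepsilon_X$ is monic, i.e.\ when $\mathbb{L}_1 i^*X=0$. For general $X$ the kernel $i_*M'$ of Remark~\ref{2.4}(3) is present. Your ``horseshoe'' gluing does not say how to absorb it, nor how to check $\mathbb{R}_1 i^!=0$ and $i^!(\cdot)\in\mathcal{V}'$ for the kernel of the glued map.

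The paper's route avoids these problems:
\begin{itemize}
\item it applies $j_*$, not $j_!$, to the $\mathcal{U}''$-precover of $j^*X$; this is exact since $\mathbb{R}_1 j_*(V'')=0$, which is what that hypothesis is for;
\item it gets special $\mathcal{M}$-precovers of every $i_*Y$ directly from $(\mathcal{U}',\mathcal{V}')$ and Lemma~\ref{chang};
\item it treats $j_*U''$ via the sequence $0\to i_*X'\to j_!U''\to j_*U''\to i_*i^*j_*U''\to 0$ and a pushout;
\item it handles arbitrary $X$ via $0\to i_*i^!X\to X\to j_*j^*X\to i_*Y'\to 0$, using Lemmas~\ref{yong1} and~\ref{yong2}.
\end{itemize}
Those two lemmas need $\mathcal{M}$ to be resolving, so heredity has to be settled first. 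Your remark that special precovers suffice, by Proposition~\ref{co1}, is correct.
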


Recollements of abelian categories satisfying condition (P) arise naturally in representation theory.
First, such a condition is satisfied whenever
$i^*$ or $i^!$ is exact; see Proposition~\ref{pro-exact-recoll}. Consequently, Theorem~\ref{1.2} extends
the known gluing methods on cotorsion pairs and recollements of abelian categories \cite{CW, HJS} by weakening the exactness requirement on
$i^!$ or $i^*$; see Remark~\ref{rek-exact-recoll}. Second, a triangular matrix ring yields
two recollements of module categories which satisfy the condition (P), and then
Theorem~\ref{1.1}
and Theorem~\ref{1.2} recover and strength some results on cotorsion pairs and triangular matrix rings in \cite{ML, ZR};
see Remark~\ref{rek-zhu-Mao-cotor}.
Finally, the recollement associated with Morita ring $\Lambda_{(\phi,\psi)}={\scriptsize\left( \begin{array} {cc}
	A	&_AN_B  \\
_BM_A	& B
	\end{array}\right)  }$ satisfies condition (P) whenever $\phi$ or $\psi$ is a monomorphism,
thereby allowing Theorem~\ref{1.2} to be applied for constructing new cotorsion pairs on such rings; see Corollary~\ref{cor-motia-contor-1}
and~\ref{cor-motia-contor-2}. In particular, when $M\otimes _AN=0$ or $N\otimes _BM=0$,
we construct some explicit cotorsion pairs distinct from those in \cite{ZP}; see Corollary~\ref{cor-motia-contor-3}
and~\ref{cor-motia-contor-4}.

	This paper is organized as follows. In section \ref{Section-definitions and conventions}, we give some
	definitions and some preliminary results which are needed for our proof.
	In section \ref{Gluing cotorsion pairs along recollements}, we glue cotorsion pairs
by recollements of abelian categories, and we prove Theorem~\ref{1.1} and Theorem~\ref{1.2} mentioned in the introduction.
In section \ref{4}, we give some applications in Morita rings and triangular matrix rings.

	\section{\large Preliminaries}\label{Section-definitions and conventions}
	
	\indent\indent Throughout $\mathcal{A}$ is an abelian category with enough projective and injective objects.
We denote by $\mathcal{P_{\mathcal{A}}}$ the class of projective objects of $\mathcal{A}$ and by $\mathcal{I_{\mathcal{A}}}$ the class of injective objects of $\mathcal{A}$.
The composition of two homomorphisms $f:X\rightarrow Y$ and $g:Y\rightarrow Z$ will be denoted by $gf$.
Throughout this paper, all subcategories are assumed to be full subcategories which are closed under isomorphisms.

First, we recall the definition of recollement of abelian categories; see for instance \cite{BBD82, FV}.
	\begin{definition}\label{reco} {\rm Let $\mathcal{A}$, $\mathcal{A^{'}}$ and $\mathcal{A^{''}}$ be abelian categories. A {\it recollement} of $\mathcal{A}$ relative
			to $\mathcal{A^{'}}$ and $\mathcal{A^{''}}$
is given by the diagram
		$$\xymatrix@!=4pc{ \mathcal{A}^{'} \ar[r]|{i_{\ast}} & \mathcal{A} \ar@<-3ex>[l]_{i^{\ast}}
			\ar@<+3ex>[l]^{i^!} \ar[r]|{j^{\ast}} & \mathcal{A}^{''}
			\ar@<-3ex>[l]_{j_!} \ar@<+3ex>[l]^{j_{\ast}}}  $$
 henceforth denoted by $( \mathcal{A^{'}},\mathcal{A},\mathcal{A^{''}},i^\ast,i_\ast,i^!,j_!,j^\ast,j_\ast)$, satisfying the following conditions:
		
		(1) $(i^{\ast},i_{\ast}), (i_{\ast},i^!), (j_{!},j^{\ast})$ and $(j^{\ast},j_{\ast})$ are adjoint
		pairs;

		(2) $i_{\ast}$, $j_!$ and $j_{\ast}$ are fully faithful functors;

		(3) $\Ker j^{\ast}=\Im i_{\ast}$ .
		
		 }
	\end{definition}

In the following remark we isolate some easily
 established properties of a recollement situation which will be useful later.
		\begin{remark}\label{2.4}
		{\rm (1) The functors $i_{\ast}$ and $j^{\ast}$ are exact,
$i^*$ and $j_!$ are right exact, and $i^!$ and $j_*$ are left exact.
This indicates that $i^*$ and $j_!$ preserve projective objects, and $i^!$ and $j_*$ preserve injective objects.

		(2) There are four natural isomorphisms $i^{\ast}i_{\ast}\cong{1 _{\mathcal{A}'}},
i^{!}i_{\ast}\cong{1  _{\mathcal{A}'}},j^{\ast}j_{!}\cong{1  _{\mathcal{A}''}}$ and $j^{\ast}j_{\ast}\cong{1  _{\mathcal{A}''}}$
given by the units and the counits of the adjoint
 pairs, respectively.
		
		(3) (See \cite[Proposition 2.6]{Psa14} or \cite[Lemma 3.1]{FZ17})
For any $A\in{\mathcal{A}}$, there exist $M^{'}, N^{'}\in \mathcal{A^{'}}$ such that the
		units and counits induce the following two exact sequences
		$$0\rightarrow i_{\ast}(M^{'}) \rightarrow j_{!}j^{\ast}(A) \overset{\varepsilon_A}{\rightarrow} A \rightarrow i_{\ast}i^{\ast}(A) \rightarrow0,$$
		$$0\rightarrow i_{\ast}i^{!}(A) \rightarrow A \overset{\delta_A}{\rightarrow} j_{\ast}j^{\ast}(A)  \rightarrow i_{\ast}(N^{'}) \rightarrow0.$$
		
(4) It follows from the adjunction that $\Im j_!\subseteq \Ker i^*$ and $\Im j_*\subseteq \Ker i^!$. }

	\end{remark}

Now we recall the notion of cotorsion pairs.
\begin{definition}\label{cotorsion pair}{\rm(\cite{SL})} {\rm
A pair ($\mathcal{C},\mathcal{D}$) of classes of objects in an abelian
category $\mathcal{A}$ is called a {\it cotorsion pair} if $\mathcal{C}^\perp=\mathcal{D}$ and $^{\perp}\mathcal{D}=\mathcal{C}$. Here $\mathcal{C}^\perp$ is the class of $X$ in $\mathcal{A}$ such that $\Ext^{1}_{\mathcal{A}}(C,X)=0$ for all $C\in\mathcal{C}$, and similarly $^{\perp}\mathcal{D}$ is the class of $Y$ in $\mathcal{A}$ such that $\Ext^{1}_{\mathcal{A}}(Y,D)=0$ for all $D\in\mathcal{D}$.}
\end{definition}

From \cite{EEE, GR}, a cotorsion pair ($\mathcal{C},\mathcal{D}$) is {\it complete} if for each $M\in\mathcal{A}$, there exist
two short exact sequences
\begin{center}
  $\xymatrix@C=1.5pc{0\ar[r]& D_1 \ar[r]& C_1\ar[r]^{f} & M\ar[r]  & 0}$,  $\xymatrix@C=1.5pc{0\ar[r]& M \ar[r]^{g}& D_2\ar[r] &C_2\ar[r]  & 0}$
\end{center}
with $C_1,C_2\in \mathcal{C}$, $D_1,D_2\in \mathcal{D}$.

\begin{proposition}\label{co1}{\rm(\cite[Proposition 7.17]{EEE})}
{\it Let $(\mathcal{C},\mathcal{D})$ be a cotorsion pair in $\mathcal{A}$.
 Then the following are equivalent:

 {\rm(1)} $(\mathcal{C},\mathcal{D})$ is complete;

 {\rm(2)} For any $X\in\mathcal{A}$, there is a short exact sequence $0\rightarrow D\rightarrow C \rightarrow X \rightarrow0$ with $C\in\mathcal{C}$ and $D\in\mathcal{D}$;

 {\rm(3)} For any $X\in\mathcal{A}$, there is a short exact sequence $0\rightarrow X\rightarrow D^{'} \rightarrow C^{'} \rightarrow0$ with $C^{'}\in\mathcal{C}$ and $D^{'}\in\mathcal{D}$.
}
\end{proposition}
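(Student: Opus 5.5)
This is the standard Salce-type argument, built from Ext long exact sequences together with the cotorsion pair identities $\mathcal{C}={}^\perp\mathcal{D}$ and $\mathcal{D}=\mathcal{C}^\perp$. By symmetry (passing to the opposite category, which swaps projectives and injectives), it suffices to prove (1)$\Rightarrow$(2)$\Leftrightarrow$(3)$\Rightarrow$(1), and (1) trivially implies both (2) and (3).

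\textbf{(2)$\Rightarrow$(3).} Fix $X\in\mathcal{A}$. Since $\mathcal{A}$ has enough injectives, choose a short exact sequence $0\to X\to I\to Y\to 0$ with $I$ injective. Apply (2) to $Y$ to get $0\to D\to C\to Y\to 0$ with $C\in\mathcal{C}$, $D\in\mathcal{D}$. Form the pullback $E$ of $I\to Y\leftarrow C$. This gives two exact sequences,
$$0\to D\to E\to I\to 0 \quad\text{and}\quad 0\to X\to E\to C\to 0.$$
Injective objects lie in $\mathcal{D}$, because $\Ext^1(-,I)=0$ and $\mathcal{D}=\mathcal{C}^\perp$. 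The class $\mathcal{D}=\mathcal{C}^\perp$ is closed under extensions, by the long exact Ext sequence. Applying this to the first sequence gives $E\in\mathcal{D}$, and the second sequence is the required one. The proof of (3)$\Rightarrow$(2) is dual: use enough projectives and a pushout, and note that $\mathcal{C}={}^\perp\mathcal{D}$ contains the projectives and is closed under extensions.

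\textbf{The only real point.} The argument relies on two facts: projectives and injectives lie in the appropriate classes, and orthogonal classes are closed under extensions. Both follow immediately from the long exact $\Ext$ sequence and the definition of a cotorsion pair. The step needing the most care is checking that the pullback diagram has exact rows and columns, and that its kernels are the asserted ones; this is the usual pullback lemma in an abelian category. Combining (2) and (3) then gives completeness, which establishes the equivalence of all three conditions.
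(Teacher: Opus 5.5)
Your proof is correct and is essentially the standard Salce pullback/pushout argument behind the Enochs--Jenda result the paper cites; the paper itself gives no proof beyond that citation. Your version also shows that only enough injectives are needed for (2)$\Rightarrow$(3) and only enough projectives for (3)$\Rightarrow$(2), which justifies using this module-theoretic reference in the paper's abelian setting.
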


From \cite{GR}, a cotorsion pair ($\mathcal{C},\mathcal{D}$) is {\it hereditary} if
$\mathcal{C}$ is closed under the kernel of epimorphisms,
and $\mathcal{D}$ is closed under the cokernel of monomorphisms.
For hereditary cotorsion pairs, we have the following proposition.

\begin{proposition}\label{he1}{\rm(\cite[Theorem 1.2.10]{JR})}
{ \it Let $(\mathcal{C},\mathcal{D})$ be a cotorsion pair in $\mathcal{A}$.
 Then the following are equivalent:

 {\rm(1)} $(\mathcal{C},\mathcal{D})$ is hereditary;

 {\rm(2)} $\mathcal{C}$ is closed under the kernel of epimorphisms;

 {\rm(3)} $\mathcal{D}$ is closed under the cokernel of monomorphisms;

 {\rm(4)} $\Ext^2_{\mathcal{A}}(\mathcal{C},\mathcal{D})=0$;

  {\rm(5)} $\Ext^i_{\mathcal{A}}(\mathcal{C},\mathcal{D})=0$ for $i\geq1$.
  }
\end{proposition}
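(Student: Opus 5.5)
The plan is to prove the cycle of implications (1)$\Rightarrow$(2)$\Rightarrow$(4)$\Rightarrow$(5)$\Rightarrow$(1), together with (1)$\Rightarrow$(3)$\Rightarrow$(4). The steps (1)$\Rightarrow$(2) and (1)$\Rightarrow$(3) are immediate from the definition of hereditary. The step (5)$\Rightarrow$(4) is trivial, and so is (5)$\Rightarrow$(1) once (4)$\Rightarrow$(2),(3) are established. So the real work is in three places: (2)$\Rightarrow$(4), (3)$\Rightarrow$(4), and (4)$\Rightarrow$(5), followed by recovering the closure properties from the Ext-vanishing.

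For (2)$\Rightarrow$(4), take $C\in\mathcal{C}$ and $D\in\mathcal{D}$. Choose an epimorphism $P\to C$ with $P$ projective. Since $\Ext^1(P,\mathcal{D})=0$, we have $P\in{}^{\perp}\mathcal{D}=\mathcal{C}$. Let $K$ be the kernel, so there is a short exact sequence $0\to K\to P\to C\to 0$, and by (2) we get $K\in\mathcal{C}$. The long exact sequence gives $\Ext^1(K,D)\to\Ext^2(C,D)\to\Ext^2(P,D)=0$. The left term vanishes, so $\Ext^2(C,D)=0$. The step (3)$\Rightarrow$(4) is dual, using an injective envelope-type monomorphism $D\to I$ with $I$ injective (hence $I\in\mathcal{D}$) and the cokernel lying in $\mathcal{D}$.

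For (4)$\Rightarrow$(5), I would argue by induction on $i$ using dimension shifting. Given $C\in\mathcal{C}$, the syzygy $K$ above satisfies $\Ext^1(K,D)\cong\Ext^2(C,D)=0$ for all $D\in\mathcal{D}$. Hence $K\in{}^{\perp}\mathcal{D}=\mathcal{C}$. Therefore $\mathcal{C}$ is closed under syzygies, and $\Ext^{i+1}(C,D)\cong\Ext^i(K,D)$ vanishes by induction.

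Finally, for (5)$\Rightarrow$(2), let $0\to C'\to C\to C''\to0$ be exact with $C,C''\in\mathcal{C}$. The long exact sequence gives $\Ext^1(C,D)\to\Ext^1(C',D)\to\Ext^2(C'',D)$, and both outer terms are zero. Hence $C'\in{}^\perp\mathcal{D}=\mathcal{C}$. Dually, for a short exact sequence $0\to D'\to D\to D''\to 0$ with $D',D\in\mathcal{D}$, we get $\Ext^1(C,D'')$ squeezed between $\Ext^1(C,D)=0$ and $\Ext^2(C,D')=0$. So $D''\in\mathcal{C}^\perp=\mathcal{D}$, which gives (3) and hence (1).

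The only delicate point is remembering that projectives lie in $\mathcal{C}$ and injectives lie in $\mathcal{D}$. This uses the standing hypothesis that $\mathcal{A}$ has enough projectives and injectives, and that point is the main thing I would take care over.
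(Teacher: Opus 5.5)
Your proof is correct and complete. The paper gives no argument of its own for this statement and only cites \cite[Theorem 1.2.10]{JR}. Your dimension-shifting proof is the standard argument behind that reference: projectives lie in $\mathcal{C}={}^{\perp}\mathcal{D}$ and injectives in $\mathcal{D}=\mathcal{C}^{\perp}$; syzygies of objects of $\mathcal{C}$ stay in $\mathcal{C}$ once $\Ext^2$ vanishes; and the long exact $\Ext$ sequences recover the two closure properties.

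One small precision on the point you flagged as delicate. Projectives lie in $\mathcal{C}$ and injectives lie in $\mathcal{D}$ in any abelian category. The standing hypothesis of enough projectives and injectives is what you actually need to produce the epimorphism $P\to C$ and the monomorphism $D\to I$.
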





	\section{Gluing cotorsion pairs along recollements}\label{Gluing cotorsion pairs along recollements}
	
\indent\indent Let $(  \mathcal{A^{'}},\mathcal{A},\mathcal{A^{''}},i^\ast,i_\ast,i^!,j_!,j^\ast,j_\ast)$
be a recollement of abelian categories. In this section, we will investigate how to glue together cotorsion pairs in
 $\mathcal{A^{'}}$ and $\mathcal{A^{''}}$ to obtain a cotorsion pair in $\mathcal{A}$.
Let's begin with the following lemma.
	
\begin{lemma}\label{3.1} For any $X\in \mathcal{A}$, $L\in \mathcal{A^{'}}$ and $Y\in \mathcal{A^{''}}$, we have

{\rm(1)} If $(\mathbb{R}_1i^!)(X)=0$, then $\Ext^1_{\mathcal{A}}(i_*L,X)\cong \Ext^1_{\mathcal{A^{'}}}(L,i^!X)$.

{\rm(2)} If $(\mathbb{L}_1i^*)(X)=0$, then $\Ext^1_{\mathcal{A^{'}}}(i^*X,L)\cong \Ext^1_{\mathcal{A}}(X,i_*L)$.

{\rm(3)} If $(\mathbb{R}_1j_*)(Y)=0$, then $\Ext^1_{\mathcal{A^{''}}}(j^*X,Y)\cong \Ext^1_{\mathcal{A}}(X,j_*Y)$.

{\rm(4)} If $(\mathbb{L}_1j_!)(Y)=0$, then $\Ext^1_{\mathcal{A}}(j_!Y,X)\cong \Ext^1_{\mathcal{A^{''}}}(Y,j^*X)$.
	\end{lemma}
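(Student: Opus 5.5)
The four statements are instances of one principle. Suppose $F\dashv G$ are adjoint functors between abelian categories with enough projectives and injectives, and the relevant one is exact. Then $\Ext^1$ is transported across the adjunction provided the first derived functor of the non-exact partner vanishes on the object in question. I would prove each item directly, using a projective or injective resolution together with the adjunction isomorphism on $\Hom$.

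For (1), take a short exact sequence $0\to X\to I\to C\to 0$ in $\mathcal{A}$ with $I$ injective. Since $i^!$ is left exact with right derived functors, and $(\mathbb{R}_1i^!)(X)=0$, applying $i^!$ gives a short exact sequence
$$0\to i^!X\to i^!I\to i^!C\to 0.$$
Here $i^!I$ is injective in $\mathcal{A}'$ by Remark~\ref{2.4}(1).

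Now apply $\Hom_{\mathcal{A}}(i_*L,-)$ to the first sequence and $\Hom_{\mathcal{A}'}(L,-)$ to the second. This yields two exact sequences
$$\Hom(i_*L,I)\to\Hom(i_*L,C)\to\Ext^1_{\mathcal{A}}(i_*L,X)\to 0,$$
$$\Hom(L,i^!I)\to\Hom(L,i^!C)\to\Ext^1_{\mathcal{A}'}(L,i^!X)\to 0.$$
The rightmost terms vanish because $I$ and $i^!I$ are injective. The adjunction $(i_*,i^!)$ gives natural isomorphisms between the first two terms that commute with the maps. Hence the cokernels agree, which gives (1).

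Item (3) is the same argument with $(j^*,j_*)$. Take an injective copresentation $0\to Y\to I\to C\to 0$ in $\mathcal{A}''$. Since $(\mathbb{R}_1j_*)(Y)=0$, applying $j_*$ keeps it exact, and $j_*I$ is injective. Compare $\Hom_{\mathcal{A}''}(j^*X,-)$ with $\Hom_{\mathcal{A}}(X,j_*-)$ via adjunction.

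Items (2) and (4) are dual. For (2), take $0\to K\to P\to X\to 0$ in $\mathcal{A}$ with $P$ projective. Since $(\mathbb{L}_1i^*)(X)=0$, applying $i^*$ gives a short exact sequence whose middle term $i^*P$ is projective. Then compare $\Hom_{\mathcal{A}'}(i^*-,L)$ with $\Hom_{\mathcal{A}}(-,i_*L)$ via the adjunction $(i^*,i_*)$. Item (4) is analogous: use a projective presentation of $Y$ in $\mathcal{A}''$, the hypothesis $(\mathbb{L}_1j_!)(Y)=0$, the fact that $j_!$ preserves projectives, and the adjunction $(j_!,j^*)$.

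The only point needing care is naturality. The adjunction isomorphisms must commute with the maps induced by the sequence morphisms, so that the comparison of cokernels is legitimate; this is standard. One should also note that the derived functors are computed from injective (respectively projective) resolutions. The vanishing of the first derived functor on the given object is exactly what makes the long exact sequence of derived functors truncate to a short exact sequence.
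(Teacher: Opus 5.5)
Your proof is correct and follows the paper's route. For (1) the paper also takes $0\to X\to I\to C\to 0$ with $I$ injective, uses $(\mathbb{R}_1i^!)(X)=0$ and the fact that $i^!$ preserves injectives, and then cites \cite[Lemma 3.7(2)]{ZP} for the cokernel comparison via the natural adjunction isomorphism that you carry out by hand. The paper leaves (2)--(4) as ``similar''; your explicit versions, which resolve $Y$ in $\mathcal{A}''$ for (3) and (4) and use that $j_*$ preserves injectives and $j_!$ preserves projectives, are exactly that dual argument.
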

	\begin{proof}
We only prove (1). The assertions (2), (3) and (4) can be proved similarly.

(1) Since $\mathcal{A}$ has enough injective objects, there is a short exact sequence $\xymatrix@C=1.5pc{ 0\ar[r] & X\ar[r] & I\ar[r]& C\ar[r]&0}$ in $\mathcal{A}$ with $I\in \mathcal{I_{\mathcal{A}}}$. Applying the left exact functor $i^!$ and using $(\mathbb{R}_1i^!)(X)=0$, we get
the following short exact sequence
\begin{center}

$\xymatrix@C=2pc{ 0\ar[r] & i^!X\ar[r] & i^!I\ar[r]& i^!C
\ar[r]&0}.$
\end{center}
Since $i^!$ preserves injective objects, it follows from \cite[Lemma 3.7 (2)]{ZP}
(which also holds for abelian categories) that $\Ext^1_{\mathcal{A}}(i_*L,X)\cong \Ext^1_{\mathcal{A^{'}}}(L,i^!X)$.
\end{proof}
	
In \cite{ZR}, Zhu-Peng-Ding construct four cotorsion pairs over triangular matrix rings.
Motivated by \cite{ZR}, and noting that a triangular matrix ring gives rise to a
special recollement of abelian categories, we construct cotorsion pairs along an arbitrary recollement of abelian categories.
Let ${\mathcal{X}}$ and ${\mathcal{Y}}$ be two classes of objects in $\mathcal{A'}$ and $\mathcal{A''}$,
respectively.
Recall that in our setting \begin{center}

$\mathcal{N}^{\mathcal{X}}_{\mathcal{Y}}=\{X\in\mathcal{A}\ |\ i^!X\in{\mathcal{X}},\ j^*X\in{\mathcal{Y}}
,\ (\mathbb{R}_1i^!)(X)=0\},$

$\mathcal{M}^{\mathcal{X}}_{\mathcal{Y}}=\{X\in\mathcal{A}\ |\ i^*X\in\mathcal{X} ,\ j^*X\in\mathcal{Y},\ (\mathbb{L}_1i^*)(X)=0\}.$

\end{center}
Inspired by \cite[Proposition 3.3]{ZR},
we set $\mathcal{C}_{\mathcal{Y}}^{\mathcal{X}}=i_*({\mathcal{X}})\cup{j_!(\mathcal{Y})}$
and
$\mathcal{D}_{\mathcal{Y}}^{\mathcal{X}}=i_*({\mathcal{X}} )\cup{j_*(\mathcal{Y})}$,
and we have the following lemma.

	\begin{lemma}\label{lem-syz}
{\rm(1)} Assume that $\mathcal{U'}$ and $\mathcal{U''}$ are classes of objects in $\mathcal{A'}$ and $\mathcal{A''}$,
respectively. If $(\mathbb{L}_1j_!)({\mathcal{U^{''}}})=0$ and $\mathcal{P_{\mathcal{A^{'}}}}\subseteq {\mathcal{U^{'}}}$,
then $(\mathcal{C}_{\mathcal{U''}}^{\mathcal{U'}})^{\perp}=\mathcal{N}_{\mathcal{(U^{''})^\perp}}^{\mathcal{(U^{'})^\perp}}$.	

{\rm(2)} Assume that $\mathcal{V'}$ and $\mathcal{V''}$ are classes of objects in $\mathcal{A'}$ and $\mathcal{A''}$, respectively.
If $(\mathbb{R}_1j_*)(\mathcal{V}^{''})=0$ and $\mathcal{I_{\mathcal{A^{'}}}}\subseteq {\mathcal{V^{'}}}$, then $^{\perp}(\mathcal{D}_{\mathcal{V''}}^{\mathcal{V'}})=\mathcal{M}_{\mathcal{^\perp(V^{''})}}^{\mathcal{^\perp(V^{'})}}$	.	
	\end{lemma}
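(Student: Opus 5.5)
We prove (1); (2) is dual, with injectives in place of projectives, $j_*$ in place of $j_!$, $i^*$ in place of $i^!$, and Lemma~\ref{3.1}(2),(3) in place of (1),(4). Since $\mathcal{C}_{\mathcal{U''}}^{\mathcal{U'}}=i_*(\mathcal{U'})\cup j_!(\mathcal{U''})$, an object $X\in\mathcal{A}$ lies in $(\mathcal{C}_{\mathcal{U''}}^{\mathcal{U'}})^{\perp}$ if and only if
$$\Ext^1_{\mathcal{A}}(i_*L,X)=0 \ \text{ for all } L\in\mathcal{U'} \quad\text{and}\quad \Ext^1_{\mathcal{A}}(j_!Y,X)=0 \ \text{ for all } Y\in\mathcal{U''}.$$
The hypothesis $(\mathbb{L}_1j_!)(\mathcal{U''})=0$ and Lemma~\ref{3.1}(4) give $\Ext^1_{\mathcal{A}}(j_!Y,X)\cong\Ext^1_{\mathcal{A''}}(Y,j^*X)$ for every $X$. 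Hence the second condition is equivalent to $j^*X\in(\mathcal{U''})^\perp$. It remains to show that the first condition is equivalent to ``$(\mathbb{R}_1i^!)(X)=0$ and $i^!X\in(\mathcal{U'})^\perp$''.

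Suppose first that $(\mathbb{R}_1i^!)(X)=0$. Then Lemma~\ref{3.1}(1) gives $\Ext^1_{\mathcal{A}}(i_*L,X)\cong\Ext^1_{\mathcal{A'}}(L,i^!X)$, so $X\in\mathcal{N}$ implies $X\in(\mathcal{C})^\perp$. The real content, and the main obstacle, is the converse: we must show that $\Ext^1_{\mathcal{A}}(i_*P,X)=0$ for all $P\in\mathcal{P}_{\mathcal{A'}}\subseteq\mathcal{U'}$ forces $(\mathbb{R}_1i^!)(X)=0$.

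For this, take $0\to X\to I\to C\to 0$ with $I$ injective. Applying $i^!$ gives the exact sequence
$$0\to i^!X\to i^!I\to i^!C\to(\mathbb{R}_1i^!)(X)\to 0.$$
Apply $\Hom_{\mathcal{A}}(i_*P,-)$ to the original sequence. Since $\Ext^1_{\mathcal{A}}(i_*P,X)=0$, the map $\Hom_{\mathcal{A}}(i_*P,I)\to\Hom_{\mathcal{A}}(i_*P,C)$ is surjective. By the adjunction $(i_*,i^!)$, this says that $\Hom_{\mathcal{A'}}(P,i^!I)\to\Hom_{\mathcal{A'}}(P,i^!C)$ is surjective for every projective $P$. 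Now choose an epimorphism $P\to i^!C$ with $P$ projective, which exists since $\mathcal{A'}$ has enough projectives. It factors through $i^!I\to i^!C$, so $i^!I\to i^!C$ is an epimorphism, and therefore $(\mathbb{R}_1i^!)(X)=0$.

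Once $\mathbb{R}_1i^!$ vanishes, Lemma~\ref{3.1}(1) translates the first condition into $i^!X\in(\mathcal{U'})^\perp$. Combining the two conditions yields $(\mathcal{C}_{\mathcal{U''}}^{\mathcal{U'}})^{\perp}=\mathcal{N}_{(\mathcal{U''})^\perp}^{(\mathcal{U'})^\perp}$.

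For (2), the dual argument runs as follows. Take a projective presentation $0\to K\to Q\to X\to 0$ and apply $i^*$. The hypothesis $\Ext^1_{\mathcal{A}}(X,i_*E)=0$ for all injective $E$ shows, via the adjunction $(i^*,i_*)$, that $i^*K\to i^*Q$ is a monomorphism. Hence $(\mathbb{L}_1i^*)(X)=0$, and we finish using Lemma~\ref{3.1}(2),(3).
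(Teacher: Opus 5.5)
Your proposal is correct and follows the paper's proof essentially step for step. In both, an epimorphism from a projective $P$ onto $i^!C$ lifts through $i^!I\to i^!C$, because $\Ext^1_{\mathcal{A}}(i_*P,X)=0$ and the adjunction $(i_*,i^!)$ give the needed surjectivity; hence $(\mathbb{R}_1i^!)(X)=0$, and Lemma~\ref{3.1}(1),(4) then settle the remaining conditions in both directions. Your dual sketch for (2), using a projective presentation of $X$, the adjunction $(i^*,i_*)$, and a monomorphism of $i^*K$ into an injective of $\mathcal{A'}$, is also sound; the paper simply omits that case as similar.
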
	

	\begin{proof}
We only prove (1), and the proof of (2) is similar.
First, we show that $(\mathcal{C}_{\mathcal{U''}}^{\mathcal{U'}})^{\perp}\subseteq \mathcal{N}_{\mathcal{(U^{''})^\perp}}^{\mathcal{(U^{'})^\perp}}$.
For any $X\in (\mathcal{C}_{\mathcal{U''}}^{\mathcal{U'}})^{\perp}=(i_*(\mathcal{U^{'}})\cup j_!({\mathcal{U{''}}}))^{\bot}$,
we have $X\in(i_*(\mathcal{U^{'}}))^{\perp} $ and $X\in(j_!({\mathcal{U{''}}}))^{\perp} $.
To show $X\in \mathcal{N}_{\mathcal{(U^{''})^\perp}}^{\mathcal{(U^{'})^\perp}}$, it suffices to prove that
$(\mathbb{R}_1i^!)(X)=0$, $i^!X\in ({\mathcal{U^{'}}})^{\bot}$ and $j^*X \in ({\mathcal{U{''}}})^{\bot}$.
Consider the short exact sequence $\xymatrix@C=1.5pc{ 0\ar[r] & X\ar[r] & I\ar[r]^{g}& C\ar[r]&0}$ with $I\in \mathcal{I_{\mathcal{A}}}.$
Since $\mathcal{A^{'}}$ have enough projective objects, there is an epimorphism $ f:P_1\rightarrow i^!C$
with $P_1\in \mathcal{P_{\mathcal{A^{'}}}}$. Since $\mathcal{P_{\mathcal{A^{'}}}}\subseteq {\mathcal{U^{'}}}$
and $X\in(i_*(\mathcal{U^{'}}))^{\perp} $, we have that
$\Ext^1_{\mathcal{A}^{'}}(i_*(P_1),X)=0$, and then we get the following short exact sequence
$$\xymatrix@C=2pc{ 0\ar[r] & \Hom _{\mathcal{A}}(i_*(P_1),X)\ar[r] &  \Hom_{\mathcal{A}}(i_*(P_1),I)\ar[r]^{g_*}&  \Hom_{\mathcal{A}}(i_*(P_1),C)\ar[r]&0}.$$
It follows from the adjunction that the sequence
$$\xymatrix@C=2pc{ 0\ar[r] & \Hom_{\mathcal{A}'}(P_1,i^!X)\ar[r] &  \Hom_{\mathcal{A}'}(P_1,i^!I)\ar[r]^{(i^!g)_*}&  \Hom_{\mathcal{A}'}(P_1,i^!C)\ar[r]&0}$$
is also exact. Therefore,
$(i^!g)_*$ is an epimorphism,
and then there exists $ u\in \Hom(P_1,i^!I) $ such that $ f=(i^!g)_*(u)=(i^!g)u$.
Since $f$ is an epimorphism, we infer that $i^!g$ is also an epimorphism,
and then $(\mathbb{R}_1i^!)(X)=0$.
For any $A\in \mathcal{U'}$, it follows from $X\in(i_*(\mathcal{U^{'}}))^{\perp} $ that $\Ext^1_{\mathcal{A}}(i_*A,X)=0$,
and then $\Ext^1_{\mathcal{A}^{'}}(A,i^!X)=0$
by Lemma~\ref{3.1} (1). Therefore, we get $i^!X\in ({\mathcal{U^{'}}})^{\bot}$.
For any $ C^{'}\in {\mathcal{U{''}}}$, it follows from $(\mathbb{L}_1j_!)({\mathcal{U^{''}}})=0$ and Lemma~\ref{3.1} (4)
that $\Ext^1_{\mathcal{A}^{''}}(C^{'},j^*X)\cong \Ext^1_{\mathcal{A}}(j_!C^{'},X)$,
which is zero since $X\in(j_!({\mathcal{U{''}}}))^{\perp} $.
Therefore, we get $j^*X \in ({\mathcal{U{''}}})^{\bot}$.

Now we show that $\mathcal{N}_{\mathcal{(U^{''})^\perp}}^{\mathcal{(U^{'})^\perp}}
\subseteq(\mathcal{C}_{\mathcal{U''}}^{\mathcal{U'}})^{\perp}$.
For any $ X\in \mathcal{N}_{\mathcal{(U^{''})^\perp}}^{\mathcal{(U^{'})^\perp}}$, we have
 $i^!X\in (\mathcal{U^{'}})^{\bot}$, $j^*X\in (\mathcal{U}^{''})^\perp $ and $(\mathbb{R}_1i^!)(X)=0$.
 For any $A\in \mathcal{U^{'}}$, it follows from Lemma~\ref{3.1} (1) that $\Ext^1_{\mathcal{A}}(i_*A,X)\cong \Ext^1_{\mathcal{A}^{'}}(A,i^!X)$,
which is zero since $i^!X\in (\mathcal{U^{'}})^{\bot}$.
Therefore, we get $X\in(i_*({\mathcal{U^{'}}}))^{\perp} $.
  For any $ C^{'}\in {\mathcal{U{''}}}$, since $(\mathbb{L}_1j_!)(\mathcal{U}^{''})=0$,
it follows from Lemma~\ref{3.1} (4) that
 $\Ext^1_{\mathcal{A}}(j_!C^{'},X) \cong\Ext^1_{\mathcal{A}^{''}}(C^{'},j^*X)$,
 which is zero since $j^*X\in (\mathcal{U}^{''})^\perp $.
Therefore, we get $X\in (j_!({\mathcal{U{''}}}))^{\bot}$, and thus
$X\in (i_*(\mathcal{U^{'}})\cup j_!({\mathcal{U{''}}}))^{\bot}=(\mathcal{C}_{\mathcal{U''}}^{\mathcal{U'}})^{\perp}$
	\end{proof}

Motivated by \cite[Theorem 3.4]{ZR}, we construct two cotorsion pairs in $\mathcal{A}$ in the next theorem.

\begin{theorem}\label{3.3}
Assume that $(\mathcal{U^{'}},\mathcal{V^{'}})$ and $(\mathcal{U^{''}},\mathcal{V{''}})$ are cotorsion pairs in $\mathcal{A^{'}}$ and $\mathcal{A^{''}}$, respectively. If $(\mathbb{L}_1j_!)(\mathcal{U^{''}})=0$ then
 $(^{\bot}{\mathcal{N}_{\mathcal{V^{''}}}^{\mathcal{V^{'}}}},
\mathcal{N}_{\mathcal{V^{''}}}^{\mathcal{V^{'}}})$ is a cotorsion pair in $\mathcal{A}$;
If $(\mathbb{R}_1j_*)(\mathcal{V}^{''})=0$ then
$(\mathcal{M}_{\mathcal{U^{''}}}^{\mathcal{U^{'}}},
({\mathcal{M}_{\mathcal{U^{''}}}^{\mathcal{U^{'}}}})^\bot)$
is a cotorsion pair in $\mathcal{A}$.
	\end{theorem}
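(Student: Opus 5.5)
The plan is to reduce both statements to Lemma~\ref{lem-syz}, using the standard fact that for any class $\mathcal{S}$ of objects in $\mathcal{A}$ the pair $({}^{\perp}(\mathcal{S}^{\perp}),\mathcal{S}^{\perp})$ is a cotorsion pair. Dually, $({}^{\perp}\mathcal{S},({}^{\perp}\mathcal{S})^{\perp})$ is also a cotorsion pair. Both facts follow from the Galois-connection identities $\mathcal{S}^{\perp}=({}^{\perp}(\mathcal{S}^{\perp}))^{\perp}$ and ${}^{\perp}\mathcal{S}={}^{\perp}(({}^{\perp}\mathcal{S})^{\perp})$. So it suffices to show that $\mathcal{N}_{\mathcal{V''}}^{\mathcal{V'}}$ is a right orthogonal class and that $\mathcal{M}_{\mathcal{U''}}^{\mathcal{U'}}$ is a left orthogonal class.

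For the first assertion, take $\mathcal{S}=\mathcal{C}_{\mathcal{U''}}^{\mathcal{U'}}=i_*(\mathcal{U'})\cup j_!(\mathcal{U''})$. The hypotheses of Lemma~\ref{lem-syz}(1) are satisfied:
\begin{itemize}
\item $(\mathbb{L}_1j_!)(\mathcal{U''})=0$ is assumed;
\item $\mathcal{P}_{\mathcal{A'}}\subseteq\mathcal{U'}$ holds because $\mathcal{U'}={}^{\perp}\mathcal{V'}$ and projectives have vanishing $\Ext^1$ into everything.
\end{itemize}
Lemma~\ref{lem-syz}(1) therefore gives $(\mathcal{C}_{\mathcal{U''}}^{\mathcal{U'}})^{\perp}=\mathcal{N}^{(\mathcal{U'})^{\perp}}_{(\mathcal{U''})^{\perp}}$. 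Since $(\mathcal{U'})^{\perp}=\mathcal{V'}$ and $(\mathcal{U''})^{\perp}=\mathcal{V''}$, this is exactly $\mathcal{N}_{\mathcal{V''}}^{\mathcal{V'}}$. Hence $\mathcal{N}_{\mathcal{V''}}^{\mathcal{V'}}=\mathcal{S}^{\perp}$, and by the general fact $({}^{\perp}\mathcal{N}_{\mathcal{V''}}^{\mathcal{V'}},\mathcal{N}_{\mathcal{V''}}^{\mathcal{V'}})$ is a cotorsion pair.

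For the second assertion, the argument is dual, using $\mathcal{S}=\mathcal{D}_{\mathcal{V''}}^{\mathcal{V'}}=i_*(\mathcal{V'})\cup j_*(\mathcal{V''})$. Here $\mathcal{I}_{\mathcal{A'}}\subseteq\mathcal{V'}=(\mathcal{U'})^{\perp}$, and $(\mathbb{R}_1j_*)(\mathcal{V''})=0$ is assumed, so Lemma~\ref{lem-syz}(2) applies. It gives ${}^{\perp}\mathcal{S}=\mathcal{M}^{{}^{\perp}\mathcal{V'}}_{{}^{\perp}\mathcal{V''}}=\mathcal{M}_{\mathcal{U''}}^{\mathcal{U'}}$. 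The dual general fact then shows that $(\mathcal{M}_{\mathcal{U''}}^{\mathcal{U'}},(\mathcal{M}_{\mathcal{U''}}^{\mathcal{U'}})^{\perp})$ is a cotorsion pair.

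No step looks like a real obstacle, since Lemma~\ref{lem-syz} carries the substance. The only points needing care are:
\begin{itemize}
\item matching the indices, i.e.\ using $(\mathcal{U'})^{\perp}=\mathcal{V'}$ and ${}^{\perp}\mathcal{V'}=\mathcal{U'}$ from the cotorsion pair axioms;
\item the one-line verification of the Galois identity: $\mathcal{S}\subseteq{}^{\perp}(\mathcal{S}^{\perp})$ gives $({}^{\perp}(\mathcal{S}^{\perp}))^{\perp}\subseteq\mathcal{S}^{\perp}$, and the reverse inclusion holds by definition.
\end{itemize}
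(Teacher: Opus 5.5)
Your proposal is correct and follows the paper's proof. The paper likewise observes that $({}^{\perp}(\mathcal{S}^{\perp}),\mathcal{S}^{\perp})$ and $({}^{\perp}\mathcal{S},({}^{\perp}\mathcal{S})^{\perp})$ are always cotorsion pairs, checks $\mathcal{P}_{\mathcal{A'}}\subseteq\mathcal{U'}$ and $\mathcal{I}_{\mathcal{A'}}\subseteq\mathcal{V'}$, and applies Lemma~\ref{lem-syz} to $\mathcal{C}_{\mathcal{U''}}^{\mathcal{U'}}$ and $\mathcal{D}_{\mathcal{V''}}^{\mathcal{V'}}$; you additionally supply the one-line Galois-connection verification that the paper leaves as ``easy to check''.
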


\begin{proof}
Let $\mathcal{C}$ be a class of objects in $\mathcal{A}$.
It is easy to check that both $(^\perp(\mathcal{C}^\perp),\mathcal{C}^\perp)$
and $(^\perp \mathcal{C}, (^\perp \mathcal{C} )^\perp   )$ are cotorsion pairs in $\mathcal{A}$.
Therefore, we have that both
$(^{\bot}{((\mathcal{C}_{\mathcal{U''}}^{\mathcal{U'}})^{\perp})},
(\mathcal{C}_{\mathcal{U''}}^{\mathcal{U'}})^{\perp})$ and
$(^{\perp}(\mathcal{D}_{\mathcal{V''}}^{\mathcal{V'}}),
(^{\perp}(\mathcal{D}_{\mathcal{V''}}^{\mathcal{V'}}))^\bot)$ are cotorsion pairs.
Since $(\mathcal{U^{'}},\mathcal{V^{'}})$ and $(\mathcal{U^{''}},\mathcal{V{''}})$ are cotorsion pairs,
we get $\mathcal{P_{\mathcal{A^{'}}}}\subseteq {\mathcal{U^{'}}}$ and $\mathcal{I_{\mathcal{A^{'}}}}\subseteq {\mathcal{V^{'}}}$
by definition.
If $(\mathbb{L}_1j_!)(\mathcal{U^{''}})=0$, then we get
$(\mathcal{C}_{\mathcal{U''}}^{\mathcal{U'}})^{\perp}=
\mathcal{N}_{\mathcal{V^{''}}}^{\mathcal{V^{'}}}$ by Lemma~\ref{lem-syz} (1), and thus
$(^{\bot}{\mathcal{N}_{\mathcal{V^{''}}}^{\mathcal{V^{'}}}},
\mathcal{N}_{\mathcal{V^{''}}}^{\mathcal{V^{'}}})$ is a cotorsion pair in $\mathcal{A}$.
If $(\mathbb{R}_1j_*)(\mathcal{V}^{''})=0$, then we get
$^{\perp}(\mathcal{D}_{\mathcal{V''}}^{\mathcal{V'}})=\mathcal{M}_{\mathcal{U^{''}}}^{\mathcal{U^{'}}}$
by Lemma~\ref{lem-syz} (2),
and thus
$(\mathcal{M}_{\mathcal{U^{''}}}^{\mathcal{U^{'}}},
({\mathcal{M}_{\mathcal{U^{''}}}^{\mathcal{U^{'}}}})^\bot)$
is a cotorsion pair in $\mathcal{A}$.
\end{proof}

Next, we will discuss when
$(^{\bot}{\mathcal{N}_{\mathcal{V^{''}}}^{\mathcal{V^{'}}}},
\mathcal{N}_{\mathcal{V^{''}}}^{\mathcal{V^{'}}})=(\mathcal{M}_{\mathcal{U^{''}}}^{\mathcal{U^{'}}},
({\mathcal{M}_{\mathcal{U^{''}}}^{\mathcal{U^{'}}}})^\bot)$.
We need the following lemmas.
	
\begin{lemma}\label{3.4} Let $X$ be an object in $\mathcal{A}$.

			{\rm(1)} If the unit $X\overset{\delta_X}{\rightarrow} j_*j^*X$ is an epimorphism then $(\mathbb{R}_1i^!)(X)=0$,
and the converse also holds if $\delta_I$ is an epimorphism for any $ I\in \mathcal{I_{\mathcal{A}}}$.
			
	{\rm(2)} If the counit $j_!j^*X\overset{\varepsilon_X}{\rightarrow} X$ is a monomorphism then $(\mathbb{L}_1i^*)(X)=0$,
and the converse also holds if $\varepsilon_P$ is a monomorphism for any $ P\in \mathcal{P_{\mathcal{A}}}$.
	\end{lemma}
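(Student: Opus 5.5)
We prove (2); part (1) is formally dual: replace projectives by injectives, $\varepsilon$ by $\delta$, $i^*$ by $i^!$, and $j_!$ by $j_*$. The idea is to compare $X$ with a projective cover $P$ of it and to identify $(\mathbb{L}_1 i^*)(X)$ with a kernel computed from the exact sequences of Remark~\ref{2.4}(3).

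Choose a short exact sequence $0\to K\to P\overset{p}{\to} X\to 0$ with $P\in\mathcal{P}_{\mathcal{A}}$. Since $i^*$ is right exact and preserves projectives, we get an exact sequence
$$0\to(\mathbb{L}_1i^*)(X)\to i^*K\to i^*P\to i^*X\to 0.$$
Hence $(\mathbb{L}_1i^*)(X)=0$ if and only if $i^*K\to i^*P$ is a monomorphism.

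Apply naturality of $\varepsilon$ to the sequence $0\to K\to P\to X\to 0$. Since $j^*$ is exact and $j_!$ is right exact, the row $j_!j^*K\to j_!j^*P\to j_!j^*X\to 0$ is exact. Remark~\ref{2.4}(3) gives the exact sequences $j_!j^*(-)\overset{\varepsilon}{\to}(-)\to i_*i^*(-)\to 0$, so we have a commutative diagram whose columns are $\varepsilon_K,\varepsilon_P,\varepsilon_X$ and whose cokernels are $i_*i^*K, i_*i^*P, i_*i^*X$. Let $\Ker\varepsilon_Y=i_*M'_Y$.

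\emph{Forward direction.} Assume $\varepsilon_X$ is mono. We show $i_*i^*K\to i_*i^*P$ is mono; since $i_*$ is exact and faithful, this gives $i^*K\to i^*P$ mono. Take the image factorization $\Im\varepsilon_Y\hookrightarrow Y\to i_*i^*Y$. This yields short exact sequences $0\to\Im\varepsilon_Y\to Y\to i_*i^*Y\to 0$ for $Y=K,P,X$. The snake lemma applied to the maps from the $K$-sequence into the $P$-sequence identifies the kernel of $i_*i^*K\to i_*i^*P$ with a subquotient built from $\Im\varepsilon_P\cap K$ modulo $\Im\varepsilon_K$.

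It therefore suffices to show $\Im\varepsilon_P\cap K=\Im\varepsilon_K$. One inclusion is clear. For the other, take a subobject of $\Im\varepsilon_P$ lying in $K$; its image in $X$ is zero. Because $\varepsilon_X$ is mono, the composite $j_!j^*P\to\Im\varepsilon_P\to X$, which equals $\varepsilon_X\circ j_!j^*p$, has kernel $\Ker(j_!j^*p)$. By right exactness, $\Ker(j_!j^*p)$ equals the image of $j_!j^*K$. Hence $\Im\varepsilon_P\cap K$ is the image of $j_!j^*K$ under $\varepsilon_P$, which is $\Im\varepsilon_K$. This direction does not use condition (P).

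\emph{Converse.} Assume $\varepsilon_P$ is mono for all projectives and $(\mathbb{L}_1i^*)(X)=0$. Then $\Im\varepsilon_P\cong j_!j^*P$. Apply the snake lemma to the map from $0\to\Im\varepsilon_K\to K\to i_*i^*K\to0$ into the $P$-sequence. Injectivity of $i_*i^*K\to i_*i^*P$, together with injectivity of $K\to P$, shows that $K\cap\Im\varepsilon_P=\Im\varepsilon_K$. Now compute $\Ker\varepsilon_X$ by diagram chase in the square formed by $j_!j^*P\to j_!j^*X$ and $P\to X$. Let $z\in j_!j^*X$ with $\varepsilon_X(z)=0$, and lift it to $y\in j_!j^*P$. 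Then $\varepsilon_P(y)\in K\cap\Im\varepsilon_P=\Im\varepsilon_K$, so $\varepsilon_P(y)=\varepsilon_P(\text{image of some }k\in j_!j^*K)$. Since $\varepsilon_P$ is mono, $y$ comes from $j_!j^*K$, and therefore $z=0$. This chase is legitimate in an abelian category via Freyd--Mitchell, or can be phrased with pullbacks.

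The main obstacle is the correct bookkeeping of the identity $K\cap\Im\varepsilon_P=\Im\varepsilon_K$ in both directions. This identity is the real content: each direction reduces to it, one via the monicity of $\varepsilon_X$ and the other via the monicity of $\varepsilon_P$.
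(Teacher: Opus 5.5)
Your argument is correct and takes essentially the paper's route: a projective (dually, injective) presentation $0\to K\to P\to X\to 0$, the naturality diagram for $\varepsilon$ with rows $j_!j^*(-)$, the presentation, and $i_*i^*(-)$, and then the snake lemma. The only difference is that the paper (which writes out the dual case (1)) applies the snake lemma directly to the columns $\varepsilon_K,\varepsilon_P,\varepsilon_X$, obtaining the exact sequence $\Ker\varepsilon_P\to\Ker\varepsilon_X\to i_*i^*K\to i_*i^*P\to i_*i^*X\to 0$, from which each direction takes one line; your identity $K\cap\Im\varepsilon_P=\Im\varepsilon_K$ and the element chase are a by-hand unpacking of the connecting morphism in that sequence.
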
	
	\begin{proof}
We only prove (1), and the proof of (2) is similar.
Consider the short exact sequence $\xymatrix@C=1.5pc{ 0\ar[r] & X\ar[r]^{f} & I\ar[r]^{g}& C\ar[r]&0}$ in $\mathcal{A}$ with $I\in \mathcal{I_{\mathcal{A}}}$. By Remark~\ref{2.4} (3), we get the following exact commutative diagram:
$$\xymatrix@C=2pc{ &0\ar[d]&0\ar[d]&0 \ar[d]&\\ 0\ar[r] & i_*i^!X   \ar[r]^{i_*i^!f} \ar[d] & i_*i^!I
			\ar[r]^{i_*i^!g} \ar[d] &  i_*i^!C\ar[d] &  \\
			0\ar[r]& X \ar[r]^{f}\ar[d]^{\delta_X}& I\ar[r]^{g}\ar[d]^{\delta_I} & C \ar[r]\ar[d]^{\delta_C}  & 0\\0\ar[r]&j_*j^*X\ar[r]&j_*j^*I\ar[r]&j_*j^*C.}$$
If $\delta_X$ is an epimorphism, then it follows from the Snake lemma that the sequence
$$\xymatrix@C=2pc{ 0\ar[r] & i_*i^!X\ar[r]^{i_*i^!f} & i_*i^!I
			\ar[r]^{i_*i^!g} &  i_*i^!C \ar[r]& 0}$$ is exact.
Applying $i^*$, we obtain the following exact sequence $$\xymatrix@C=2pc{i^* i_*i^!X\ar[r]^{i^*i_*i^!f} & i^*i_*i^!I
			\ar[r]^{i^*i_*i^!g} & i^* i_*i^!C \ar[r]& 0}.$$ Since $i^* i_* \cong 1 _{\mathcal{A'}}$, we get an
exact sequence $\xymatrix@C=2pc{ i^!X\ar[r]^{i^!f} & i^!I
			\ar[r]^{i^!g} &i^!C \ar[r]& 0}$,
and then $(\mathbb{R}_1i^!)(X)=0$.

Conversely, if $(\mathbb{R}_1i^!)(X)=0$ then the sequence $$\xymatrix@C=2pc{0\ar[r]& i^!X\ar[r]^{i^!f} & i^!I
			\ar[r]^{i^!g} &i^!C \ar[r]& 0}$$ is exact. Applying $i_*$, we get
a short exact sequence $$\xymatrix@C=2pc{0\ar[r]& i_*i^!X\ar[r] & i_*i^!I
			\ar[r]^{i_*i^!g} &i_*i^!C \ar[r]& 0}.$$
Therefore, $g$ induces an epimorphism between $\Ker (\delta_I)$ and $\Ker (\delta_C)$.
On the other hand, it follows from the Snake lemma that the sequence
$$\Ker (\delta_I)\rightarrow \Ker (\delta_C)\rightarrow \Coker \delta_X \rightarrow \Coker \delta_I $$
is exact.
Since $\delta_I$ is an epimorphism, we get $\Coker\delta_I=0$, which implies that $\Coker \delta_X=0$.
Therefore, $\delta_X$ is an epimorphism.
\end{proof}






The following observation is crucial for what follows.
\begin{lemma}\label{P}
$j_!j^*P\overset{\varepsilon_P}{\rightarrow} P$ is a monomorphism for any $ P\in \mathcal{P_{\mathcal{A}}}$ if and only if $I\overset{\delta_I}{\rightarrow}j_*j^*I$ is an epimorphism for any $ I\in \mathcal{I_{\mathcal{A}}}$.
\end{lemma}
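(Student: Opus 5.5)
The statement is self-dual: passing to opposite categories swaps projectives and injectives, the pair $(j_!,\varepsilon)$ with $(j_*,\delta)$, and $i^*$ with $i^!$. So I will not compare $\varepsilon_P$ and $\delta_I$ directly. Instead I plan to show that each condition is equivalent to one intrinsic property $(\ast)$ of the functor $i_*$ that does not mention projectives or injectives of $\mathcal{A}$ and is visibly self-dual. It then suffices to prove ``$\varepsilon_P$ mono for all $P\in\mathcal{P}_{\mathcal{A}}$ $\iff(\ast)$''; the other equivalence follows by applying this to the opposite recollement.

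First I analyse $\varepsilon_P$ using Remark~\ref{2.4}(3). Write $K=\Im\varepsilon_P$. This gives two short exact sequences
$$0\to K\to P\to i_*i^*P\to 0,\qquad 0\to i_*M'\to j_!j^*P\to K\to 0,$$
and $\varepsilon_P$ is mono iff $M'=0$. Applying $i^*$ and using $i^*j_!=0$ (Remark~\ref{2.4}(4)), $i^*i_*\cong 1$ and $\mathbb{L}_ni^*(P)=0$, I expect:
\begin{itemize}
\item $i^*K=0$;
\item $\mathbb{L}_1i^*(K)$ surjects onto $M'$;
\item $\mathbb{L}_2i^*(i_*i^*P)\cong\mathbb{L}_1i^*(K)$.
\end{itemize}
These computations suggest the candidate
$$(\ast)\quad \mathbb{L}_2i^*(i_*L)=0\ \text{ for all } L\in\mathcal{A}',$$
with dual form $\mathbb{R}_2i^!(i_*L)=0$ for all $L\in\mathcal{A}'$.

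To make $(\ast)$ self-dual I would rephrase it via Ext, as in Lemma~\ref{3.1}. Compute $\Ext^n_{\mathcal{A}}(i_*L,i_*E)$ for $E\in\mathcal{I}_{\mathcal{A}'}$ from a projective resolution of $i_*L$, using the adjunction $(i^*,i_*)$. This should show that $\mathbb{L}_ni^*(i_*L)=0$ for $n=1,2$ for all $L$ is equivalent to $i_*$ inducing isomorphisms
$$\Ext^n_{\mathcal{A}'}(L,L')\cong\Ext^n_{\mathcal{A}}(i_*L,i_*L')\quad\text{for } n\le 2.$$
The same computation with $i^!$ and injective resolutions gives the same Ext-condition. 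This would make the invariant self-dual, and the degree-one part is probably automatic or handled separately.

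Then I prove the two directions for $\varepsilon$. For ``$\Rightarrow$'', take any $L$ and an epimorphism $P\to i_*L$ with kernel $K$. Since $j^*i_*L=0$, we get $j_!j^*K\cong j_!j^*P$ and $\varepsilon_P=\iota\varepsilon_K$. Hence $\varepsilon_K$ is mono, so $\mathbb{L}_1i^*(K)=0$ by Lemma~\ref{3.4}(2), and dimension shifting gives $\mathbb{L}_2i^*(i_*L)=0$. For ``$\Leftarrow$'', use the displayed sequences with $L=i^*P$: $(\ast)$ forces $\mathbb{L}_1i^*(K)=0$ and hence $M'=0$.

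The main obstacle is pinning down the exact invariant. The computation above shows $M'$ is only a \emph{quotient} of $\mathbb{L}_1i^*(K)$, and $\mathbb{L}_1i^*(j_!j^*P)$ may not vanish, so the ``$\Rightarrow$'' step may only control a weaker quantity. If $(\ast)$ turns out too strong, I would instead take the invariant to be the vanishing of the connecting map $\mathbb{L}_1i^*(K)\to i^*i_*M'$ for all such $K$. Equivalently, I would try to express $\Ker\varepsilon_P$ through $\Hom_{\mathcal{A}'}(M',E)\hookrightarrow\Ext^1_{\mathcal{A}}(K,i_*E)$ for $E\in\mathcal{I}_{\mathcal{A}'}$, and then dualise that Hom/Ext formulation rather than the derived-functor one.
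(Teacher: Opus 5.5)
Your argument is correct, and it takes a genuinely different route from the paper's.

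\textbf{The paper's route.} The paper argues directly with Hom sets. For injective $I$ it picks an epimorphism $\pi\colon P\to j_*j^*I$ with $P$ projective. It uses that $(\delta_I)_*$ on $\Hom_{\mathcal{A}}(j_!j^*P,-)$ and $(\varepsilon_P)^*$ on $\Hom_{\mathcal{A}}(-,j_*j^*I)$ are bijections, since both sides reduce to $\Hom_{\mathcal{A}''}(j^*P,j^*I)$. It writes $\pi\varepsilon_P=\delta_I f$ and extends $f$ along the monomorphism $\varepsilon_P$ into the injective $I$ as $f=g\varepsilon_P$. This gives $\pi=\delta_I g$, so $\delta_I$ is epi, and the converse is dual.

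\textbf{Your route.} You instead prove that condition (P) is equivalent to $(\ast)$: $(\mathbb{L}_2i^*)(i_*L)=0$ for all $L\in\mathcal{A}'$. Both directions you give are sound.
\begin{itemize}
\item For (P)$\Rightarrow(\ast)$, naturality gives $\varepsilon_P=\iota\,\varepsilon_K\,(j_!j^*\iota)^{-1}$. Combined with the unconditional half of Lemma~\ref{3.4}(2) and dimension shifting, this yields $(\ast)$.
\item For $(\ast)\Rightarrow$(P), the surjection $(\mathbb{L}_1i^*)(K)\twoheadrightarrow M'$ is enough.
\end{itemize}
The passage to the dual condition $(\mathbb{R}_2i^!)(i_*L)=0$ also works, via
\[\Ext^n_{\mathcal{A}}(i_*L,i_*E)\cong\Hom_{\mathcal{A}'}((\mathbb{L}_ni^*)(i_*L),E)\quad(E\in\mathcal{I}_{\mathcal{A}'})\]
and its dual, together with dimension shifting. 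The degree-one input you were unsure about is indeed automatic. We have $(\mathbb{L}_1i^*)i_*=0$, as used in Lemma~\ref{chang}. Equivalently, $\Ext^1_{\mathcal{A}'}\cong\Ext^1_{\mathcal{A}}$ via $i_*$, because $\Im i_*=\Ker j^*$ is closed under extensions; this fact is used in the proof of Theorem~\ref{3.6}.

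\textbf{Your closing worry is unfounded.} Apply the first half of Lemma~\ref{3.4}(2) to $X=j_!Y$, whose counit $\varepsilon_{j_!Y}$ is an isomorphism. This gives $(\mathbb{L}_1i^*)(j_!Y)=0$ for every $Y\in\mathcal{A}''$. Hence in your sequences $M'\cong(\mathbb{L}_1i^*)(K)\cong(\mathbb{L}_2i^*)(i_*i^*P)$. So $(\ast)$ is exactly the right invariant, and no fallback is needed. In any case, your proof of (P)$\Rightarrow(\ast)$ never used this.

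\textbf{What each approach buys.} The paper's proof is shorter and purely Hom-theoretic. It needs no derived functors and no degree-one facts. Yours costs a little more machinery but gives more. It shows that (P) says exactly that $i_*$ induces isomorphisms $\Ext^n_{\mathcal{A}'}\cong\Ext^n_{\mathcal{A}}$ for $n\le 2$, i.e.\ $i_*$ is a $2$-homological embedding in Psaroudakis' sense. This makes the self-duality transparent. It also gives Proposition~\ref{pro-exact-recoll} at once: exactness of $i^*$ or of $i^!$ kills $\mathbb{L}_2i^*$ or $\mathbb{R}_2i^!$, without appeal to \cite{HJS}.
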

\begin{proof}
Assume that $j_!j^*P\overset{\varepsilon_P}{\rightarrow} P$ is a monomorphism for any $ P\in \mathcal{P_{\mathcal{A}}}$.
For any $ I\in \mathcal{I_{\mathcal{A}}}$, let $\pi:P\twoheadrightarrow j_*j^*I$ be the projective cover of $j_*j^*I$.
By the adjunction, we obtain the following commutative diagram:
 \begin{center}
 $\xymatrix{\Hom _{\mathcal{A}}(j_!j^*P,I)\ar[rr]^{(\delta_I)_*}\ar[d]^{\cong}&&\Hom_{\mathcal{A}''}(j_!j^*P,j_*j^*I)\ar[d]^{\cong}\\
 \Hom_{\mathcal{A}}(j^*P,j^*I)\ar[rr]^{(j^*\delta_I)_*}&&\Hom_{\mathcal{A}''}(j^*P,j^*j_*j^*I).}$
 \end{center}
  Let $\xi:j^*j_*\rightarrow 1_{\mathcal{A''}}$ be counit of the adjoint pair $(j^*,j_*)$.
 Since $\delta : 1_{\mathcal{A}} \rightarrow j_*j^*$ is the unit,
 we have $(\xi_{j^*I})\circ ( j^*\delta_I)=1 _{j^*I}$,
and then $(\xi_{j^*I}\circ j^*\delta_I)_*=1_{\Hom_{\mathcal{A}}(j^*P,j^*I)}$.
Therefore, we get $(\xi_{j^*I})_*\circ(j^*\delta_I)_*=1_{\Hom_{\mathcal{A}}(j^*P,j^*I)}$.
On the other hand, it follows from Remark~\ref{2.4} (2) that $\xi$ is an natural isomorphism,
and then $(\xi_{j^*I})_*$ is an isomorphism. Therefore, $(j^*\delta_I)_*$ is an isomorphism, and
so is $(\delta_I)_*$ by the above commutative diagram.
Consider the homomorphism $\pi\varepsilon_P\in\Hom_{\mathcal{A}}(j_!j^*P,j_*j^*I)$.
Since $(\delta_I)_*$ is an isomorphism,
there exists $f\in\Hom_{\mathcal{A}}(j_!j^*P,I)$ such that $(\delta_I)_*(f)=\pi\varepsilon_P$.
Therefore, we get $\delta_I f=\pi\varepsilon_P$.
Consider the following commutative diagram:
 \begin{center}
 $\xymatrix{j_!j^*P\ar@{>->}[r]^{\varepsilon_P}\ar[d]^{f}&P\ar[d]^{\pi}\ar@{-->}[ld]|{g}\\I\ar[r]|{\delta_I}&j_*j^*I.}$
 \end{center}
 Since $ I\in \mathcal{I_{\mathcal{A}}}$, there exists $g\in\Hom_{\mathcal{A}}(P,I)$ such that $f=g\varepsilon_P$. Then
  it follows that $\pi\varepsilon_P=\delta_I f=\delta_I g\varepsilon_P$, that is, $(\pi-\delta_I g)\varepsilon_P=0$.
 On the other hand, we have $\Hom_{\mathcal{A''}}(j^*P,j^*I)\cong\Hom_{\mathcal{A}}(P,j_*j^*I)\overset{(\varepsilon_P)^*}{\longrightarrow} \Hom_{\mathcal{A}}(j_!j^*P,j_*j^*I)\cong\Hom_{\mathcal{A''}}(j^*P,j^*I)$ by the adjunction.
Just as we proved that $(\delta_I)_*$ is an isomorphism, we can similarly show that
 $(\varepsilon_P)^*$
  is also an isomorphism,
 and then $\pi-\delta_I g=0$
 since $(\varepsilon_P)^*(\pi-\delta_I g)=(\pi-\delta_I g)\varepsilon_P=0$. Therefore,
 we obtain $\pi=\delta_I g$. Note that $\pi$ is an epimorphism, and then so is $\delta_I$.
The converse can be proved dually.
\end{proof}	

For convenience, we say that a recollement of abelian categories $(\mathcal{A^{'}}, \mathcal{A}, \mathcal{A^{''}},$
\linebreak
$ i^\ast, i_\ast, i^!, j_!, j^\ast, j_\ast)$
{\it satisfies the condition (P)} if
$\varepsilon_P$ is a monomorphism for any $P\in \mathcal{P}_{\mathcal{A}}$,
where $\varepsilon :j_!j^*\rightarrow 1_{\mathcal{A}}$
is the counit of the adjoint pair $(j_!, j^*)$.
In order to give a sufficient condition for $(^{\bot}{\mathcal{N}_{\mathcal{V^{''}}}^{\mathcal{V^{'}}}},
\mathcal{N}_{\mathcal{V^{''}}}^{\mathcal{V^{'}}})=(\mathcal{M}_{\mathcal{U^{''}}}^{\mathcal{U^{'}}},
({\mathcal{M}_{\mathcal{U^{''}}}^{\mathcal{U^{'}}}})^\bot)$, we need the following lemma.

 \begin{lemma}\label{chang}
 Let $\mathcal{U'}$, $\mathcal{V'}$, $\mathcal{U''}$ and $\mathcal{V''}$ be
 classes of objects in $\mathcal{A'}$ and $\mathcal{A''}$,
 respectively. Assume that all of $\mathcal{U'}$, $\mathcal{V'}$, $\mathcal{U''}$ and $\mathcal{V''}$
 contain the zero object. Then $i_*\mathcal{U'}\subseteq \mathcal{M}_{\mathcal{U^{''}}}^{\mathcal{U^{'}}}$, $i_*\mathcal{V'}\subseteq \mathcal{N}_{\mathcal{V^{''}}}^{\mathcal{V^{'}}}$, $j_!\mathcal{U''}\subseteq \mathcal{M}_{\mathcal{U^{''}}}^{\mathcal{U^{'}}}$ and $j_*\mathcal{V''}\subseteq \mathcal{N}_{\mathcal{V^{''}}}^{\mathcal{V^{'}}}$.
 \end{lemma}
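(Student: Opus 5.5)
We check the four inclusions directly from the definitions of $\mathcal{M}^{\mathcal{X}}_{\mathcal{Y}}$ and $\mathcal{N}^{\mathcal{X}}_{\mathcal{Y}}$. For each, we compute $i^*$ (or $i^!$) and $j^*$ of the object in question and show that the relevant derived functor $\mathbb{L}_1i^*$ (or $\mathbb{R}_1i^!$) vanishes on it. By symmetry (passing to opposite categories exchanges $i^*\leftrightarrow i^!$, $j_!\leftrightarrow j_*$ and $\mathbb{L}_1\leftrightarrow \mathbb{R}_1$), it suffices to treat $i_*\mathcal{U'}\subseteq \mathcal{M}_{\mathcal{U''}}^{\mathcal{U'}}$ and $j_!\mathcal{U''}\subseteq \mathcal{M}_{\mathcal{U''}}^{\mathcal{U'}}$.

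\emph{Step 1: $i_*\mathcal{U'}$.} For $A\in\mathcal{U'}$, Remark~\ref{2.4}(2) gives $i^*i_*A\cong A\in\mathcal{U'}$, and $j^*i_*A=0\in\mathcal{U''}$ because $\Im i_*=\Ker j^*$ and $\mathcal{U''}$ contains zero. For the vanishing of $(\mathbb{L}_1i^*)(i_*A)$ we use Lemma~\ref{3.4}(2): since $j^*i_*A=0$, we have $j_!j^*i_*A=0$, so $\varepsilon_{i_*A}$ is trivially a monomorphism, and hence $(\mathbb{L}_1i^*)(i_*A)=0$. (The dual argument with Lemma~\ref{3.4}(1) shows that $\delta_{i_*B}\colon i_*B\to 0$ is an epimorphism, so $(\mathbb{R}_1 i^!)(i_*B)=0$ for $B\in\mathcal{V'}$.)

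\emph{Step 2: $j_!\mathcal{U''}$.} For $C\in\mathcal{U''}$, we have $j^*j_!C\cong C\in\mathcal{U''}$ and $i^*j_!C=0\in\mathcal{U'}$ by Remark~\ref{2.4}(4). Again we invoke Lemma~\ref{3.4}(2): $\varepsilon_{j_!C}\colon j_!j^*j_!C\to j_!C$ is an isomorphism, by the triangle identity $\varepsilon_{j_!C}\circ j_!(\eta_C)=1_{j_!C}$ together with the fact that the unit $\eta\colon 1\to j^*j_!$ is an isomorphism because $j_!$ is fully faithful. In particular $\varepsilon_{j_!C}$ is a monomorphism, so $(\mathbb{L}_1i^*)(j_!C)=0$. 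Dually, $\delta_{j_*D}$ is an isomorphism for $D\in\mathcal{V''}$, which gives $(\mathbb{R}_1 i^!)(j_*D)=0$, together with $i^!j_*D=0\in\mathcal{V'}$ and $j^*j_*D\cong D$.

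\emph{Main obstacle.} The only point needing care is the vanishing of the first derived functors. This is handled entirely by the ``easy direction'' of Lemma~\ref{3.4}, which requires no hypothesis such as condition (P); the rest is bookkeeping with Remark~\ref{2.4} and the assumption that each class contains $0$.
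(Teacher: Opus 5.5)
Your proof is correct, but it justifies the derived-functor vanishing differently from the paper.

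The paper checks $i^*i_*C\cong C$ and $j^*i_*C=0$ exactly as you do. It then gets $(\mathbb{L}_1i^*)(i_*C)=0$ by citing the identity $(\mathbb{L}_1i^*)i_*=0$ from \cite[Proposition 4.7]{FV}, and settles the other three inclusions with ``similarly''.

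You instead derive every vanishing statement from the easy direction of Lemma~\ref{3.4}: if $\varepsilon_X$ is a monomorphism then $(\mathbb{L}_1i^*)(X)=0$, and dually for $\delta_X$. Its snake-lemma proof does not use condition (P), so nothing circular is involved. The two observations you need are these:
\begin{itemize}
\item $\varepsilon_{i_*A}$ has zero domain, so it is trivially a monomorphism.
\item $\varepsilon_{j_!C}$ is inverse to $j_!(\eta_C)$ by the triangle identity, and $\eta$ is invertible because $j_!$ is fully faithful.
\end{itemize}
The cases $\delta_{i_*B}$ and $\delta_{j_*D}$ are dual.

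Your route stays inside the paper and handles all four objects by one mechanism. In particular, it gives an explicit argument for $(\mathbb{L}_1i^*)(j_!C)=0$ and $(\mathbb{R}_1i^!)(j_*D)=0$. These are not instances of the cited identity about $i_*$, and the paper's ``similarly'' leaves them to the reader. The paper's version is shorter on the page but depends on an external reference.
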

 \begin{proof}
  For any $C\in\mathcal{U'}$, to show $i_*C\in \mathcal{M}_{\mathcal{U^{''}}}^{\mathcal{U^{'}}}$,
 it suffices to prove that $i^*(i_*C)\in \mathcal{U'}$, $j^*(i_*C)\in \mathcal{U''}$ and $(\mathbb{L}_1i^*)(i_*C)=0$.
By the property of recollement, we have $i^*i_*C\cong C\in\mathcal{U'}$ and $j^*i_*C=0\in\mathcal{U''}$.
Moreover, it follows \cite[Proposition 4.7]{FV} that
$(\mathbb{L}_1i^*)i_*=0$, and thus $i_*\mathcal{U'}\subseteq \mathcal{M}_{\mathcal{U^{''}}}^{\mathcal{U^{'}}}$. The other
inclusions can be proved similarly.
 \end{proof}

Now we are ready to give a sufficient condition under which $(^{\bot}{\mathcal{N}_{\mathcal{V^{''}}}^{\mathcal{V^{'}}}},
\mathcal{N}_{\mathcal{V^{''}}}^{\mathcal{V^{'}}})=(\mathcal{M}_{\mathcal{U^{''}}}^{\mathcal{U^{'}}},
({\mathcal{M}_{\mathcal{U^{''}}}^{\mathcal{U^{'}}}})^\bot)$. The following proposition is inspired by
\cite[Proposition 3.7]{ZR}.

\begin{proposition}\label{prop-two-contor-equal}
Let $(\mathcal{A^{'}}, \mathcal{A}, \mathcal{A^{''}}, i^\ast, i_\ast, i^!, j_!, j^\ast, j_\ast)$
be a recollement satisfying condition (P).
Assume that $(\mathcal{U^{'}},\mathcal{V^{'}})$ and $(\mathcal{U^{''}},\mathcal{V{''}})$ are
cotorsion pairs in $\mathcal{A^{'}}$ and $\mathcal{A^{''}}$, respectively.
If $(\mathbb{L}_1j_!)(\mathcal{U^{''}})=0$ and
$(\mathbb{R}_1j_*)(\mathcal{V}^{''})=0$,
then $(^{\bot}{\mathcal{N}_{\mathcal{V^{''}}}^{\mathcal{V^{'}}}},
\mathcal{N}_{\mathcal{V^{''}}}^{\mathcal{V^{'}}})=(\mathcal{M}_{\mathcal{U^{''}}}^{\mathcal{U^{'}}},
({\mathcal{M}_{\mathcal{U^{''}}}^{\mathcal{U^{'}}}})^\bot)$.
\end{proposition}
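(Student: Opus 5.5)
The plan is to reduce everything to the single orthogonality statement $\Ext^1_{\mathcal{A}}(X,Y)=0$ for all $X\in\mathcal{M}_{\mathcal{U^{''}}}^{\mathcal{U^{'}}}$ and $Y\in\mathcal{N}_{\mathcal{V^{''}}}^{\mathcal{V^{'}}}$. The two outer inclusions then come from results already in the paper.

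\emph{Step 1: reduction.} Since $(\mathcal{U^{''}},\mathcal{V^{''}})$ and $(\mathcal{U^{'}},\mathcal{V^{'}})$ are cotorsion pairs, all four classes contain $0$ and $\mathcal{I}_{\mathcal{A^{'}}}\subseteq\mathcal{V^{'}}$. Lemma~\ref{chang} gives $\mathcal{D}_{\mathcal{V''}}^{\mathcal{V'}}=i_*(\mathcal{V'})\cup j_*(\mathcal{V''})\subseteq \mathcal{N}_{\mathcal{V^{''}}}^{\mathcal{V^{'}}}$. Hence
$$^{\perp}\mathcal{N}_{\mathcal{V^{''}}}^{\mathcal{V^{'}}}\subseteq {}^{\perp}(\mathcal{D}_{\mathcal{V''}}^{\mathcal{V'}})=\mathcal{M}_{\mathcal{U^{''}}}^{\mathcal{U^{'}}},$$
where the equality is Lemma~\ref{lem-syz}(2); this is the only place $(\mathbb{R}_1j_*)(\mathcal{V}^{''})=0$ is used. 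If we also know $\mathcal{M}_{\mathcal{U^{''}}}^{\mathcal{U^{'}}}\subseteq {}^{\perp}\mathcal{N}_{\mathcal{V^{''}}}^{\mathcal{V^{'}}}$, then $^{\perp}\mathcal{N}=\mathcal{M}$. Since $(^{\perp}\mathcal{N},\mathcal{N})$ is a cotorsion pair by Theorem~\ref{3.3} (using $(\mathbb{L}_1j_!)(\mathcal{U^{''}})=0$), we get $\mathcal{M}^{\perp}=(^{\perp}\mathcal{N})^{\perp}=\mathcal{N}$. This is exactly the claimed equality of pairs.

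\emph{Step 2: the orthogonality.} Let $X\in\mathcal{M}_{\mathcal{U^{''}}}^{\mathcal{U^{'}}}$ and $Y\in\mathcal{N}_{\mathcal{V^{''}}}^{\mathcal{V^{'}}}$. Since $(\mathbb{L}_1i^*)(X)=0$ and the recollement satisfies (P), the converse part of Lemma~\ref{3.4}(2) shows that $\varepsilon_X$ is a monomorphism. Remark~\ref{2.4}(3) then yields a short exact sequence
$$0\to j_!j^*X\overset{\varepsilon_X}{\to} X\to i_*i^*X\to 0.$$
Apply $\Hom_{\mathcal{A}}(-,Y)$ and look at the segment
$$\Ext^1_{\mathcal{A}}(i_*i^*X,Y)\to\Ext^1_{\mathcal{A}}(X,Y)\to\Ext^1_{\mathcal{A}}(j_!j^*X,Y).$$
For the left term, $(\mathbb{R}_1i^!)(Y)=0$, so Lemma~\ref{3.1}(1) gives $\Ext^1_{\mathcal{A}}(i_*i^*X,Y)\cong\Ext^1_{\mathcal{A^{'}}}(i^*X,i^!Y)$. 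This vanishes because $i^*X\in\mathcal{U^{'}}$ and $i^!Y\in\mathcal{V^{'}}$. For the right term, $j^*X\in\mathcal{U^{''}}$ and $(\mathbb{L}_1j_!)(\mathcal{U^{''}})=0$, so Lemma~\ref{3.1}(4) gives $\Ext^1_{\mathcal{A}}(j_!j^*X,Y)\cong\Ext^1_{\mathcal{A^{''}}}(j^*X,j^*Y)$. This vanishes because $j^*Y\in\mathcal{V^{''}}$. Therefore $\Ext^1_{\mathcal{A}}(X,Y)=0$.

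\emph{Main obstacle.} The only delicate point is obtaining the short exact sequence for $X$. In general Remark~\ref{2.4}(3) only gives a four-term sequence with kernel $i_*M'$. The sequence becomes short exact only because $\varepsilon_X$ is monic, and for this we need condition (P) to invoke the converse direction of Lemma~\ref{3.4}(2). Condition (P) enters the proof only at this step. Everything else is the bookkeeping of Lemmas~\ref{3.1}, \ref{lem-syz}, \ref{chang} and Theorem~\ref{3.3} described above.
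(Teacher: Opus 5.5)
Your proof is correct, but it is the mirror image of the paper's argument rather than the same one. Both arguments rest on the same core fact, $\Ext^1_{\mathcal{A}}(\mathcal{M}_{\mathcal{U''}}^{\mathcal{U'}},\mathcal{N}_{\mathcal{V''}}^{\mathcal{V'}})=0$, and they obtain it by decomposing objects on opposite sides.

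The paper identifies the right-hand classes, $\mathcal{N}_{\mathcal{V''}}^{\mathcal{V'}}=(\mathcal{M}_{\mathcal{U''}}^{\mathcal{U'}})^{\perp}$:
\begin{itemize}
\item For $X\in\mathcal{N}_{\mathcal{V''}}^{\mathcal{V'}}$ it shows $\delta_X$ is epic and splits $X$ by $0\to i_*i^!X\to X\to j_*j^*X\to 0$.
\item It then kills $\Ext^1$ against $\mathcal{M}_{\mathcal{U''}}^{\mathcal{U'}}$ with Lemma~\ref{3.1}(2),(3); this is where $(\mathbb{R}_1j_*)(\mathcal{V''})=0$ enters.
\item The reverse inclusion comes from $\mathcal{C}_{\mathcal{U''}}^{\mathcal{U'}}\subseteq\mathcal{M}_{\mathcal{U''}}^{\mathcal{U'}}$ together with Lemma~\ref{lem-syz}(1).
\end{itemize}

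You instead identify the left-hand classes:
\begin{itemize}
\item You split $X\in\mathcal{M}_{\mathcal{U''}}^{\mathcal{U'}}$ via the counit and use Lemma~\ref{3.1}(1),(4).
\item You get the other inclusion from Lemma~\ref{lem-syz}(2).
\end{itemize}
So the two vanishing hypotheses on $j_!$ and $j_*$ swap roles between the two proofs.

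Your route has a concrete advantage: condition (P) is used exactly as stated. That $\varepsilon_X$ is monic follows directly from the converse in Lemma~\ref{3.4}(2). The paper needs $\delta_X$ to be epic, so it must first pass through Lemma~\ref{P} to turn (P) on projectives into the dual condition on injectives.

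The paper's route does produce the decomposition $0\to i_*i^!X\to X\to j_*j^*X\to 0$ for objects of $\mathcal{N}_{\mathcal{V''}}^{\mathcal{V'}}$. It reuses this later, for instance in Proposition~\ref{3.16}, and Lemma~\ref{P} is needed elsewhere anyway. Nothing is lost globally, but for this proposition alone your argument is the leaner one.
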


\begin{proof}
It follows from Theorem~\ref{3.3} that both $(^{\bot}{\mathcal{N}_{\mathcal{V^{''}}}^{\mathcal{V^{'}}}},
\mathcal{N}_{\mathcal{V^{''}}}^{\mathcal{V^{'}}})$ and
$(\mathcal{M}_{\mathcal{U^{''}}}^{\mathcal{U^{'}}},
({\mathcal{M}_{\mathcal{U^{''}}}^{\mathcal{U^{'}}}})^\bot)$
 are cotorsion pairs in $\mathcal{A}$. Now we prove $\mathcal{N}_{\mathcal{V^{''}}}^{\mathcal{V^{'}}}=
 ({\mathcal{M}_{\mathcal{U^{''}}}^{\mathcal{U^{'}}}})^\bot$,
 and then the two cotorsion pairs coincide.

First, we prove $\mathcal{N}_{\mathcal{V^{''}}}^{\mathcal{V^{'}}}\subseteq
 ({\mathcal{M}_{\mathcal{U^{''}}}^{\mathcal{U^{'}}}})^\bot$.
For any $ X\in \mathcal{N}_{\mathcal{V^{''}}}^{\mathcal{V^{'}}}$, we have
$i^!X\in \mathcal{V'}$, $j^*X\in \mathcal{V''}$ and $(\mathbb{R}_1i^!)X=0$.
Then it follows from
Lemma~\ref{P} and Lemma~\ref{3.4} that $\delta_X $ is an epimorphism.
By Remark~\ref{2.4} (3), we have a short exact sequence $$\xymatrix@C=2pc{ 0\ar[r] & i_*i^!X\ar[r] & X\ar[r]
^{\delta_X}& j_*j^*X\ar[r]&0}.$$
Therefore, to show $ X\in ({\mathcal{M}_{\mathcal{U^{''}}}^{\mathcal{U^{'}}}})^\bot$, it suffices to
prove that $i_*i^!X \in ({\mathcal{M}_{\mathcal{U^{''}}}^{\mathcal{U^{'}}}})^\bot$ and
$j_*j^*X \in ({\mathcal{M}_{\mathcal{U^{''}}}^{\mathcal{U^{'}}}})^\bot$.
For any $Y\in \mathcal{M} _{\mathcal{U^{''}}}^{\mathcal{U^{'}}}$,
      we have $i^*Y\in {\mathcal{U^{'}}}$, $j^*Y\in{\mathcal{U}^{''}}$ and $(\mathbb{L}_1i^*)(Y)=0$.
So, it follows from Lemma~\ref{3.1} (2) that $\Ext^1_{\mathcal{A}}(Y,i_*i^!X)\cong \Ext^1_{\mathcal{A^{'}}}(i^*Y,i^!X)$,
which is zero since $i^*Y\in {\mathcal{U^{'}}}$ and $i^!X\in \mathcal{V'}$.
Therefore, we infer that $i_*i^!X \in ({\mathcal{M}_{\mathcal{U^{''}}}^{\mathcal{U^{'}}}})^\bot$.
On the other hand, it follows from
$(\mathbb{R}_1j_*)({\mathcal{V}^{''}})=0$ that $(\mathbb{R}_1j_*)(j^*X)=0$.
Applying Lemma~\ref{3.1}(3),
we have $\Ext^1_{\mathcal{A}}(Y,j_*j^*X)\cong \Ext^1_{\mathcal{A^{''}}}(j^*Y,j^*X)$,
which is zero since $j^*Y\in{\mathcal{U}^{''}}$ and $j^*X\in \mathcal{V''}$.
Therefore, we get $j_*j^*X\in ({\mathcal{M} _{\mathcal{U^{''}}}^{\mathcal{U^{'}}}})^\bot$ and then $X\in ({\mathcal{M} _{\mathcal{U^{''}}}^{\mathcal{U^{'}}}})^\bot$.

Next, we prove $({\mathcal{M} _{\mathcal{U^{''}}}^{\mathcal{U^{'}}}})^\bot \subseteq\mathcal{N}_{\mathcal{V^{''}}}^{\mathcal{V^{'}}}$.
Since $(\mathcal{U^{'}},\mathcal{V^{'}})$ and $(\mathcal{U^{''}},\mathcal{V{''}})$ are
cotorsion pairs, it follows that all of $\mathcal{U'}$, $\mathcal{V'}$, $\mathcal{U''}$ and $\mathcal{V''}$
 contain the zero object.
By Lemma~\ref{chang}, we have $i_*\mathcal{U'}\subseteq \mathcal{M}_{\mathcal{U^{''}}}^{\mathcal{U^{'}}}$
and $j_!\mathcal{U''}\subseteq \mathcal{M}_{\mathcal{U^{''}}}^{\mathcal{U^{'}}}$, and then
$\mathcal{C}_{{\mathcal{U}^{''}}}^{{\mathcal{U^{'}}}}=i_*( {\mathcal{U^{'}}})\cup{j_!({\mathcal{U}^{''}})}
\subseteq \mathcal{M} _{\mathcal{U^{''}}}^{\mathcal{U^{'}}}$.
Therefore, we get $({\mathcal{M} _{\mathcal{U^{''}}}^{\mathcal{U^{'}}}})^\bot \subseteq ({\mathcal{C}_{{\mathcal{U}^{''}}}^{\mathcal{U^{'}}}})^\perp$,
and by Lemma~\ref{lem-syz} (1), we infer that $({\mathcal{M} _{\mathcal{U^{''}}}^{\mathcal{U^{'}}}})^\bot \subseteq\mathcal{N}_{\mathcal{V^{''}}}^{\mathcal{V^{'}}}$.
\end{proof}

Motivated by \cite[Theorem 4.4]{ML}, we consider the following theorem.

\begin{theorem}\label{3.6}
Let $(\mathcal{A^{'}}, \mathcal{A}, \mathcal{A^{''}}, i^\ast, i_\ast, i^!, j_!, j^\ast, j_\ast)$
be a recollement satisfying condition (P). Assume that $\mathcal{U'}$, $\mathcal{V'}$, $\mathcal{U''}$ and $\mathcal{V''}$ are
classes of objects in $\mathcal{A'}$ and $\mathcal{A''}$,
 respectively. If
$(\mathbb{L}_1j_!)(^{\bot}{\mathcal{V^{''}}})=0$, $(\mathbb{R}_1j_*)(\mathcal{U''}^\bot)=0$, and either
$(\mathbb{L}_1j_!)(\mathcal{U^{''}})=0$ or
$(\mathbb{R}_1j_*)(\mathcal{V}^{''})=0$,
then $(\mathcal{U^{'}},\mathcal{V^{'}})$ and $(\mathcal{U^{''}},\mathcal{V{''}})$ are cotorsion pairs if and only if $(\mathcal{M}_{\mathcal{U^{''}}}^{\mathcal{U^{'}}},\mathcal{N}_{\mathcal{V^{''}}}^{\mathcal{V^{'}}})$ is a cotorsion pair.
\end{theorem}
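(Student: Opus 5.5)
For the direction ``$\Rightarrow$'' I will reduce to Proposition~\ref{prop-two-contor-equal}, and for ``$\Leftarrow$'' I will test objects of the form $i_*L$, $j_!L$, $j_*L$ against $\mathcal{M}=\mathcal{M}_{\mathcal{U''}}^{\mathcal{U'}}$ and $\mathcal{N}=\mathcal{N}_{\mathcal{V''}}^{\mathcal{V'}}$, using Lemma~\ref{3.1} and Lemma~\ref{chang}.

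\emph{($\Rightarrow$).} Assume $(\mathcal{U'},\mathcal{V'})$ and $(\mathcal{U''},\mathcal{V''})$ are cotorsion pairs. Then $\mathcal{U''}={}^{\perp}\mathcal{V''}$ and $\mathcal{V''}=\mathcal{U''}^{\perp}$. So the hypotheses $(\mathbb{L}_1j_!)({}^{\perp}\mathcal{V''})=0$ and $(\mathbb{R}_1j_*)(\mathcal{U''}^{\perp})=0$ give both $(\mathbb{L}_1j_!)(\mathcal{U''})=0$ and $(\mathbb{R}_1j_*)(\mathcal{V''})=0$. By Proposition~\ref{prop-two-contor-equal}, which is where condition (P) is used, we get ${}^{\perp}\mathcal{N}=\mathcal{M}$ and $\mathcal{M}^{\perp}=\mathcal{N}$. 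Hence $(\mathcal{M},\mathcal{N})$ is a cotorsion pair.

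\emph{($\Leftarrow$).} Assume $(\mathcal{M},\mathcal{N})$ is a cotorsion pair. Then $0\in\mathcal{M}\cap\mathcal{N}$, and therefore $0=i^*0=i^!0$ lies in $\mathcal{U'}\cap\mathcal{V'}$ and $0=j^*0$ lies in $\mathcal{U''}\cap\mathcal{V''}$. So Lemma~\ref{chang} applies and gives $i_*\mathcal{U'},\,j_!\mathcal{U''}\subseteq\mathcal{M}$ and $i_*\mathcal{V'},\,j_*\mathcal{V''}\subseteq\mathcal{N}$. Recall also that $(\mathbb{L}_1i^*)i_*=0$ and $(\mathbb{R}_1i^!)i_*=0$ \cite[Proposition 4.7]{FV}.

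For the pair in $\mathcal{A'}$ there are four inclusions.
\begin{itemize}
\item[(a)] $\mathcal{U'}^{\perp}\subseteq\mathcal{V'}$. Let $L\in\mathcal{U'}^{\perp}$. For any $Y\in\mathcal{M}$, Lemma~\ref{3.1}(2) gives $\Ext^1_{\mathcal{A}}(Y,i_*L)\cong\Ext^1_{\mathcal{A'}}(i^*Y,L)=0$. Thus $i_*L\in\mathcal{M}^{\perp}=\mathcal{N}$, and so $L\cong i^!i_*L\in\mathcal{V'}$.
\item[(b)] $\mathcal{V'}\subseteq\mathcal{U'}^{\perp}$. Let $L\in\mathcal{V'}$ and $A\in\mathcal{U'}$. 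Then $i_*L\in\mathcal{N}$ and $i_*A\in\mathcal{M}$, so $\Ext^1_{\mathcal{A}}(i_*A,i_*L)=0$. Since $(\mathbb{R}_1i^!)(i_*L)=0$, Lemma~\ref{3.1}(1) turns this into $\Ext^1_{\mathcal{A'}}(A,L)=0$.
\item[(c)] ${}^{\perp}\mathcal{V'}\subseteq\mathcal{U'}$. This is dual to (a): use Lemma~\ref{3.1}(1) to get $i_*L\in{}^{\perp}\mathcal{N}=\mathcal{M}$, hence $L\cong i^*i_*L\in\mathcal{U'}$.
\item[(d)] $\mathcal{U'}\subseteq{}^{\perp}\mathcal{V'}$. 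This follows from (b).
\end{itemize}

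For the pair in $\mathcal{A''}$ the argument is the same, and the extra hypotheses on $j_!,j_*$ replace the automatic vanishing used above.
\begin{itemize}
\item[(a$''$)] $\mathcal{U''}^{\perp}\subseteq\mathcal{V''}$. Let $L\in\mathcal{U''}^{\perp}$. Then $(\mathbb{R}_1j_*)(L)=0$, so Lemma~\ref{3.1}(3) gives $\Ext^1_{\mathcal{A}}(Y,j_*L)\cong\Ext^1_{\mathcal{A''}}(j^*Y,L)=0$ for every $Y\in\mathcal{M}$. Hence $j_*L\in\mathcal{N}$ and $L\cong j^*j_*L\in\mathcal{V''}$.
\item[(b$''$)] ${}^{\perp}\mathcal{V''}\subseteq\mathcal{U''}$. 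Let $L\in{}^{\perp}\mathcal{V''}$. Then $(\mathbb{L}_1j_!)(L)=0$, and Lemma~\ref{3.1}(4) gives $j_!L\in{}^{\perp}\mathcal{N}=\mathcal{M}$. Hence $L\cong j^*j_!L\in\mathcal{U''}$.
\item[(c$''$)] $\Ext^1_{\mathcal{A''}}(\mathcal{U''},\mathcal{V''})=0$. Let $A\in\mathcal{U''}$ and $L\in\mathcal{V''}$. Then $\Ext^1_{\mathcal{A}}(j_!A,j_*L)=0$ because $j_!A\in\mathcal{M}$ and $j_*L\in\mathcal{N}$. This is where the ``either/or'' hypothesis enters. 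If $(\mathbb{L}_1j_!)(\mathcal{U''})=0$, Lemma~\ref{3.1}(4) gives $\Ext^1_{\mathcal{A''}}(A,j^*j_*L)=\Ext^1_{\mathcal{A''}}(A,L)=0$. If instead $(\mathbb{R}_1j_*)(\mathcal{V''})=0$, Lemma~\ref{3.1}(3) gives $\Ext^1_{\mathcal{A''}}(j^*j_!A,L)=\Ext^1_{\mathcal{A''}}(A,L)=0$.
\end{itemize}
Together these give $\mathcal{U''}^{\perp}=\mathcal{V''}$ and ${}^{\perp}\mathcal{V''}=\mathcal{U''}$.

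The main care point is step (c$''$), matching the correct adjunction isomorphism to whichever vanishing hypothesis is available. A secondary point is first confirming that all four classes contain $0$, since Lemma~\ref{chang} needs this.
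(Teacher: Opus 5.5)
Your proposal is correct and follows the paper's proof essentially step by step. The forward direction goes through Proposition~\ref{prop-two-contor-equal} after noting $\mathcal{U''}={}^{\perp}\mathcal{V''}$ and $\mathcal{V''}=\mathcal{U''}^{\perp}$, and the converse tests $i_*L$, $j_!L$, $j_*L$ against $(\mathcal{M},\mathcal{N})$ via Lemma~\ref{3.1} and Lemma~\ref{chang}, with the same either/or case split for $\Ext^1_{\mathcal{A''}}(\mathcal{U''},\mathcal{V''})=0$. The differences are cosmetic: for $\Ext^1_{\mathcal{A'}}(\mathcal{U'},\mathcal{V'})=0$ the paper cites $\Ext^1_{\mathcal{A'}}(A,B)\cong\Ext^1_{\mathcal{A}}(i_*A,i_*B)$ from \cite{Psa14}, whereas you derive it from Lemma~\ref{3.1}(1) and $(\mathbb{R}_1i^!)i_*=0$, and you also write out the inclusions the paper leaves as ``similar''.
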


\begin{proof}
If $(\mathcal{U^{'}},\mathcal{V^{'}})$ and $(\mathcal{U^{''}},\mathcal{V{''}})$ are cotorsion pairs, then
$\mathcal{U^{''}}= {^{\bot}{\mathcal{V^{''}}}}$ and $\mathcal{V{''}}= \mathcal{U''}^\bot$, and thus
$(\mathbb{L}_1j_!)(\mathcal{U^{''}})=0$ and
$(\mathbb{R}_1j_*)(\mathcal{V}^{''})=0$.
Then it follows from Theorem~\ref{3.3} and Proposition~\ref{prop-two-contor-equal} that $(\mathcal{M}_{\mathcal{U^{''}}}^{\mathcal{U^{'}}},\mathcal{N}_{\mathcal{V^{''}}}^{\mathcal{V^{'}}})$ is a cotorsion pair.

Conversely, assume that $(\mathcal{M}_{\mathcal{U^{''}}}^{\mathcal{U^{'}}},\mathcal{N}_{\mathcal{V^{''}}}^{\mathcal{V^{'}}})$ is a cotorsion pair in $\mathcal{A}$.
Then both $\mathcal{M}_{\mathcal{U^{''}}}^{\mathcal{U^{'}}}$ and $\mathcal{N}_{\mathcal{V^{''}}}^{\mathcal{V^{'}}}$
contain the zero object, and thus $O_{A'}=i^*O_{A}\in \mathcal{U'}$ by the definition of $\mathcal{M}_{\mathcal{U^{''}}}^{\mathcal{U^{'}}}$.
Similarly, we get that all of $\mathcal{V'}$, $\mathcal{U''}$ and $\mathcal{V''}$
contain the zero object.
For any $ A\in {\mathcal{U^{'}}}$ and $ B\in \mathcal{V^{'}}$, it follows from
\cite[Remark 3.7 (1)]{Psa14} that
$\Ext^1_{\mathcal{A^{'}}}(A,B)\cong \Ext^1_{\mathcal{A}}(i_*A,i_*B)$,
which is zero since $i_*A\in \mathcal{M}_{\mathcal{U^{''}}}^{\mathcal{U^{'}}}$ and $i_*B\in \mathcal{N}_{\mathcal{V^{''}}}^{\mathcal{V^{'}}}$,
see Lemma~\ref{chang}.
Therefore, we get ${\mathcal{U^{'}}}\subseteq {^{\perp}\mathcal{V^{'}}}$ and $\mathcal{V^{'}}\subseteq {\mathcal{U^{'}}}^\perp$.

Now we prove ${^{\perp}\mathcal{V^{'}}}\subseteq {\mathcal{U^{'}}}$.
For any $ X\in {^{\perp}\mathcal{V^{'}}}$, we will claim $i_*X\in \mathcal{M} _{\mathcal{U^{''}}}^{\mathcal{U^{'}}}$,
and then $X\cong i^*i_*X\in {\mathcal{U^{'}}}$ by the definition of $\mathcal{M} _{\mathcal{U^{''}}}^{\mathcal{U^{'}}}$.
Since $(\mathcal{M}_{\mathcal{U^{''}}}^{\mathcal{U^{'}}},\mathcal{N}_{\mathcal{V^{''}}}^{\mathcal{V^{'}}})$ is a cotorsion pair,
to show $i_*X\in \mathcal{M} _{\mathcal{U^{''}}}^{\mathcal{U^{'}}}$, it suffices to prove that
$i_*X\in {^{\perp}\mathcal{N}_{\mathcal{V^{''}}}^{\mathcal{V^{'}}}}$.
For any $Y\in \mathcal{N}_{\mathcal{V^{''}}}^{\mathcal{V^{'}}}$, we have $i^!Y\in \mathcal{V{'}}$ and $(\mathbb{R}_1i^!)Y=0$.
Then it follows from Lemma~\ref{3.1} (1) that $\Ext^1_{\mathcal{A}}(i_*X,Y)
\cong\Ext^1_{\mathcal{A^{'}}}(X,i^!Y)$, which is zero since $ X\in {^{\perp}\mathcal{V^{'}}}$.
Therefore, we get $i_*X\in {^{\perp}\mathcal{N}_{\mathcal{V^{''}}}^{\mathcal{V^{'}}}}$, and then
we obtain ${^{\perp}\mathcal{V^{'}}}\subseteq {\mathcal{U^{'}}}$.
By a similar argument, we can prove ${\mathcal{U^{'}}}^\perp \subseteq \mathcal{V^{'}}$,
and then $(\mathcal{U^{'}},\mathcal{V^{'}})$ is a cotorsion pair in $\mathcal{A'}$.

For any $ C\in {\mathcal{U}^{''}}$ and $ D\in {\mathcal{V}^{''}}$, we claim that
$\Ext^1_{\mathcal{A^{''}}}(C,D)=0$. Indeed, if $(\mathbb{L}_1j_!)(\mathcal{U^{''}})=0$,
then we have $$\Ext^1_{\mathcal{A^{''}}}(C,D)\cong \Ext^1_{\mathcal{A^{''}}}(C,j^*j_*D)\cong \Ext^1_{\mathcal{A}}(j_!C,j_*D),$$
where the second isomorphism follows from Lemma~\ref{3.1} (4).
If $(\mathbb{R}_1j_*)(\mathcal{V^{''}})=0$,
then we have $$\Ext^1_{\mathcal{A^{''}}}(C,D)\cong \Ext^1_{\mathcal{A^{''}}}(j^*j_!C,D)\cong \Ext^1_{\mathcal{A}}(j_!C,j_*D),$$
where the second isomorphism follows from Lemma~\ref{3.1} (3).
By Lemma~\ref{chang}, we get $j_!C\in\mathcal{M}_{\mathcal{U^{''}}}^{\mathcal{U^{'}}}$ and $j_*D\in \mathcal{N}_{\mathcal{V^{''}}}^{\mathcal{V^{'}}}$,
and then $\Ext^1_{\mathcal{A}}(j_!C,j_*D)=0$. Therefore,
we obtain $\Ext^1_{\mathcal{A^{''}}}(C,D)=0$ and thus
${\mathcal{U}^{''}}\subseteq {^{\perp}{\mathcal{V}^{''}}}$ and $ {\mathcal{V}^{''}}\subseteq{\mathcal{U}^{''}}^\perp$.

Now we prove ${^{\perp}\mathcal{V^{''}}}\subseteq {\mathcal{U^{''}}}$.
For any $ X\in {^{\perp}\mathcal{V^{''}}}$, we will claim $j_!X\in \mathcal{M} _{\mathcal{U^{''}}}^{\mathcal{U^{'}}}$,
and then $X\cong j^*j_!X\in {\mathcal{U^{''}}}$ by the definition of $\mathcal{M} _{\mathcal{U^{''}}}^{\mathcal{U^{'}}}$.
Since $(\mathcal{M}_{\mathcal{U^{''}}}^{\mathcal{U^{'}}},\mathcal{N}_{\mathcal{V^{''}}}^{\mathcal{V^{'}}})$ is a cotorsion pair,
to show $j_!X\in \mathcal{M} _{\mathcal{U^{''}}}^{\mathcal{U^{'}}}$, it suffices to prove that
$j_!X\in {^{\perp}\mathcal{N}_{\mathcal{V^{''}}}^{\mathcal{V^{'}}}}$.
For any $Y\in \mathcal{N}_{\mathcal{V^{''}}}^{\mathcal{V^{'}}}$, we have $j^*Y\in \mathcal{V{''}}$.
Since $(\mathbb{L}_1j_!)(^{\bot}{\mathcal{V^{''}}})=0$,
it follows from Lemma~\ref{3.1} (4) that $\Ext^1_{\mathcal{A}}(j_!X,Y)\cong
\Ext^1_{\mathcal{A^{''}}}(X,j^*Y)$, which is zero since $ X\in {^{\perp}\mathcal{V^{''}}}$.
Therefore, we get $j_!X\in {^{\perp}\mathcal{N}_{\mathcal{V^{''}}}^{\mathcal{V^{'}}}}$, and then
we obtain ${^{\perp}\mathcal{V^{''}}}\subseteq {\mathcal{U^{''}}}$.
By a similar argument, we can prove ${\mathcal{U^{''}}}^\perp \subseteq \mathcal{V^{''}}$ (where
the assumption $(\mathbb{R}_1j_*)(\mathcal{U''}^\bot)=0$ is used),
and then $(\mathcal{U^{''}},\mathcal{V^{''}})$ is a cotorsion pair in $\mathcal{A'}$.
\end{proof}

Suppose that $(\mathcal{U^{'}},\mathcal{V^{'}})$ and $(\mathcal{U^{''}},\mathcal{V{''}})$ are hereditary
or complete,
it is natural to ask if the induced cotorsion pair $(\mathcal{M}_{\mathcal{U^{''}}}^{\mathcal{U^{'}}},
\mathcal{N}_{\mathcal{V^{''}}}^{\mathcal{V^{'}}})$ have the same property.

\begin{theorem}\label{3.7}
Adopt the notations and assumptions from Theorem~\ref{3.6}. Then the cotorsion pairs
 $(\mathcal{U^{'}},\mathcal{V^{'}})$ and $(\mathcal{U^{''}},\mathcal{V{''}})$ are hereditary if and only if so is $(\mathcal{M}_{\mathcal{U^{''}}}^{\mathcal{U^{'}}},\mathcal{N}_{\mathcal{V^{''}}}^{\mathcal{V^{'}}})$.
	\end{theorem}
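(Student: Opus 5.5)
The plan is to use Proposition~\ref{he1} to reduce heredity, on both sides, to closure of the left-hand class under kernels of epimorphisms. By Theorem~\ref{3.6}, $(\mathcal{U^{'}},\mathcal{V^{'}})$ and $(\mathcal{U^{''}},\mathcal{V^{''}})$ are cotorsion pairs if and only if $(\mathcal{M}_{\mathcal{U^{''}}}^{\mathcal{U^{'}}},\mathcal{N}_{\mathcal{V^{''}}}^{\mathcal{V^{'}}})$ is one. So in either direction all three pairs are cotorsion pairs, and in particular $\mathcal{U^{''}}={^{\perp}\mathcal{V^{''}}}$. Hence $(\mathbb{L}_1j_!)(\mathcal{U^{''}})=0$ and $(\mathbb{R}_1j_*)(\mathcal{V^{''}})=0$. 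It therefore suffices to compare closure of $\mathcal{U^{'}}$, $\mathcal{U^{''}}$ and $\mathcal{M}_{\mathcal{U^{''}}}^{\mathcal{U^{'}}}$ under kernels of epimorphisms.

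\emph{Glued pair hereditary $\Rightarrow$ factors hereditary.} Let $0\to A\to B\to C\to 0$ be exact in $\mathcal{A'}$ with $B,C\in\mathcal{U'}$. Since $i_*$ is exact, $0\to i_*A\to i_*B\to i_*C\to 0$ is exact. By Lemma~\ref{chang}, $i_*B$ and $i_*C$ lie in $\mathcal{M}_{\mathcal{U^{''}}}^{\mathcal{U^{'}}}$, so by heredity $i_*A\in\mathcal{M}_{\mathcal{U^{''}}}^{\mathcal{U^{'}}}$. Therefore $A\cong i^*i_*A\in\mathcal{U'}$.

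Now let $0\to A\to B\to C\to 0$ be exact in $\mathcal{A''}$ with $B,C\in\mathcal{U''}$. Because $(\mathbb{L}_1j_!)(C)=0$, applying $j_!$ gives a short exact sequence $0\to j_!A\to j_!B\to j_!C\to 0$. Lemma~\ref{chang} places $j_!B$ and $j_!C$ in $\mathcal{M}_{\mathcal{U^{''}}}^{\mathcal{U^{'}}}$, so $j_!A$ lies there too. Hence $A\cong j^*j_!A\in\mathcal{U''}$.

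\emph{Factors hereditary $\Rightarrow$ glued pair hereditary.} Let $0\to K\overset{\iota}{\to} Y\to Z\to 0$ be exact in $\mathcal{A}$ with $Y,Z\in\mathcal{M}_{\mathcal{U^{''}}}^{\mathcal{U^{'}}}$.
\begin{itemize}
\item Since $j^*$ is exact and $\mathcal{U''}$ is closed under kernels of epimorphisms, we get $j^*K\in\mathcal{U''}$.
\item The long exact sequence of left derived functors of $i^*$, together with $(\mathbb{L}_1i^*)(Z)=0$, gives a short exact sequence $0\to i^*K\to i^*Y\to i^*Z\to 0$. Heredity of $(\mathcal{U'},\mathcal{V'})$ then gives $i^*K\in\mathcal{U'}$.
\end{itemize}

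The main obstacle is $(\mathbb{L}_1i^*)(K)=0$. The naive long exact sequence only yields $\mathbb{L}_2i^*Z\to\mathbb{L}_1i^*K\to 0$, and $\mathbb{L}_2i^*Z$ need not vanish. Here I will use condition (P). By Lemma~\ref{3.4}(2), it is enough to show that $\varepsilon_K$ is a monomorphism.
\begin{itemize}
\item Since $Y\in\mathcal{M}_{\mathcal{U^{''}}}^{\mathcal{U^{'}}}$, we have $(\mathbb{L}_1i^*)(Y)=0$, so $\varepsilon_Y$ is a monomorphism by Lemma~\ref{3.4}(2) (this direction needs no hypothesis).
\item The sequence $0\to j^*K\to j^*Y\to j^*Z\to 0$ has $j^*Z\in\mathcal{U''}$ and $(\mathbb{L}_1j_!)(j^*Z)=0$. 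Applying $j_!$ shows that $j_!j^*\iota$ is a monomorphism.
\item Naturality of $\varepsilon$ gives $\iota\,\varepsilon_K=\varepsilon_Y\,(j_!j^*\iota)$. The right-hand side is a composite of monomorphisms, so $\varepsilon_K$ is a monomorphism.
\end{itemize}
Hence $(\mathbb{L}_1i^*)(K)=0$ and $K\in\mathcal{M}_{\mathcal{U^{''}}}^{\mathcal{U^{'}}}$. By Proposition~\ref{he1}, $(\mathcal{M}_{\mathcal{U^{''}}}^{\mathcal{U^{'}}},\mathcal{N}_{\mathcal{V^{''}}}^{\mathcal{V^{'}}})$ is hereditary.
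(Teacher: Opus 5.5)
Your proof is correct. The direction ``factors hereditary $\Rightarrow$ glued pair hereditary'' is the paper's own argument. Exactness of $j^*$ and $(\mathbb{L}_1i^*)(Z)=0$ give $j^*K\in\mathcal{U''}$ and $i^*K\in\mathcal{U'}$. Then $(\mathbb{L}_1i^*)(K)=0$ comes from $\iota\,\varepsilon_K=\varepsilon_Y\,(j_!j^*\iota)$ together with $(\mathbb{L}_1j_!)(j^*Z)=0$.

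One correction: your parenthetical reverses the two halves of Lemma~\ref{3.4}(2).
\begin{itemize}
\item The implication ``$\varepsilon_K$ mono $\Rightarrow(\mathbb{L}_1i^*)(K)=0$'' holds unconditionally.
\item The implication ``$(\mathbb{L}_1i^*)(Y)=0\Rightarrow\varepsilon_Y$ mono'' is exactly the half that needs condition (P). Applied to projective $Y$, it literally is condition (P).
\end{itemize}
So your step for $\varepsilon_Y$ is the only point where (P) enters your heredity argument, apart from the appeal to Theorem~\ref{3.6}. Since (P) is a standing hypothesis, nothing breaks, but the annotation should be fixed.

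For the converse you take the dual route to the paper.
\begin{itemize}
\item You use Proposition~\ref{he1} to check that $\mathcal{U'}$ and $\mathcal{U''}$ are closed under kernels of epimorphisms. You push sequences into $\mathcal{M}_{\mathcal{U''}}^{\mathcal{U'}}$ via $i_*$, and via $j_!$ using $(\mathbb{L}_1j_!)({}^{\perp}\mathcal{V''})=0$. You recover them with $i^*i_*\cong 1$ and $j^*j_!\cong 1$.
\item The paper instead checks that $\mathcal{V'}$ and $\mathcal{V''}$ are closed under cokernels of monomorphisms. It pushes into $\mathcal{N}_{\mathcal{V''}}^{\mathcal{V'}}$ via $i_*$, and via $j_*$ using $(\mathbb{R}_1j_*)(\mathcal{U''}^{\perp})=0$. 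It recovers with $i^!i_*\cong 1$ and $j^*j_*\cong 1$.
\end{itemize}
Both are valid. Your version is more uniform: both directions work with the single class $\mathcal{M}_{\mathcal{U''}}^{\mathcal{U'}}$ and the single vanishing condition $(\mathbb{L}_1j_!)({}^{\perp}\mathcal{V''})=0$. However, Theorem~\ref{3.6}, which both arguments invoke, already requires both vanishing conditions, so neither version saves any assumptions.
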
	
		\begin{proof}
If $(\mathcal{U^{'}},\mathcal{V^{'}})$ and $(\mathcal{U^{''}},\mathcal{V{''}})$ are hereditary cotorsion pairs,
then it follows from Theorem~\ref{3.6} that $(\mathcal{M}_{\mathcal{U^{''}}}^{\mathcal{U^{'}}},
\mathcal{N}_{\mathcal{V^{''}}}^{\mathcal{V^{'}}})$ is a cotorsion pair. It remains to show that
$(\mathcal{M}_{\mathcal{U^{''}}}^{\mathcal{U^{'}}},
\mathcal{N}_{\mathcal{V^{''}}}^{\mathcal{V^{'}}})$ is hereditary. In view of
Proposition~\ref{he1}, it suffices to prove that $\mathcal{M}_{\mathcal{U^{''}}}^{\mathcal{U^{'}}}$ is closed under the kernel of epimorphisms.
Let $\xymatrix@C=1.5pc{0\ar[r]& X\ar[r]^{f} & Y\ar[r]^g &Z \ar[r]& 0}$ be a short exact sequence in $\mathcal{A}$
with $Y,Z\in\mathcal{M}_{\mathcal{U^{''}}}^{\mathcal{U^{'}}}$.
Then we have $i^*Y\in\mathcal{U{'}}$, $i^*Z\in\mathcal{U{'}}$, $j^*Y\in{\mathcal{U}^{''}}$, $j^*Z\in{\mathcal{U}^{''}}$,
$(\mathbb{L}_1i^*)(Y)=0$ and $(\mathbb{L}_1i^*)(Z)=0$. Therefore,
we get two exact sequences $$\xymatrix@C=2pc{0\ar[r]& i^*X\ar[r]^{i^*f} & i^*Y\ar[r]^{i^*g} &i^*Z \ar[r]& 0}$$ and $$\xymatrix@C=2pc{0\ar[r]& j^*X\ar[r]^{j^*f} & j^*Y\ar[r]^{j^*g} &j^*Z\ar[r]& 0},$$
which implies $i^*X\in\mathcal{U{'}}$ and $j^*X\in{\mathcal{U}^{''}}$
since $(\mathcal{U^{'}},\mathcal{V^{'}})$ and $(\mathcal{U^{''}},\mathcal{V{''}})$ are hereditary. Therefore,
 to show $X\in\mathcal{M}_{\mathcal{U^{''}}}^{\mathcal{U^{'}}}$, it remains to prove $(\mathbb{L}_1i^*)(X)=0$.
Since $j^*Z\in{\mathcal{U}^{''}}= {^{\bot}{\mathcal{V^{''}}}}$
and $(\mathbb{L}_1j_!)(^{\bot}{\mathcal{V^{''}}})=0$, we get $(\mathbb{L}_1j_!)(j^*Z)=0$, and then
we have the following short exact sequence
 $$\xymatrix@C=2pc{0\ar[r]& j_!j^*X\ar[r]^{j_!j^*f} & j_!j^*Y\ar[r]^{j_!j^*g} &j_!j^*Z \ar[r]& 0}.$$
On the other hand, it follows from Lemma~\ref{3.4} that
$\varepsilon _Y$ is a monomorphism, and so is $\varepsilon_X$
by the following commutative diagram
 $$\xymatrix@C=2pc{
			0\ar[r]& j_!j^*X \ar[r]^{j_!j^*f}\ar[d]^{\varepsilon_X}& j_!j
^*Y\ar[r]\ar[d]^{\varepsilon_Y} & j_!j^*Z \ar[r]\ar[d]^{\varepsilon_Z}  & 0\\0\ar[r]&X\ar[r]^{f}&Y\ar[r]&Z\ar[r]&0.}$$
Applying Lemma~\ref{3.4} again, we obtain that $(\mathbb{L}_1i^*)(X)=0$ and thus $X\in\mathcal{M}_{\mathcal{U^{''}}}^{\mathcal{U^{'}}} $.

Conversely, assume that $(\mathcal{M}_{\mathcal{U^{''}}}^{\mathcal{U^{'}}},\mathcal{N}_{\mathcal{V^{''}}}^{\mathcal{V^{'}}})$
is a hereditary cotorsion pair. Then it follows from Theorem~\ref{3.6} that
both $(\mathcal{U^{'}},\mathcal{V^{'}})$ and $(\mathcal{U^{''}},\mathcal{V{''}})$ are cotorsion pairs.
It remains to show that both
$(\mathcal{U^{'}},\mathcal{V^{'}})$ and $(\mathcal{U^{''}},\mathcal{V{''}})$ are hereditary. In view of
Proposition~\ref{he1}, it suffices to prove that
both $\mathcal{V{'}}$ and ${\mathcal{V}^{''}}$ are closed under the cokernel of monomorphisms.

Let $\xymatrix@C=1.5pc{0\ar[r]& X'\ar[r]^f & Y'\ar[r]^g &Z' \ar[r]& 0}$ be a short exact sequence in
$\mathcal{A}'$ with $X',Y'\in\mathcal{V{'}}$ .
Applying $i_*$, we get a short exact sequence
  $$\xymatrix@C=2pc{0\ar[r]& i_*X'\ar[r]^{i_*f} & i_*Y'\ar[r]^{i_*f} &i_*Z' \ar[r]& 0}.$$
On the other hand, it follows from Lemma~\ref{chang} that $i_*X'$, $i_*Y'\in\mathcal{N}_{\mathcal{V^{''}}}^{\mathcal{V^{'}}}$,
which implies $i_*Z'\in\mathcal{N}_{\mathcal{V^{''}}}^{\mathcal{V^{'}}}$
since $(\mathcal{M}_{\mathcal{U^{''}}}^{\mathcal{U^{'}}},\mathcal{N}_{\mathcal{V^{''}}}^{\mathcal{V^{'}}})$ is hereditary.
Therefore, we get $Z'\cong i^!i_*Z'\in\mathcal{V{'}}$ by the definition of $\mathcal{N}_{\mathcal{V^{''}}}^{\mathcal{V^{'}}}$.

Let $\xymatrix@C=1.5pc{0\ar[r]& X^{''}\ar[r]^\alpha & Y^{''}\ar[r]^\beta &Z^{''} \ar[r]& 0}$ be a short exact sequence in
$\mathcal{A}''$ with
$X^{''},Y^{''}\in{\mathcal{V}^{''}}$. Since ${\mathcal{V}^{''}}=\mathcal{U''}^\bot$ and
$(\mathbb{R}_1j_*)(\mathcal{U''}^\bot)=0$,
we get a short exact sequence $\xymatrix@C=2pc{0\ar[r]& j_*X^{''}\ar[r]^{j_*(\alpha)} & j_*Y^{''}\ar[r]^{j_*(\beta)} &j_*Z^{''} \ar[r]& 0}$.
On the other hand, it follows from Lemma~\ref{chang} that
$j_*X^{''}$, $j_*Y^{''}\in\mathcal{N}_{\mathcal{V^{''}}}^{\mathcal{V^{'}}}$,
which implies $j_*Z^{''}\in\mathcal{N}_{\mathcal{V^{''}}}^{\mathcal{V^{'}}}$
since $(\mathcal{M}_{\mathcal{U^{''}}}^{\mathcal{U^{'}}},\mathcal{N}_{\mathcal{V^{''}}}^{\mathcal{V^{'}}})$ is hereditary.
Therefore, we get $Z^{''}\cong j^*j_*Z^{''}\in{\mathcal{V}^{''}}$ by the definition of $\mathcal{N}_{\mathcal{V^{''}}}^{\mathcal{V^{'}}}$.
		\end{proof}

To verify the completeness of the cotorsion pairs constructed earlier,
we need the following two lemmas. Recall that a subcategory is {\it resolving}
if it is closed under extensions and
kernels of epimorphisms and contains all projective
objects \cite{GR}. Let $\mathcal{C}$ be a class of objects of $\mathcal{A}$.
For any $X\in \mathcal{A}$, we say $X$ has a {\it special $\mathcal{C}$-precover}
if there is a short exact sequence $0\rightarrow Y\rightarrow C \rightarrow X \rightarrow0$ with $C\in\mathcal{C}$ and $Y\in{\mathcal{C}^\perp}$
\cite{EEE}.
Dually we have
the definitions of coresolving subcategory and special $\mathcal{C}$-preenvelope.
In the following two lemmas, we assume that $\mathcal{C}$ and $\mathcal{D}$ are two
subcategories of $\mathcal{A}$ such that $\mathcal{C}$ is resolving and $\mathcal{D}$ is coresolving.

\begin{lemma}{\rm \cite[Lemma 5.4]{ML}}\label{yong1}
Let $\xymatrix@C=1.5pc{0\ar[r]& A_1\ar[r] & A_2\ar[r] &A_3 \ar[r]& 0}$ be a
short exact sequence in $\mathcal{A}$.

{\rm(1)} If $A_1$ and $A_3$ have special $\mathcal{C}$-precovers, then so does $A_2$.

{\rm(2)} If $A_1$ and $A_3$ have special $\mathcal{D}$-preenvelopes, then so does $A_2$.
\end{lemma}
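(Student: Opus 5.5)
We prove (1) with a horseshoe-type construction; (2) is dual. Here $\mathcal{C}$ is resolving, so $\mathcal{C}$ is closed under extensions and contains the projectives. Moreover $\mathcal{C}^\perp$ is closed under extensions, by the long exact $\Ext$ sequence.

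Take special $\mathcal{C}$-precovers
$$0\to Y_1\to C_1\xrightarrow{p_1} A_1\to 0,\qquad 0\to Y_3\to C_3\xrightarrow{p_3} A_3\to 0,$$
with $C_1,C_3\in\mathcal{C}$ and $Y_1,Y_3\in\mathcal{C}^\perp$. Write the given sequence as $0\to A_1\xrightarrow{a} A_2\xrightarrow{b} A_3\to 0$. Form the pullback $E$ of $b$ along $p_3$. This yields a short exact sequence $0\to A_1\to E\to C_3\to 0$ and an epimorphism $E\to A_2$ whose kernel is $Y_3$.

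We now want to lift $p_1$ through this sequence. Pulling back $0\to Y_1\to C_1\to A_1\to 0$ is not the natural move; instead we push out along $A_1\to E$. Since the extension $0\to A_1\to E\to C_3\to 0$ lies in $\Ext^1(C_3,A_1)$, we consider the exact sequence
$$\Ext^1(C_3,C_1)\to \Ext^1(C_3,A_1)\to \Ext^2(C_3,Y_1).$$
This is the main obstacle: $\Ext^2(C_3,Y_1)$ need not vanish, because we only know $Y_1\in\mathcal{C}^\perp$.

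The fix uses resolving-ness. Take a projective cover epimorphism $P\to C_3$ with kernel $K$. Then $K\in\mathcal{C}$, since $\mathcal{C}$ is closed under kernels of epimorphisms and contains $P$. Hence $\Ext^2(C_3,Y_1)\cong\Ext^1(K,Y_1)=0$, and the obstruction disappears.

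So the class of $0\to A_1\to E\to C_3\to 0$ lifts to an extension $0\to C_1\to C_2\to C_3\to 0$ together with a morphism of extensions to it over $p_1$. The middle map $C_2\to E$ is then an epimorphism by the snake lemma, with kernel $Y_1$. Here $C_2\in\mathcal{C}$ by closure under extensions.

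Compose $C_2\to E\to A_2$. This composite is an epimorphism, and its kernel $Y_2$ fits into a short exact sequence $0\to Y_1\to Y_2\to Y_3\to 0$. To see this, apply the snake lemma to the map of short exact sequences
$$(0\to C_1\to C_2\to C_3\to 0)\longrightarrow(0\to A_1\to A_2\to A_3\to 0)$$
given by $(p_1,\ \text{composite},\ p_3)$; its outer vertical maps are epimorphisms with kernels $Y_1$ and $Y_3$. Since $Y_1,Y_3\in\mathcal{C}^\perp$ and $\mathcal{C}^\perp$ is closed under extensions, $Y_2\in\mathcal{C}^\perp$. Thus $0\to Y_2\to C_2\to A_2\to 0$ is a special $\mathcal{C}$-precover.

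Part (2) follows dually. Push out along special $\mathcal{D}$-preenvelopes, and kill the obstruction in $\Ext^2$ using an injective embedding and the fact that $\mathcal{D}$ is closed under cokernels of monomorphisms.
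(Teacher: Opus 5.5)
Your proof is correct. The paper gives no argument of its own here: the lemma is quoted from \cite[Lemma 5.4]{ML}, so there is no in-paper proof to match.

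I checked the steps of (1).
\begin{itemize}
\item The pullback gives exact sequences $0\to A_1\to E\to C_3\to 0$ and $0\to Y_3\to E\to A_2\to 0$.
\item The obstruction to lifting the class of $E$ through $(p_1)_*$ lies in $\Ext^2_{\mathcal{A}}(C_3,Y_1)$. Dimension shifting through $0\to K\to P\to C_3\to 0$ with $K\in\mathcal{C}$ gives $\Ext^2_{\mathcal{A}}(C_3,Y_1)\cong\Ext^1_{\mathcal{A}}(K,Y_1)=0$.
\item The resulting morphism of extensions $(p_1,q,1_{C_3})$ makes both squares for $(p_1,\ \text{composite},\ p_3)$ commute. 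The snake lemma then gives $0\to Y_1\to Y_2\to Y_3\to 0$.
\end{itemize}
The dual argument for (2) also goes through. There the obstruction lies in $\Ext^2_{\mathcal{A}}(Z_3,D_1)\cong\Ext^1_{\mathcal{A}}(Z_3,D')$, where $0\to D_1\to I\to D'\to 0$ with $I$ injective and $D'\in\mathcal{D}$.

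One wording fix: write ``an epimorphism $P\to C_3$ with $P$ projective'' instead of ``projective cover''. Only enough projectives are assumed, and projective covers need not exist in $\mathcal{A}$.

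The paper uses a pullback/pushout style for the companion Lemma~\ref{yong2}. A diagrammatic proof of this lemma in that style would lift $P\to C_3$ to $E$, then split an extension of $K$ by $Y_1$ using $\Ext^1_{\mathcal{A}}(K,Y_1)=0$. Your Yoneda-class formulation does the same thing, but it makes explicit that the only input beyond extension-closure of $\mathcal{C}$ (and of $\mathcal{C}^\perp$, which is automatic) is $\Ext^2_{\mathcal{A}}(\mathcal{C},\mathcal{C}^\perp)=0$. So your argument proves the statement for any extension-closed $\mathcal{C}$ with that vanishing, not only resolving ones.

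Your final snake-lemma step is exactly Lemma~\ref{yong2}(1) applied to $0\to Y_3\to E\to A_2\to 0$, with $0\to Y_1\to C_2\to E\to 0$ as the special precover of $E$. You could shorten it by citing that.
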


\begin{lemma}\label{yong2}
Let $\xymatrix@C=1.5pc{0\ar[r]& A_1\ar[r]^f & A_2\ar[r]^g &A_3 \ar[r]& 0}$ be a short exact sequence
in $\mathcal{A}$.

{\rm(1)} If $A_1\in\mathcal{C}^{\bot}$ and $A_2$ has a special $\mathcal{C}$-precover, then so does $A_3$.

{\rm(2)} If $A_1\in \mathcal{D}$ and $A_2$ has a special $\mathcal{D}$-preenvelope, then so does $A_3$.

{\rm(3)} If $A_3\in \mathcal{C}$ and $A_2$ has a special $\mathcal{C}$-precover, then so does $A_1$.

{\rm(4)} If $A_3\in {^{\bot}\mathcal{D}}$ and $A_2$ has a special $\mathcal{D}$-preenvelope, then so does $A_1$ .
\end{lemma}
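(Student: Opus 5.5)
The plan is to construct each required special precover or preenvelope directly. Throughout, $\mathcal{C}$ is resolving and $\mathcal{D}$ is coresolving, and we use the standard fact that $\Ext^1(\mathcal{C},\mathcal{C}^\perp)=0$ by definition. Since $\mathcal{C}$ is closed under kernels of epimorphisms and contains the projectives, dimension shifting gives $\Ext^i(\mathcal{C},\mathcal{C}^\perp)=0$ for all $i\ge1$. In particular $\mathcal{C}^\perp$ is closed under cokernels of monomorphisms. Dually, ${}^\perp\mathcal{D}$ is closed under kernels of epimorphisms.

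\textbf{(1)} Take a special precover $0\to Y\to C\overset{p}{\to} A_2\to0$ with $C\in\mathcal{C}$ and $Y\in\mathcal{C}^\perp$. The composite $gp:C\to A_3$ is an epimorphism with kernel $K$. The snake lemma gives an exact sequence $0\to Y\to K\to A_1\to 0$. Since $Y$ and $A_1$ lie in $\mathcal{C}^\perp$ and $\mathcal{C}^\perp$ is closed under extensions, $K\in\mathcal{C}^\perp$. Thus $0\to K\to C\to A_3\to0$ is the required special $\mathcal{C}$-precover.

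\textbf{(2)} Take a special preenvelope $0\to A_2\overset{e}{\to} D\to W\to0$ with $D\in\mathcal{D}$ and $W\in{}^\perp\mathcal{D}$. Form the pushout of $e$ along $g$, or equivalently set $D'=\Coker(ef)$. This gives exact sequences $0\to A_3\to D'\to W\to0$ and $0\to A_1\to D\to D'\to0$. Since $\mathcal{D}$ is coresolving, it is closed under cokernels of monomorphisms, so $D'\in\mathcal{D}$. Hence $0\to A_3\to D'\to W\to 0$ is the required special $\mathcal{D}$-preenvelope.

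\textbf{(3)} Take $0\to Y\to C\overset{p}{\to} A_2\to0$ as in (1). Form the pullback $C'$ of $p$ along $f$. This gives exact sequences $0\to Y\to C'\to A_1\to0$ and $0\to C'\to C\to A_3\to0$. Since $\mathcal{C}$ is closed under kernels of epimorphisms and $A_3, C\in\mathcal{C}$, we get $C'\in\mathcal{C}$. Hence $0\to Y\to C'\to A_1\to 0$ is the required special $\mathcal{C}$-precover.

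\textbf{(4)} Take $0\to A_2\overset{e}{\to} D\to W\to0$ as in (2). The composite $ef$ is a monomorphism, and the snake lemma (or the $3\times3$ lemma) gives an exact sequence $0\to A_3\to \Coker(ef)\to W\to 0$. The class ${}^\perp\mathcal{D}$ is closed under extensions, and $A_3,W\in{}^\perp\mathcal{D}$, so $\Coker(ef)\in{}^\perp\mathcal{D}$. Therefore $0\to A_1\to D\to\Coker(ef)\to0$ is the required special $\mathcal{D}$-preenvelope.

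The main point needing care is the diagram chasing in (1) and (4), namely identifying the kernel and cokernel sequences correctly. Beyond that, the only step that uses more than the definitions is the closure of $\mathcal{C}^\perp$ and ${}^\perp\mathcal{D}$ under extensions. This follows at once from the long exact $\Ext$ sequence, so I expect no real obstacle.
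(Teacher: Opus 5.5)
Your proof is correct and follows the paper's route. For (1) the paper uses your pullback of the precover along $f$: the kernel of $g\rho$ is an extension of $A_1$ by $B_3$, so it lies in $\mathcal{C}^{\bot}$. For (2) it uses your pushout of the preenvelope along $g$: the cokernel of $A_1\to A_2\to D_2$ lies in $\mathcal{D}$. For (3) and (4), which the paper only calls similar, you correctly read off the other short exact sequences of the same two diagrams.

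Your preliminary dimension-shifting remark about $\Ext^i(\mathcal{C},\mathcal{C}^{\bot})$ is never used and can be dropped. Extension-closure of $\mathcal{C}^{\bot}$ and ${}^{\bot}\mathcal{D}$ is all the argument needs.
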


\begin{proof}
  We only prove (1) and (2), and the proof of (3) and (4) are similar.

  (1) Since $A_2$ has a special $\mathcal{C}$-precover,
there is a short exact sequence $\xymatrix@C=1.5pc{0\ar[r]& B_3\ar[r]^l & C_2\ar[r]^\rho &A_2 \ar[r]& 0}$ with $C_2\in \mathcal{C}$ and $B_3\in \mathcal{C}^{\bot}$. Then we have the following pullback diagram:
  $$\xymatrix@C=1.5pc{ &0\ar[d]&0\ar[d]&&\\  & B_3\ar[d]\ar@{=}[r] &B_3
			\ar[d] &  &  \\
			0\ar[r]& K\ar[r]\ar[d]& C_2\ar[r]^\sigma\ar[d]^\rho & A_3\ar[r]\ar@{=}[d]  & 0\\0\ar[r]&A_1\ar[r]^f\ar[d]&A_2\ar[r]^g\ar[d]&A_3\ar[r]&0.\\&0&0&&}$$
Since $\mathcal{C}^{\bot}$ is closed under extensions, we get $K\in\mathcal{C}^{\bot}$ by the
first column. Now the second row yields that $\sigma$ is a special $\mathcal{C}$-precover of $A_3$.

(2) Since $A_2$ has a special $\mathcal{D}$-preenvelope,
there is a short exact sequence $\xymatrix@C=1.5pc{0\ar[r]& A_2\ar[r]^i & D_2\ar[r]^\pi &B_2 \ar[r]& 0}$ with $D_2\in \mathcal{D}$ and $B_2\in {^{\bot}\mathcal{D}}$.
Then we have the following pushout diagram:
  $$\xymatrix@C=1.5pc{ &&0\ar[d]&0\ar[d]&\\ 0\ar[r] & A_1\ar@{=}[d]\ar[r]^f &A_2\ar[r]^g
			\ar[d]^i & A_3\ar[d]^\tau\ar[r] &0  \\
			0\ar[r]& A_1\ar[r]& D_2\ar[r]\ar[d] & H\ar[r]\ar[d]  & 0.\\&&B_2\ar@{=}[r]\ar[d]&B_2\ar[d]&\\&&0&0&}$$
  Since $\mathcal{D}$ is closed under cokernels of monomorphisms, we get $H\in \mathcal{D}$ from the
  second row. Now the third column yields that $\tau$ is a special $\mathcal{D}$-preenvelope of $A_3$.
\end{proof}

Now we are ready to verify the completeness of $(\mathcal{M}_{\mathcal{U^{''}}}^{\mathcal{U^{'}}}, \mathcal{N}_{\mathcal{V^{''}}}^{\mathcal{V^{'}}})$.
\begin{theorem}\label{3.8(1)}
Let $(\mathcal{A^{'}}, \mathcal{A}, \mathcal{A^{''}}, i^\ast, i_\ast, i^!, j_!, j^\ast, j_\ast)$
be a recollement satisfying condition (P). Assume that $({\mathcal{U^{'}}},\mathcal{V{'}})$ and $
({\mathcal{U}^{''}},{\mathcal{V}^{''}})$ are complete hereditary cotorsion pairs in $\mathcal{A^{'}}$ and $\mathcal{A^{''}}$, respectively.
If $(\mathbb{L}_1j_!)({\mathcal{U^{''}}})=0$ and $(\mathbb{R}_1j_*)({\mathcal{V^{''}}})=0$,
then $(\mathcal{M}_{\mathcal{U^{''}}}^{\mathcal{U^{'}}}, \mathcal{N}_{\mathcal{V^{''}}}^{\mathcal{V^{'}}})$ is a complete hereditary cotorsion pair in $\mathcal{A}$.
\end{theorem}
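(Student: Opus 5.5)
By Theorem~\ref{3.3} and Proposition~\ref{prop-two-contor-equal}, $(\mathcal{M}_{\mathcal{U^{''}}}^{\mathcal{U^{'}}}, \mathcal{N}_{\mathcal{V^{''}}}^{\mathcal{V^{'}}})$ is already a cotorsion pair. It then remains to prove heredity and completeness.

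\emph{Heredity.} This follows exactly as in the first half of the proof of Theorem~\ref{3.7}. That argument only used two facts: $(\mathbb{L}_1j_!)(j^*Z)=0$ for $j^*Z\in\mathcal{U''}$, which is our hypothesis $(\mathbb{L}_1j_!)(\mathcal{U''})=0$ since $\mathcal{U''}={}^\perp\mathcal{V''}$; and Lemma~\ref{3.4}(2) under condition (P). Hence $\mathcal{M}_{\mathcal{U^{''}}}^{\mathcal{U^{'}}}$ is closed under kernels of epimorphisms, and Proposition~\ref{he1} gives heredity. Consequently $\mathcal{C}:=\mathcal{M}_{\mathcal{U^{''}}}^{\mathcal{U^{'}}}$ is resolving and $\mathcal{D}:=\mathcal{N}_{\mathcal{V^{''}}}^{\mathcal{V^{'}}}$ is coresolving. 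Both are closed under extensions, being the two halves of a cotorsion pair. Projectives lie in $\mathcal{C}={}^\perp\mathcal{D}$ and injectives lie in $\mathcal{D}$. So Lemmas~\ref{yong1} and~\ref{yong2} are available.

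\emph{Completeness.} By Proposition~\ref{co1} it suffices to show that every $X\in\mathcal{A}$ has a special $\mathcal{C}$-precover. Here $\mathcal{C}^\perp=\mathcal{D}$, so this is a sequence $0\to D\to C\to X\to 0$. Take the sequence $0\to i_*i^!X\to X\to \Im\delta_X\to 0$ from Remark~\ref{2.4}(3). By Lemma~\ref{yong1}(1) it suffices to treat objects of the form $i_*L$ and the image $\Im\delta_X\subseteq j_*j^*X$.

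\emph{Objects $i_*L$.} Take a special $\mathcal{U'}$-precover $0\to V'\to U'\to L\to0$ in $\mathcal{A'}$ and apply the exact functor $i_*$. By Lemma~\ref{chang}, $i_*U'\in\mathcal{C}$ and $i_*V'\in\mathcal{D}$, which gives the required precover.

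\emph{Objects $j_*Y$.} Take a special $\mathcal{V''}$-preenvelope $0\to Y\to V''\to U''\to 0$. Since $(\mathbb{R}_1j_*)(Y)$ need not vanish, instead apply $j_!$ to a special $\mathcal{U''}$-precover $0\to V''\to U''\to Y\to 0$. Because $(\mathbb{L}_1j_!)(Y)$ may be nonzero, I would rather use the exact sequence $0\to i_*M'\to j_!Y\xrightarrow{\varepsilon} j_*Y\to i_*N'\to0$ coming from the natural map $j_!\to j_*$ (the norm map).

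\emph{Main obstacle.} The difficulty is getting from $j_!$ of precovers to $j_*Y$ and to $\Im\delta_X$. My plan is as follows.

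First, for $U''\in\mathcal{U''}$ the object $j_!U''$ is in $\mathcal{C}$. The kernel of $j_!U''\to j_!Y$ is a quotient of $j_!V''$ by $i_*(\mathbb{L}_1 j_!\text{-terms})$; more precisely, since $(\mathbb{L}_1j_!)(U'')=0$, the long exact sequence gives $0\to \mathbb{L}_1j_!(Y)\to j_!V''\to j_!U''\to j_!Y\to0$. Here $\mathbb{L}_1j_!(Y)=i_*(\cdot)$ because $j^*\mathbb{L}_1j_!=0$. So $j_!Y$ has a special $\mathcal{C}$-precover once $j_!V''$ modulo an $i_*$-object lies in $\mathcal{D}$, up to $i_*$-objects that are handled by the $i_*L$ case.

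Second, I would repeatedly use Lemma~\ref{yong2}(1),(3) together with the $i_*$ case to absorb the $i_*$-kernels and cokernels of $\varepsilon$ and $\delta$. Lemma~\ref{yong2}(1) handles passing to quotients by objects of $\mathcal{D}$. For a quotient by an arbitrary $i_*M'$, I would first replace $i_*M'$ by its special $\mathcal{D}$-preenvelope using Lemma~\ref{yong1}.

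The key technical point is to check that $j_!V''$ lies in $\mathcal{D}$, or differs from an object of $\mathcal{D}$ by $i_*$-objects. This is where condition (P), through Lemma~\ref{P}, is used: it ensures $\delta$ is epic on objects with $\mathbb{R}_1 i^!=0$, so that $\mathcal{D}$ is described by the sequence $0\to i_*i^!\to(-)\to j_*j^*\to0$.

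The dual argument, via Proposition~\ref{co1}(3), produces special $\mathcal{D}$-preenvelopes and serves as a consistency check.
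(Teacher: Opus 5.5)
The parts on the cotorsion pair and on heredity are fine and agree with the paper; the paper simply cites Theorem~\ref{3.7}, whose first half is the kernel-closure argument you reproduce. Your reduction of completeness to the objects $i_*L$ and $\Im\delta_X$ is also the paper's own outline (Steps~1 and~3).

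\textbf{Main gap: the $j_*$ step.} This is the heart of the proof, and you leave it as an unresolved plan built on a misreading of which vanishing is needed. Write $\mathcal{M}=\mathcal{M}_{\mathcal{U^{''}}}^{\mathcal{U^{'}}}$ and $\mathcal{N}=\mathcal{N}_{\mathcal{V^{''}}}^{\mathcal{V^{'}}}$.

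\begin{itemize}
\item You reject $j_*$ because $(\mathbb{R}_1j_*)(Y)$ need not vanish. That only matters for the pre\emph{envelope} sequence.
\item Apply $j_*$ instead to the special $\mathcal{U''}$-\emph{precover} $0\to V''\to U''\to Y\to 0$. Since $j_*$ is left exact, right exactness only needs $(\mathbb{R}_1j_*)(V'')=0$, which is exactly the hypothesis $(\mathbb{R}_1j_*)(\mathcal{V''})=0$.
\item This gives $0\to j_*V''\to j_*U''\to j_*Y\to 0$ with $j_*V''\in\mathcal{N}$ (Lemma~\ref{chang}). By Lemma~\ref{yong2}(1) it suffices to treat $j_*U''$.
\item For $j_*U''$, use $0\to i_*X'\to j_!U''\to j_*U''\to i_*i^*j_*U''\to 0$, coming from $\varepsilon_{j_*U''}$. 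Here $j_!U''$ is itself in $\mathcal{M}$.
\item Push out along a special $\mathcal{N}$-preenvelope $0\to i_*X'\to N\to M\to 0$. The resulting $W$ is an extension of $M$ by $j_!U''$, so $W\in\mathcal{M}$, and $0\to N\to W\to L\to 0$ is a special precover of the image $L$.
\item Lemma~\ref{yong1}(1) then handles the $i_*$-cokernel. No derived functor of $j_!$ enters at all.
\end{itemize}

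\textbf{Why your detour through $j_!Y$ fails.} Its key claim, that $j_!V''$ lies in $\mathcal{N}$ up to harmless $i_*$-terms, is false in general. In the Morita-ring recollement with $N=0$ one has $P_BT_AV''=M\otimes_AV''$, which need not lie in $\mathcal{V'}$; for instance, take $\mathcal{V'}$ to be the injective $B$-modules. Without that claim, $0\to Q\to j_!U''\to j_!Y\to 0$ is not a special precover, and nothing in your sketch repairs this.

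Also, condition (P) plays no role in the completeness step, contrary to what you suggest. It is used only to obtain the hereditary cotorsion pair.

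\textbf{Second gap: passing to $\Im\delta_X$.} Going from $j_*j^*X$ to the subobject $\Im\delta_X$ is not covered by ``absorbing $i_*$-cokernels''. Lemma~\ref{yong2}(3) requires the cokernel $i_*Y'$ to lie in $\mathcal{M}$. The $i_*$ case only gives $i_*Y'$ a special precover, not membership in $\mathcal{M}$.

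The paper fixes this as follows.
\begin{itemize}
\item Take a special $\mathcal{U'}$-precover $0\to V'_2\to U'_2\to Y'\to 0$ and pull back $j_*j^*X\to i_*Y'$ along $i_*U'_2\to i_*Y'$.
\item This produces $Z$ with $0\to i_*V'_2\to Z\to j_*j^*X\to 0$, so $Z$ has a special precover by Lemma~\ref{yong1}(1).
\item It also gives $0\to\Im\delta_X\to Z\to i_*U'_2\to 0$ with $i_*U'_2\in\mathcal{M}$, so Lemma~\ref{yong2}(3) now applies.
\end{itemize}
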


\begin{proof}
Since $({\mathcal{U}^{''}},{\mathcal{V}^{''}})$ is a cotorsion pair, we get $^\perp {\mathcal{V''}}=\mathcal{U''}$
and ${\mathcal{U''}}^\bot=\mathcal{V''}$.
Then we have $(\mathbb{L}_1j_!)(^\perp {\mathcal{V''}})=0$ and $(\mathbb{R}_1j_*)({\mathcal{U''}}^\bot)=0$,
and by Theorem~\ref{3.7}, we get that $(\mathcal{M}_{\mathcal{U^{''}}}^{\mathcal{U^{'}}}, \mathcal{N}_{\mathcal{V^{''}}}^{\mathcal{V^{'}}})$ is a hereditary cotorsion pair, which yields that $\mathcal{M}_{\mathcal{U^{''}}}^{\mathcal{U^{'}}}$ is resolving and $\mathcal{N}_{\mathcal{V^{''}}}^{\mathcal{V^{'}}}$ is coresolving,
see Proposition~\ref{he1}.
Now we prove the completeness of $(\mathcal{M}_{\mathcal{U^{''}}}^{\mathcal{U^{'}}}, \mathcal{N}_{\mathcal{V^{''}}}^{\mathcal{V^{'}}})$
by the following three steps.

Step 1:
For any $Y\in\mathcal{A}'$, we claim that $i_*Y$ has a special $\mathcal{M}_{\mathcal{U^{''}}}^{\mathcal{U^{'}}}$-precover
and a special $\mathcal{N}_{\mathcal{V^{''}}}^{\mathcal{V^{'}}}$-preenvelope.
Since $(\mathcal{U^{'}},\mathcal{V^{'}})$ is complete, there is a short exact sequence
$\xymatrix@C=2pc{0\ar[r]& V'_1\ar[r] &U'_1\ar[r] &Y \ar[r]& 0}$
with $V'_1\in\mathcal{V{'}}$ and $U'_1\in{\mathcal{U^{'}}}$. Applying $i_*$,
 we have the following
exact sequence
   \begin{center}
   $\xymatrix@C=2pc{0\ar[r]& i_*V'_1\ar[r] &i_*U'_1\ar[r] &i_*Y \ar[r]& 0}$,
   \end{center}
 which yields that $i_*Y$ has a special $\mathcal{M}_{\mathcal{U^{''}}}^{\mathcal{U^{'}}}$-precover
 since $i_*V'_1\in\mathcal{N}_{\mathcal{V^{''}}}^{\mathcal{V^{'}}}$ and $i_*U'_1\in\mathcal{M}_{\mathcal{U^{''}}}^{\mathcal{U^{'}}}$,
see Lemma~\ref{chang}. Dually, we can prove that $i_*Y$ has a special $\mathcal{N}_{\mathcal{V^{''}}}^{\mathcal{V^{'}}}$-preenvelope.

Step 2:
For any $X\in\mathcal{A}$, we claim that $j_*j^*X$ has a special $\mathcal{M}_{\mathcal{U^{''}}}^{\mathcal{U^{'}}}$-precover.
Indeed, since $j^*X\in\mathcal{A''}$ and $(\mathcal{U^{''}},\mathcal{V{''}})$ is complete, we have a short exact sequence
$\xymatrix@C=1.5pc{0\ar[r]& V''_1\ar[r] & U''_1\ar[r]^f &j^*X \ar[r]& 0}$  with $V''_1\in{\mathcal{V}^{''}}$ and $U''_1\in{\mathcal{U}^{''}}$.
Since $(\mathbb{R}_1j_*)({\mathcal{V{''}}})=0$, we get a short exact sequence
$$\xymatrix@C=2pc{0\ar[r]& j_*V''_1\ar[r] &j_* U''_1\ar[r]^{j_*f} &j_*j^*X \ar[r]& 0}.$$
On the other hand, it follows from Lemma~\ref{chang} that $j_*\mathcal{V{''}}\in \mathcal{N}_{\mathcal{V^{''}}}^{\mathcal{V^{'}}}$.
To show $j_*j^*X$ has a special $\mathcal{M}_{\mathcal{U^{''}}}^{\mathcal{U^{'}}}$-precover,
in view of Lemma~\ref{yong2} (1), it suffices to show that $j_*U''_1$ has a special $\mathcal{M}_{\mathcal{U^{''}}}^{\mathcal{U^{'}}}$-precover.
By Remark~\ref{2.4} (3),
there is an exact sequence
$$\xymatrix@C=2pc{0\ar[r]& i_*X'\ar[r] & j_!j^*j_*U''_1\ar[rr]^{\varepsilon _{j_* U''_1}} &&j_* U''_1\ar[r] &i_*i^*j_*U''_1 \ar[r]& 0}$$
with $X'\in \mathcal{A'}$.
Since $j_!j^*j_*U''_1\cong j_!U''_1$, we get an exact sequence
$$\xymatrix@C=2pc{0\ar[r]& i_*X'\ar[r]^f & j_!U''_1\ar[r] &j_* U''_1\ar[r] &i_*i^*j_*U''_1 \ar[r]& 0},$$
which yields two short exact sequences
$\xymatrix@C=1.5pc{0\ar[r]& i_*X'\ar[r]^f & j_!U''_1\ar[r]&L\ar[r] &0}$
    and $\xymatrix@C=1.5pc{0\ar[r]&L\ar[r]& j_* U''_1\ar[r] &i_*i^*j_*U''_1 \ar[r]& 0}$
with $L=\Coker f$. It follows from step 1 that $i_*X'$ has a special $\mathcal{N}_{\mathcal{V^{''}}}^{\mathcal{V^{'}}}$-preenvelope,
that is, there is a short exact sequence $\xymatrix@C=1.5pc{0\ar[r]& i_*X'\ar[r]^{\alpha} & N\ar[r]&M\ar[r] &0}$
with $M\in \mathcal{M}_{\mathcal{U^{''}}}^{\mathcal{U^{'}}}$ and $N\in \mathcal{N}_{\mathcal{V^{''}}}^{\mathcal{V^{'}}}$.
Then we have
the following pushout diagram:
$$\xymatrix@C=2pc{ & 0\ar[d] & 0\ar[d] & &\\
0\ar[r]& i_*X'\ar[r]^f \ar[d]_{\alpha}& j_!U''_1\ar[r]\ar[d]&L\ar[r]\ar@{=}[d] &0 \\
0\ar[r]& N\ar[r]\ar[d] & W\ar[d]\ar[r]&L\ar[r] &0.\\
& M\ar[d] \ar@{=}[r]& M\ar[d]& & \\
 & 0 & 0 & &
} $$
By Lemma~\ref{chang}, we get $j_!U''_1\in \mathcal{M}_{\mathcal{U^{''}}}^{\mathcal{U^{'}}}$,
and then we deduce $W\in \mathcal{M}_{\mathcal{U^{''}}}^{\mathcal{U^{'}}}$ from the second
column. Now
the second row yields that $L$ has a special $\mathcal{M}_{\mathcal{U^{''}}}^{\mathcal{U^{'}}}$-precover.
By step 1, we have that $i_*i^*j_*U''_1$ has a special $\mathcal{M}_{\mathcal{U^{''}}}^{\mathcal{U^{'}}}$-precover.
Applying Lemma~\ref{yong1} (1) to
$\xymatrix@C=1.5pc{0\ar[r]&L\ar[r]& j_* U''_1\ar[r] &i_*i^*j_*U''_1 \ar[r]& 0}$, we
get that $j_*U''_1$ has a special $\mathcal{M}_{\mathcal{U^{''}}}^{\mathcal{U^{'}}}$-precover.

   Step 3: For any $X\in\mathcal{A}$, it follows from Remark~\ref{2.4} (3) that there is an exact sequence
   \begin{center}
$\xymatrix@C=2pc{0\ar[r]& i_*i^!X\ar[r]^{\eta_X}& X\ar[r]^{\delta _X} & j_*j^*X\ar[r]^g &i_*Y' \ar[r]& 0}$
\end{center}
 with $Y'\in\mathcal{A'}$. Let $\Coker \eta_X=K$. Then there are two exact sequences
$\xymatrix@C=1.5pc{0\ar[r]& i_*i^!X\ar[r]^{\delta_X}& X\ar[r] &K\ar[r]& 0}$
    and $\xymatrix@C=1.5pc{0\ar[r]&K\ar[r]& j_*j^*X\ar[r]^g &i_*Y' \ar[r]& 0}$.
Since $Y'\in\mathcal{A'}$ and $(\mathcal{U^{'}},\mathcal{V^{'}})$ is complete, there is a short exact suquence
     \begin{center}
   $\xymatrix@C=2pc{0\ar[r]&V'_2\ar[r]& U'_2\ar[r]^{\beta} &Y' \ar[r]& 0}$
    \end{center}
    with $V'_2\in\mathcal{V'} $ and $U'_2\in\mathcal{U'} $. Applying $i_*$, we get the following
    exact sequence $\xymatrix@C=2pc{0\ar[r]&i_*V'_2\ar[r]& i_*U'_2\ar[r]^{i_*\beta} &i_*Y' \ar[r]& 0}$, and then
 we have pullback diagram:
  $$\xymatrix@C=2pc{ &&0\ar[d]  &0\ar[d]&\\ & & i_*V'_2 \ar[d]\ar@{=}[r] &i_*V'_2
			\ar[d] &  \\
			0\ar[r]& K\ar[r]\ar@{=}[d]& Z\ar[r]\ar[d] & i_*U'_2\ar[r]\ar[d]^{i_*\beta}  & 0\\0\ar[r]&K\ar[r]&j_*j^*X\ar[r]^g\ar[d]&i_*Y'\ar[d]\ar[r]&0.\\&&0&0&}$$
By step 1 and step 2, we get that both $i_*V'_2$ and $j_*j^*X$ have special $\mathcal{M}_{\mathcal{U^{''}}}^{\mathcal{U^{'}}}$-precovers.
Applying Lemma~\ref{yong1} (1) to the second column, we deduce that $Z$ has a special $\mathcal{M}_{\mathcal{U^{''}}}^{\mathcal{U^{'}}}$-precover.
On the other hand, it follows from Lemma~\ref{chang} that $i_*U'_2\in\mathcal{M}_{\mathcal{U^{''}}}^{\mathcal{U^{'}}}$.
Applying Lemma~\ref{yong2} (3) to the second row, we get that $K$ has a special $\mathcal{M}_{\mathcal{U^{''}}}^{\mathcal{U^{'}}}$-precover.
Applying Lemma~\ref{yong1} (1) to the exact sequence $\xymatrix@C=2pc{0\ar[r]&i_*i^!X\ar[r]& X\ar[r] &K \ar[r]& 0}$, and by step 1,
we get that $X$ has a special $\mathcal{M}_{\mathcal{U^{''}}}^{\mathcal{U^{'}}}$-precover.
Now it follows from Proposition~\ref{co1} that the cotorsion pair
$(\mathcal{M}_{\mathcal{U^{''}}}^{\mathcal{U^{'}}}, \mathcal{N}_{\mathcal{V^{''}}}^{\mathcal{V^{'}}})$ is complete.
  \end{proof}

 Motivated by \cite[Theorem 5.6]{ML}, we consider the converse of Theoren~\ref{3.8(1)}.
\begin{proposition}\label{3.16}
Let $(\mathcal{A^{'}}, \mathcal{A}, \mathcal{A^{''}}, i^\ast, i_\ast, i^!, j_!, j^\ast, j_\ast)$
be a recollement satisfying condition (P), and let ${\mathcal{U^{'}}}, \mathcal{V{'}}, \ {\mathcal{U}^{''}}$ and ${\mathcal{V}^{''}}$ be classes of objects
in $\mathcal{A^{'}}$ and $\mathcal{A^{''}}$, respectively. Assume that
$(\mathcal{M}_{\mathcal{U^{''}}}^{\mathcal{U^{'}}}, \mathcal{N}_{\mathcal{V^{''}}}^{\mathcal{V^{'}}})$ is a complete hereditary cotorsion pair in $\mathcal{A}$,
$(\mathbb{R}_1j_*)({\mathcal{U^{''}}}^\bot)=0$, $(\mathbb{L}_1j_!)(^{\bot}{\mathcal{V^{''}}})=0$,
and either $(\mathbb{L}_1j_!)({\mathcal{U^{''}}})=0$ or $(\mathbb{R}_1j_*)({\mathcal{V^{''}}})=0$.
Then both $({\mathcal{U^{'}}},\mathcal{V{'}})$ and $ ({\mathcal{U}^{''}},{\mathcal{V}^{''}})$ are complete hereditary cotorsion pairs
if one of the following two conditions holds:

{\rm (1)} $i^*j_*{\mathcal{V^{''}}}\subseteq {\mathcal{V^{'}}}$ and $(\mathbb{L}_1i^*)j_*{\mathcal{V^{''}}}=0$;

{\rm (2)} $i^!j_!{\mathcal{U}^{''}}\subseteq{\mathcal{U^{'}}}$ and $(\mathbb{R}_1i^!)j_!{\mathcal{U}^{''}}=0$.
\end{proposition}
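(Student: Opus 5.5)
The plan is in two parts. Heredity and the cotorsion-pair property come directly from Theorem~\ref{3.7}. Completeness is checked through Proposition~\ref{co1}, by applying $i^*$, $i^!$ or $j^*$ to approximation sequences in $\mathcal{A}$ and using conditions (1)/(2) to keep the relevant sequences exact.

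\textbf{Step 1 (cotorsion pairs and heredity).} The hypotheses are exactly those of Theorem~\ref{3.6}. Since $(\mathcal{M}_{\mathcal{U^{''}}}^{\mathcal{U^{'}}}, \mathcal{N}_{\mathcal{V^{''}}}^{\mathcal{V^{'}}})$ is a hereditary cotorsion pair, the converse direction of Theorem~\ref{3.7} shows that $(\mathcal{U^{'}},\mathcal{V^{'}})$ and $(\mathcal{U^{''}},\mathcal{V^{''}})$ are hereditary cotorsion pairs. In particular $\mathcal{U'}$ and $\mathcal{V'}$ are closed under extensions.

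\textbf{Step 2 (completeness in $\mathcal{A}''$).} No extra condition is needed here. Let $Y\in\mathcal{A}''$. Completeness in $\mathcal{A}$ gives a short exact sequence $0\to N\to M\to j_!Y\to 0$ with $M\in\mathcal{M}_{\mathcal{U^{''}}}^{\mathcal{U^{'}}}$ and $N\in\mathcal{N}_{\mathcal{V^{''}}}^{\mathcal{V^{'}}}$. Since $j^*$ is exact and $j^*j_!Y\cong Y$, we get
$$0\to j^*N\to j^*M\to Y\to 0,$$
with $j^*M\in\mathcal{U''}$ and $j^*N\in\mathcal{V''}$ by definition of the two classes. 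Proposition~\ref{co1} then gives completeness of $(\mathcal{U^{''}},\mathcal{V^{''}})$.

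\textbf{Step 3 (completeness in $\mathcal{A}'$, condition (1)).} Let $Y'\in\mathcal{A}'$ and take $0\to N\to M\to i_*Y'\to 0$ as above.
\begin{itemize}
\item Since $(\mathbb{L}_1i^*)(i_*Y')=0$ by \cite[Proposition 4.7]{FV} and $i^*i_*\cong 1$, applying $i^*$ gives an exact sequence $0\to i^*N\to i^*M\to Y'\to 0$, with $i^*M\in\mathcal{U'}$.
\item It remains to show $i^*N\in\mathcal{V'}$. Since $(\mathbb{R}_1i^!)N=0$, Lemma~\ref{P} and Lemma~\ref{3.4} show that $\delta_N$ is epic. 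This gives $0\to i_*i^!N\to N\to j_*j^*N\to 0$.
\item Apply $i^*$ to this sequence. Because $j^*N\in\mathcal{V''}$, condition (1) gives $(\mathbb{L}_1i^*)(j_*j^*N)=0$. Hence
$$0\to i^!N\to i^*N\to i^*j_*j^*N\to 0$$
is exact.
\item Here $i^!N\in\mathcal{V'}$, and $i^*j_*j^*N\in\mathcal{V'}$ by (1). By extension-closure, $i^*N\in\mathcal{V'}$, and Proposition~\ref{co1} applies.
\end{itemize}

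\textbf{Step 3$'$ (condition (2), dual).} Take a special preenvelope $0\to i_*Y'\to N\to M\to 0$.
\begin{itemize}
\item Since $(\mathbb{R}_1i^!)(i_*Y')=0$ (the dual statement in \cite[Proposition 4.7]{FV}), applying $i^!$ gives $0\to Y'\to i^!N\to i^!M\to 0$, with $i^!N\in\mathcal{V'}$.
\item For $M$, condition (P) and Lemma~\ref{3.4} give $\varepsilon_M$ monic. So $0\to j_!j^*M\to M\to i_*i^*M\to 0$ is exact.
\item Applying $i^!$ and using $(\mathbb{R}_1i^!)(j_!j^*M)=0$ (from (2), since $j^*M\in\mathcal{U''}$), we get
$$0\to i^!j_!j^*M\to i^!M\to i^*M\to 0.$$
Both ends lie in $\mathcal{U'}$, so $i^!M\in\mathcal{U'}$.
\end{itemize}

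\textbf{Main obstacle.} The delicate point is the exactness bookkeeping: the vanishing of $\mathbb{L}_1i^*$ and $\mathbb{R}_1i^!$ on $i_*Y'$, together with obtaining the short exact sequences from condition (P) via Lemmas~\ref{P} and~\ref{3.4}. Everything else reduces to extension-closure of $\mathcal{U'}$ and $\mathcal{V'}$.
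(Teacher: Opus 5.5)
Your proposal is correct and follows essentially the paper's proof. Heredity and the cotorsion-pair property come from Theorems~\ref{3.6} and~\ref{3.7}, and completeness comes from applying $j^*$, $i^*$ or $i^!$ to approximation sequences in $\mathcal{A}$. The key memberships $i^*N\in\mathcal{V'}$ (resp.\ $i^!M\in\mathcal{U'}$) are obtained exactly as in the paper, from the short exact sequence given by $\delta_N$ (resp.\ $\varepsilon_M$) together with condition (1) (resp.\ (2)) and extension-closure.

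The only difference is that in each step you use the opposite approximation from the paper. For example, you take special precovers of $j_!Y$ and $i_*Y'$ where the paper takes special preenvelopes of $j_*X$ and $i_*X$, and you reverse this for (2). As a result you invoke $(\mathbb{L}_1i^*)i_*=0$ and $(\mathbb{R}_1i^!)i_*=0$, which the paper already uses in Lemma~\ref{chang}, instead of the vanishing conditions built into $\mathcal{M}_{\mathcal{U''}}^{\mathcal{U'}}$ and $\mathcal{N}_{\mathcal{V''}}^{\mathcal{V'}}$. This change is harmless.
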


\begin{proof}
It follows from Theorem~\ref{3.6} and Theorem~\ref{3.7} that both $({\mathcal{U^{'}}},\mathcal{V{'}})$ and $ ({\mathcal{U}^{''}},{\mathcal{V}^{''}})$ are hereditary cotorsion pairs.
Now we prove $ ({\mathcal{U}^{''}},{\mathcal{V}^{''}})$ is complete.
For any $X\in \mathcal{A^{''}}$, since $j_*X\in \mathcal{A}$
and $(\mathcal{M}_{\mathcal{U^{''}}}^{\mathcal{U^{'}}}, \mathcal{N}_{\mathcal{V^{''}}}^{\mathcal{V^{'}}})$ is complete, we have
a short exact sequence
$\xymatrix@C=1.5pc{0\ar[r]& j_*X\ar[r] & N\ar[r]^f &M \ar[r]& 0}$ with $N\in\mathcal{N}_{\mathcal{V^{''}}}^{\mathcal{V^{'}}}$ and $M\in\mathcal{M}_{\mathcal{U^{''}}}^{\mathcal{U^{'}}}$. Then we get $j^*N\in \mathcal{V''}$ and $j^*M\in \mathcal{U''}$,
and we have a short exact sequence $\xymatrix@C=1.5pc{0\ar[r]& X\ar[r] & j^*N\ar[r]^{j^*f} &j^*M \ar[r]& 0}.$
This implies that the cotorsion pair $ ({\mathcal{U}^{''}},{\mathcal{V}^{''}})$ is complete.

Next we prove the completeness of $({\mathcal{U^{'}}},\mathcal{V{'}})$ under the condition (1).
For any $X\in \mathcal{A^{'}}$, since $i_*X\in \mathcal{A}$
and $(\mathcal{M}_{\mathcal{U^{''}}}^{\mathcal{U^{'}}}, \mathcal{N}_{\mathcal{V^{''}}}^{\mathcal{V^{'}}})$ is complete, we have
a short exact sequence
$\xymatrix@C=1.5pc{0\ar[r]& i_*X\ar[r] & W\ar[r]^f &Q \ar[r]& 0}$ with $W\in\mathcal{N}_{\mathcal{V^{''}}}^{\mathcal{V^{'}}}$ and $Q\in\mathcal{M}_{\mathcal{U^{''}}}^{\mathcal{U^{'}}}$. Then we have
$i^!W\in \mathcal{V'}$, $j^*W\in \mathcal{V''}$, $(\mathbb{R}_1i^!)W=0$,
$i^*Q\in \mathcal{U'}$ and $(\mathbb{L}_1i^*)Q=0$.
Applying $i^*$, we have a short exact sequence
  $\xymatrix@C=1.5pc{0\ar[r]& X\ar[r] & i^*W\ar[r]^{i^*f} &i^*Q \ar[r]& 0}$.
Since $i^*Q\in \mathcal{U'}$, to verify the completeness of $({\mathcal{U^{'}}},\mathcal{V{'}})$,
it remains to show that $i^*W\in \mathcal{V{'}}$.
On the other hand, it follows from Lemma~\ref{3.4} that $\delta _W$
is an epimorphism, and by Remark~\ref{2.4} (3),
there is a short exact sequence $$\xymatrix@C=2.5pc{0\ar[r]& i_*i^!W\ar[r] & W\ar[r]^{\delta _W} &j_*j^*W \ar[r]& 0}.$$
Since $j^*W\in \mathcal{V''}$ and $(\mathbb{L}_1i^*)j_*\mathcal{V''}=0$,
the sequence $$\xymatrix@C=2pc{0\ar[r]& i^!W\ar[r] & i^*W\ar[rr]^{i^*(\delta _W)} &&i^*j_*j^*W \ar[r]& 0}$$ is also exact.
Since $i^!W\in \mathcal{V'}$ and $i^*j_*j^*W\in i^*j_*\mathcal{V''}\subseteq \mathcal{V'}$,
we get $i^*W\in \mathcal{V{'}}$ by the above exact sequence, and then the cotorsion pair $({\mathcal{U^{'}}},\mathcal{V{'}})$ is complete.
If condition (2) holds, then the completeness of $({\mathcal{U^{'}}},\mathcal{V{'}})$ can be verified in the same manner-starting instead with the special $\mathcal{M}_{\mathcal{U^{''}}}^{\mathcal{U^{'}}}$-precover of $i_*X$ and applying $i^!$.
\end{proof}

\section{Applications and examples}\label{4}
\indent\indent  As shown in Section~\ref{Gluing cotorsion pairs along recollements}, the condition (P)
imposed on the recollement
plays a crucial role. In this section, we first provide
a sufficient condition for a recollement to satisfy the condition (P), and then we
apply our main results to Morita context rings
and triangular matrix rings.

\begin{proposition}\label{pro-exact-recoll}
Let $(\mathcal{A^{'}}, \mathcal{A}, \mathcal{A^{''}}, i^\ast, i_\ast, i^!, j_!, j^\ast, j_\ast)$ be a recollement
of abelian categories. Then it satisfies the condition (P) if $i^!$ or $i^*$ is exact.
\end{proposition}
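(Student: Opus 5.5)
The plan is to treat the two cases separately, using Lemma~\ref{P} to pass between them. That lemma says condition (P) is equivalent to $\delta_I$ being an epimorphism for every injective $I$. So it suffices to show:
\begin{itemize}
\item if $i^*$ is exact, then $\varepsilon_P$ is a monomorphism for every projective $P$;
\item if $i^!$ is exact, then $\delta_I$ is an epimorphism for every injective $I$.
\end{itemize}
The two arguments are dual, so I will do the $i^*$ case carefully and dualize.

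Assume $i^*$ is exact, so $\mathbb{L}_1i^*=0$ on all of $\mathcal{A}$. I would like to invoke the converse in Lemma~\ref{3.4}(2), but that converse itself assumes (P), so I argue directly. Take a projective $P$. By Remark~\ref{2.4}(3) there is an exact sequence
$$0\rightarrow i_*M'\rightarrow j_!j^*P\overset{\varepsilon_P}{\rightarrow} P\rightarrow i_*i^*P\rightarrow 0 .$$
Split it at $K=\Im\varepsilon_P$, giving $0\to i_*M'\to j_!j^*P\to K\to 0$ and $0\to K\to P\to i_*i^*P\to 0$.

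First apply the exact functor $i^*$ to the second sequence. Since $i^*i_*\cong 1$ and $P\to i_*i^*P$ is the unit, $i^*P\to i^*i_*i^*P$ is an isomorphism. Hence $i^*K=0$.

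Next apply $i^*$ to the first sequence. By Remark~\ref{2.4}(4), $i^*j_!=0$, so exactness of $i^*$ gives
$$0\rightarrow i^*i_*M'\rightarrow i^*j_!j^*P=0 .$$
Thus $M'\cong i^*i_*M'=0$, so $\varepsilon_P$ is a monomorphism. This already proves (P) in this case, without Lemma~\ref{P}. Projectivity of $P$ is not even used: $\varepsilon_X$ is monic for every $X$.

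For $i^!$ exact, dualize. Use the sequence $0\to i_*i^!X\to X\overset{\delta_X}{\to} j_*j^*X\to i_*N'\to 0$ and split it at $C=\Coker(i_*i^!X\to X)$, giving $0\to i_*i^!X\to X\to C\to 0$ and $0\to C\to j_*j^*X\to i_*N'\to 0$.
\begin{itemize}
\item Applying $i^!$ to the first sequence, the map $i^!i_*i^!X\to i^!X$ is an isomorphism, so exactness gives $i^!C=0$.
\item Applying $i^!$ to the second sequence and using $i^!j_*=0$ gives $0\to 0\to i^!i_*N'\to i^!C=0$, so $N'\cong i^!i_*N'=0$.
\end{itemize}
Hence $\delta_X$ is an epimorphism for every $X$, in particular for every injective $I$. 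Lemma~\ref{P} then gives (P).

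The main point to check is the isomorphism claims: that $i^*$ applied to the unit $P\to i_*i^*P$ is an isomorphism, and dually that $i^!$ applied to the counit $i_*i^!X\to X$ is an isomorphism. Both follow from the triangle identities, since $i^*i_*\cong 1$ (respectively $i^!i_*\cong 1$) via the counit (respectively unit), making the relevant composites identities. I expect the rest to be routine.
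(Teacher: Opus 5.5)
Your proof is correct and has the same structure as the paper's. For $i^*$ exact you show $\varepsilon_P$ is monic directly. For $i^!$ exact you show $\delta_I$ is epic and then apply Lemma~\ref{P}.

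The difference is that the paper cites \cite[Proposition 3.3, Lemma 3.1]{HJS} for the $i^*$ case and states the dual fact for $i^!$ without argument. You instead derive both facts in a few lines from the four-term sequences of Remark~\ref{2.4}(3), together with $i^*j_!=0$ and $i^!j_*=0$. This makes the proposition self-contained. It also shows that $\varepsilon_X$ is monic, respectively $\delta_X$ is epic, for every object $X$, not only for projectives or injectives.

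Two cosmetic remarks:
\begin{itemize}
\item The steps establishing $i^*K=0$ and $i^!C=0$ are not needed. Exactness of $i^*$ applied to the monomorphism $i_*M'\to j_!j^*X$ already gives a monomorphism $i^*i_*M'\to i^*j_!j^*X=0$. Dually, exactness of $i^!$ applied to the epimorphism $j_*j^*X\to i_*N'$ already gives an epimorphism $0=i^!j_*j^*X\to i^!i_*N'$.
\item Your display ``$0\to 0\to i^!i_*N'\to i^!C=0$'' has its terms in the wrong order. The sequence you actually obtain is $0\to i^!C\to i^!j_*j^*X=0\to i^!i_*N'\to 0$, and the conclusion $N'=0$ follows from it.
\end{itemize}
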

\begin{proof}
If $i^*$ is exact, then it follows from \cite[Proposition 3.3, Lemma 3.1]{HJS} that $\varepsilon_P$ is a monomorphism for any $ P\in \mathcal{P_{\mathcal{A}}}$.
Also, if $i^!$ is exact, then $\delta_I$ is an epimorphism for any $ I\in \mathcal{I_{\mathcal{A}}}$.
Due to Lemma~\ref{P}, we get that $\varepsilon_P$ is a monomorphism for any $ P\in \mathcal{P_{\mathcal{A}}}$.
\end{proof}

 Applying Theorem~\ref{3.8(1)} and Proposition~\ref{pro-exact-recoll}, we have the following corollary.
\begin{corollary}\label{cor-ecact-recoll}
Let $(  \mathcal{A^{'}},\mathcal{A},\mathcal{A^{''}},i^\ast,i_\ast,i^!,j_!,j^\ast,j_\ast)$
be a recollement of abelian categories. Assume that $({\mathcal{U^{'}}},\mathcal{V{'}})$ and $
({\mathcal{U}^{''}},{\mathcal{V}^{''}})$ are complete hereditary cotorsion pairs in $\mathcal{A^{'}}$ and $\mathcal{A^{''}}$, respectively.
Then
   $(\mathcal{M}_{\mathcal{U^{''}}}^{\mathcal{U^{'}}}, \mathcal{N}_{\mathcal{V^{''}}}^{\mathcal{V^{'}}})$ is a complete hereditary cotorsion pair in $\mathcal{A}$
if one of the
following conditions holds:

 {\rm (1)} $i^!$ is exact and $(\mathbb{L}_1j_!)({\mathcal{U^{''}}})=0$;

 {\rm (2)} $i^*$ is exact and $(\mathbb{R}_1j_*)({\mathcal{V^{''}}})=0$;

 {\rm (3)} both $i^!$ and $i^*$ are exact.
\end{corollary}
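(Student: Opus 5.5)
The plan is to reduce Corollary~\ref{cor-ecact-recoll} to Theorem~\ref{3.8(1)}. Proposition~\ref{pro-exact-recoll} already shows that the recollement satisfies condition (P) whenever $i^!$ or $i^*$ is exact. In each of the three cases it therefore remains only to verify the two vanishing hypotheses $(\mathbb{L}_1j_!)(\mathcal{U''})=0$ and $(\mathbb{R}_1j_*)(\mathcal{V''})=0$ of Theorem~\ref{3.8(1)}.

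The key input is the fact recalled in the introduction (see \cite{FZ17, TO79}): exactness of $i^!$ implies exactness of $j_*$, and exactness of $i^*$ implies exactness of $j_!$. Since $\mathcal{A''}$ has enough projectives and injectives, an exact $j_*$ has $\mathbb{R}_1j_*=0$ on all objects, and an exact $j_!$ has $\mathbb{L}_1j_!=0$ on all objects.

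The three cases then go as follows.

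\emph{Case (1).} $i^!$ is exact, so $j_*$ is exact and $(\mathbb{R}_1j_*)(\mathcal{V''})=0$. Together with the assumed $(\mathbb{L}_1j_!)(\mathcal{U''})=0$ and condition (P), Theorem~\ref{3.8(1)} applies.

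\emph{Case (2).} $i^*$ is exact, so $j_!$ is exact and $(\mathbb{L}_1j_!)(\mathcal{U''})=0$. Together with the assumed $(\mathbb{R}_1j_*)(\mathcal{V''})=0$ and condition (P), Theorem~\ref{3.8(1)} again applies.

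\emph{Case (3).} Both $i^!$ and $i^*$ are exact, so both $j_*$ and $j_!$ are exact, and both vanishing conditions hold automatically.

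The only point needing care is the cited implication that exactness of $i^!$ (resp.\ $i^*$) forces exactness of $j_*$ (resp.\ $j_!$). If a self-contained argument is wanted, I would prove it as follows. Take a short exact sequence $0\to Y_1\to Y_2\to Y_3\to 0$ in $\mathcal{A''}$ and apply the left exact $j_*$; the cokernel $C$ of $j_*Y_2\to j_*Y_3$ satisfies $j^*C=0$, so $C\cong i_*C'$ for some $C'\in\mathcal{A'}$. Now apply the exact functor $i^!$ to the exact sequence $j_*Y_2\to j_*Y_3\to i_*C'\to 0$. Since $i^!j_*=0$ and $i^!i_*\cong 1_{\mathcal{A'}}$, this gives $C'=0$, so $C=0$ and $j_*$ is exact. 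The statement for $i^*$ and $j_!$ is dual, using $i^*j_!=0$. Everything else is a direct invocation of the earlier results.
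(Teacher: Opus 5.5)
Your proposal is correct and matches the paper's proof: the paper likewise cites \cite[Lemma 3.1, Lemma 3.2]{FZ17} to get exactness of $j_*$ (resp.\ $j_!$) from exactness of $i^!$ (resp.\ $i^*$), and then combines Proposition~\ref{pro-exact-recoll} with Theorem~\ref{3.8(1)}. Your self-contained check of that exactness transfer is also valid and only replaces the citation. In it, the cokernel is killed by $j^*$, hence lies in $\Im i_*$, and the exact $i^!$ then forces it to vanish because $i^!j_*=0$ and $i^!i_*\cong 1_{\mathcal{A'}}$.
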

\begin{proof}
By \cite[Lemma 3.1, Lemma 3.2]{FZ17}, the exactness of $i^!$ (resp. $i^*$) implies the exactness of $j_*$
(resp. $j_!$). Now this corollary follows from Theorem~\ref{3.8(1)} and Proposition~\ref{pro-exact-recoll}.
\end{proof}

\begin{remark}\label{rek-exact-recoll}
{ \rm (1) Corollary~\ref{cor-ecact-recoll} (1) is contained in \cite[Theorem 1.1]{HJS}, where the authors
glue complete hereditary cotorsion pairs along recollement of exact categories under the assumption
that $i^!$ is exact and $j_!$ is $\mathcal{U''}$-exact.
We mention that their condition $j_!$ is $\mathcal{U''}$-exact is equivalent to $(\mathbb{L}_1j_!)({\mathcal{U^{''}}})=0$,
and their symbols $\mathcal{M}_{\mathcal{U^{''}}}^{\mathcal{U^{'}}}$ and $\mathcal{N}_{\mathcal{V^{''}}}^{\mathcal{V^{'}}}$
coincide with ours as $i^!$ is exact here, and the counit $\varepsilon _X$ is a monomorphism precisely when $(\mathbb{L}_1i^*)(X)=0$, by Lemma~\ref{3.4}.

(2) Corollary~\ref{cor-ecact-recoll} (3) is considered in \cite[Corollary 3.3]{CW}.
}
\end{remark}

Now, we will recall some facts on Morita rings. The reader is referred to \cite{ELG} for more details.
Let $A$ and $B$ be rings, $_BM_A$ a $B$-$A$-bimodule, $_AN_B$ a $A$-$B$-bimodule, $\phi: M\otimes_AN\rightarrow B$ a $B$-bimodule map, and $\psi: N\otimes_B M\rightarrow A$ an $A$-bimodule map, such that
$$m'
   \psi(n\otimes_B
    m)=\phi(m' \otimes_A n)m,$$
     $$n' \phi(m\otimes_An)=\psi(n' \otimes_B m)n, \ \forall \ m,m'\in M, \ \forall \ n,n'\in N.$$

A {\it Morita ring} is $\Lambda=\Lambda_{(\phi,\psi)}:=\left( \begin{array} {cc}
	A	&_AN_B  \\
_BM_A	& B
	\end{array}\right)$, with componentwise addition, and multiplication \begin{center}
$\left( \begin{array} {cc}
	a	&n  \\
m	& b
	\end{array}\right)\left( \begin{array} {cc}
	a'	&n' \\
m'& b'
	\end{array}\right)=\left( \begin{array} {cc}
	aa'+\psi(n\otimes_Bm')	&an'+nb' \\
ma'+bm'	& \phi(m\otimes_An')+bb'
	\end{array}\right).$
\end{center}

A left $\Lambda$-module is identified with a quadruple $\left( \begin{array} {c}
	X\\
Y
	\end{array}\right)_{f,g}$, where $X\in A$-$\Mod$, $Y\in B$-$\Mod$, $f\in\Hom_B(M\otimes_AX,Y)$ and $g\in\Hom_A(N\otimes_BY,X)$ such that
 $$g(n\otimes_Bf(m\otimes_Ax))=\psi(n\otimes_Bm)x,$$
$$f(m\otimes_Ag(n\otimes_By))=\phi(m\otimes_An)y, \ \forall \ m\in M, \ n\in N,\  x\in X, \ y\in Y.$$
A homomorphism of $\Lambda$-module from $\left( \begin{array} {c}
	X\\
Y
	\end{array}\right)_{f,g}$ to $\left( \begin{array} {c}
	X'\\
Y'
	\end{array}\right)_{f',g'}$ is a pair
 ${\scriptsize\left( \begin{array} {c}
	a\\
b
	\end{array}\right)}$,
where $a\in \Hom _A(X,X')$ and $b\in \Hom _B(Y,Y')$ such that the following diagrams are commutative:
\noindent $$\xymatrix@C=2pc{M\otimes_AX\ar[r]^{1_M\otimes a}\ar[d]^{f}&M\otimes_AX^{'}\ar[d]^{f^{'}}
\\
Y\ar[r]^{b}&Y^{'},}\ \ \
 \xymatrix@C=2pc{N\otimes_BY\ar[r]^{1_N\otimes b}\ar[d]^{g}&N\otimes_BY^{'}\ar[d]^{g^{'}}
\\
X\ar[r]^{a}&X^{'}.}
$$

Let $\eta_{X,Y}:\Hom_B(M\otimes_AX,Y)\cong \Hom_A(X,\Hom_B(M,Y))$ and $\eta'_{Y,X}:\Hom_A(N\otimes_BY,X)\cong \Hom_B(Y,\Hom_A(N,X))$ be the adjunction isomorphisms. For $f\in\Hom_B(M\otimes_AX,Y)$ and $g\in\Hom_A(N\otimes_BY,X)$, put $\tilde{f}=\eta_{X,Y}(f)$ and $\tilde{g}=\eta'_{Y,X}(g)$.

Denote by $\Psi_X$ the composition $\xymatrix@C=2pc{N\otimes_{B} M\otimes_{A} X\ar[r]^(0.6){\psi\otimes1_X}& A\otimes_{A}X\ar[r]^(0.6){\cong} &X}$, and denote by $\Phi_Y$ the composition $\xymatrix@C=2pc{M\otimes_{A} N\otimes_{B} Y\ar[r]^(0.6){\phi\otimes1_Y}& B\otimes_{B}Y\ar[r]^(0.6){\cong} &Y}$.
Let $\delta:1_{A\text{-}\Mod}\longrightarrow \Hom_B(M,M\otimes_A-)$ and $\epsilon:M\otimes_A \Hom_B(M,-)\longrightarrow 1_{B\text{-}\Mod}$ be the unit and the counit of the adjoint pair $(M\otimes_A-,\Hom_B(M,-))$, respectively. Let $\delta ':1_{B\text{-}\Mod}\longrightarrow \Hom_A(N,N\otimes_B-)$
and $\epsilon ':N\otimes_B \Hom_A(N,-)\longrightarrow 1_{A\text{-}\Mod}$ be the unit and the counit of the adjoint pair $(N\otimes_B-,\Hom_A(N,-))$, respectively.
By \cite[Proposition 2.4]{ELG}, there is a recollement
induced by the idempotent element ${\scriptsize\left( \begin{array} {cc}
	1_A	&0  \\
0	& 0
	\end{array}\right)}$:
$$\xymatrix@!=8pc{ (B/\Im\phi)\text{-}\Mod \ar[r] |{Z_B}& {\Lambda_{\left( \phi,\psi\right) }\text{-}\Mod }\ar@<-3ex>[l]_{Q_B}
				\ar@<+3ex>[l]^{P_B} \ar[r]|{U_A} \ar[r]&  A\text{-}\Mod
				\ar@<-3ex>[l]_{T_A} \ar@<+3ex>[l]^{H_A}},$$ where
$Q_B$ is given by $\left( \begin{array} {c}
	X\\
Y
	\end{array}\right)_{f,g}\mapsto\  {_{B/\Im\phi}(B/\Im\phi)}\otimes_B \Coker f$;
$Z_B$ is given by ${_{B/\Im\phi}}Y\mapsto \left( \begin{array} {c}
	0\\
_BY
	\end{array}\right)_{0,0}$;
$P_B$ is given by $\left( \begin{array} {c}
	X\\
Y
	\end{array}\right)_{f,g}\mapsto {_{B/\Im\phi}}(B/\Im\phi)\otimes_B \Ker \tilde{g}$;
$T_A$ is given by $X\mapsto \left( \begin{array} {c}
	X\\
M\otimes_A X
	\end{array}\right)_{1,\Psi_X}$; $U_A$ is given by $\left( \begin{array} {c}
	X\\
Y
	\end{array}\right)_{f,g}\mapsto X$; $H_A$ is given by $X\mapsto\left( \begin{array} {c}
	X\\
\Hom_A(N,X)
	\end{array}\right)_{\widetilde{\Psi_X},\epsilon '_X}$ with $\widetilde{\Psi_X}=\Hom_A(N,\Psi_X)\circ\delta '_{M\otimes_A X}$.
Moreover, there is another recollement induced by the idempotent element ${\scriptsize\left( \begin{array} {cc}
	0	&0  \\
0	& 1_B
	\end{array}\right)}$:
$$\xymatrix@!=8pc{  (A/\Im\psi)\text{-}\Mod \ar[r] |{Z_A}&  {\Lambda_{\left( \phi,\psi\right) }\text{-}\Mod }\ar@<-3ex>[l]_{Q_A}
				\ar@<+3ex>[l]^{P_A} \ar[r]|{U_B} \ar[r]& B\text{-}\Mod
				\ar@<-3ex>[l]_{T_B} \ar@<+3ex>[l]^{H_B}}, $$
where $Q_A$ is given by $\left( \begin{array} {c}
	X\\
Y
	\end{array}\right)_{f,g}\mapsto \ {_{A/\Im\psi}(A/\Im\psi)}\otimes_A\Coker g$;
$Z_A$ is given by ${_{A/\Im\psi}}X\mapsto \left( \begin{array} {c}
	_AX\\
0
	\end{array}\right)_{0,0}$; $P_A$ is given by $\left( \begin{array} {c}
	X\\
Y
	\end{array}\right)_{f,g}\mapsto \ {_{A/\Im\psi}(A/\Im\psi)}\otimes_A\Ker \tilde{f}$;
$T_B$ is given by $Y\mapsto \left( \begin{array} {c}
	N\otimes_BY\\
Y
	\end{array}\right)_{\Phi_Y,1}$; $U_B$ is given by $\left( \begin{array} {c}
	X\\
Y
	\end{array}\right)_{f,g}\mapsto Y$;
$H_B$ is given by $Y\mapsto \left( \begin{array} {c}
	\Hom_B(M,Y)\\
Y
	\end{array}\right)_{\epsilon_Y,\widetilde{\Phi_Y}}$ with $\widetilde{\Phi_Y}=\Hom_B(M,\Phi_Y) \circ\delta_{N\otimes_BY}$.

To verify whether the above two recollements satisfy the condition (P), we require the following lemma.
Let $A$ be a ring and $e\in A$ be an idempotent element. Then there is a recollement
($(A/AeA)$-$\Mod$, $A$-$\Mod$, $eAe$-$\Mod$, $i^\ast,i_\ast,i^!,j_!,j^\ast,j_\ast)$
induced by $e$, where $i^*=A/AeA\otimes _A-$, $i_*=A/AeA\otimes _{A/AeA}-$, $i^!=\Hom _A(A/AeA, -)$,
$j_!=Ae\otimes_{eAe}-$, $j^*=eA\otimes _A-$ and $j_*=\Hom _{eAe}(eA, -)$, see \cite[Example 2.7]{Psa14}.

\begin{lemma}\label{idem-recollement} The recollement induced by $e$ satisfies the condition (P) if and only if the canonical map
$Ae\otimes_{eAe}eA\rightarrow A$ is a monomorphism.
\end{lemma}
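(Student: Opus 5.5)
Condition (P) asks that $\varepsilon_P$ be a monomorphism for every projective left $A$-module $P$. For the recollement induced by $e$ we have $j_!j^*P=Ae\otimes_{eAe}eA\otimes_A P$. Under the isomorphism $eA\otimes_A P\cong eP$, the counit $\varepsilon_P$ becomes the multiplication map $Ae\otimes_{eAe}eP\to P$, $ae\otimes ep\mapsto aep$.

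In particular, for $P=A$ the counit $\varepsilon_A$ is exactly the canonical map $\mu: Ae\otimes_{eAe}eA\rightarrow A$. This gives the direction ``only if'' at once, since $A$ is projective.

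For the converse, assume $\mu$ is a monomorphism. The key observation is that $\varepsilon$ is natural and that $j_!j^*=(Ae\otimes_{eAe}eA)\otimes_A-$ is an additive functor commuting with arbitrary direct sums. Both $j_!$ and $j^*$ are left adjoints, hence preserve coproducts.

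First I check that $\varepsilon_F$ is injective for a free module $F=A^{(I)}$. Under the identifications $j_!j^*(A^{(I)})\cong (j_!j^*A)^{(I)}$, naturality identifies $\varepsilon_{A^{(I)}}$ with $\mu^{(I)}$. A direct sum of monomorphisms of modules is a monomorphism, so $\varepsilon_F$ is injective.

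Next I pass to an arbitrary projective $P$. Write $P\oplus Q\cong F$ with $F$ free. By additivity, $\varepsilon_F=\varepsilon_P\oplus\varepsilon_Q$, using naturality with respect to the inclusions and projections. Hence $\varepsilon_P$ is a restriction of $\varepsilon_F$ to a direct summand and is therefore injective.

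The only point needing care is verifying that the counit of $(j_!,j^*)$ really is the multiplication map under the stated identifications. This is the main, if routine, obstacle. I would check it directly: the unit $Y\to eAe\text{-}$-side map $Y\mapsto eA\otimes_A(Ae\otimes_{eAe}Y)\cong eAe\otimes_{eAe}Y\cong Y$ is the identity-type isomorphism, and the triangle identity then forces $\varepsilon_X(ae\otimes ex)=aex$. Naturality of $\varepsilon$ in $X$ is then standard, so the reduction to $X=A$ goes through.
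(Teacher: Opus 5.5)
Your proposal is correct and follows essentially the same route as the paper. For ``only if'' you identify $\varepsilon_A$ with the multiplication map $Ae\otimes_{eAe}eA\to A$; for ``if'' you use that $j_!j^*$ preserves coproducts, so $\varepsilon_{A^{(I)}}$ is a direct sum of copies of that map and hence injective. Beyond the paper, you also spell out the identification of the counit with multiplication and the passage from free modules to arbitrary projectives via direct summands, both of which the paper leaves implicit.
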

\begin{proof}
The counit $\varepsilon _A: j_!j^*A\rightarrow A$ is just the canonical map
$Ae\otimes_{eAe}eA\rightarrow A$. Therefore, if the recollement
satisfies the condition (P), then the canonical map
$Ae\otimes_{eAe}eA\rightarrow A$ is a monomorphism.
Conversely,
if the canonical map
$Ae\otimes_{eAe}eA\rightarrow A$ is a monomorphism, then $\varepsilon _A$ is a monomorphism.
Since the left adjoint functors $j_!$ and $j^*$ preserve coproducts,
the counit $\varepsilon : j_!j^*\rightarrow 1_{\mathcal{A}}$ also preserves coproducts, that is,
$\varepsilon _{ \amalg A}=\amalg \varepsilon _A$.
Since $A$-$\Mod$ is a Grothendieck category, we get that $\varepsilon _{ \amalg A}$ is a monomorphism,
and then $\varepsilon _{P}$ is a monomorphism
for any projective module $P$.
\end{proof}

Now let's return to the recollements arising from Morita ring.

\begin{lemma}\label{4.2}
  Let $\Lambda=\Lambda_{(\phi,\psi)}:={\scriptsize\left( \begin{array} {cc}
	A	&_AN_B  \\
_BM_A	& B
	\end{array}\right)  }$ be a Morita ring.

{\rm(1)} The recollement ($(B/\Im\phi)$-$\Mod$, $\Lambda$-$\Mod$, $A$-$\Mod$, $Q_B, Z_B, P_B, T_A, U_A,$
\linebreak
$H_A)$
satisfies the condition (P) if and only if
$\phi$ is a monomorphism.

{\rm(2)} The recollement ($(A/\Im\psi)$-$\Mod$, $\Lambda$-$\Mod$, $B$-$\Mod$, $Q_A, Z_A, P_A, T_B, U_B,$
\linebreak
$H_B)$
satisfies the condition (P) if and only if
$\psi$ is a monomorphism.

\end{lemma}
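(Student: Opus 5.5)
By Lemma~\ref{idem-recollement}, both parts reduce to identifying the canonical map $\Lambda e\otimes_{e\Lambda e}e\Lambda\rightarrow\Lambda$ for the relevant idempotent $e$, and showing it is a monomorphism exactly when $\phi$ (resp. $\psi$) is. We only treat (1); part (2) is symmetric, interchanging the roles of $A,B$, of $M,N$ and of $\phi,\psi$.

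For (1), take $e={\scriptsize\left(\begin{array}{cc}1_A&0\\0&0\end{array}\right)}$, so that $e\Lambda e\cong A$. As a left $\Lambda$-module, $\Lambda e$ is the column ${\scriptsize\left(\begin{array}{c}A\\M\end{array}\right)}$, and as a right module $e\Lambda$ is the row $(A\ \ N)$. Hence
\[
\Lambda e\otimes_A e\Lambda\cong \left(\begin{array}{cc} A\otimes_A A & A\otimes_A N\\ M\otimes_A A & M\otimes_A N\end{array}\right)\cong \left(\begin{array}{cc} A & N\\ M & M\otimes_A N\end{array}\right).
\]
The multiplication map to $\Lambda$ is computed componentwise from the multiplication rule of $\Lambda$. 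On the first three entries it is the identity of $A$, $N$ and $M$ respectively. On the $(2,2)$ entry it is $m\otimes n\mapsto \phi(m\otimes_A n)$, that is, exactly $\phi$.

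It then remains to argue that a componentwise map is injective if and only if each component is. Since the map is additive and decomposes as a direct sum of abelian group maps over the four entries, it is a monomorphism if and only if $\phi$ is injective. Lemma~\ref{idem-recollement} then gives the equivalence in (1).

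The main care point is checking that the recollement in (1) is indeed the one induced by this idempotent in the sense of Lemma~\ref{idem-recollement}. Concretely, $U_A$ must correspond to $e\Lambda\otimes_\Lambda -$ and $T_A$ to $\Lambda e\otimes_A-$, which is \cite[Proposition 2.4]{ELG}. One must also check that the identification of the $(2,2)$ component is compatible with the left $\Lambda$-module structure. Since injectivity is a property of the underlying abelian group map, the module structure does not affect the conclusion.
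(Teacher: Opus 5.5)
Your proposal is correct and follows the same route as the paper: both reduce, via Lemma~\ref{idem-recollement}, to whether the multiplication map $\Lambda e\otimes_{e\Lambda e}e\Lambda\rightarrow\Lambda$ is injective, for $e=\mathrm{diag}(1_A,0)$ (resp. $\mathrm{diag}(0,1_B)$). The only difference is that you compute this map explicitly, as the direct sum of the canonical isomorphisms on $A$, $N$, $M$ together with $\phi\colon M\otimes_A N\rightarrow B$. The paper instead cites the proof of \cite[Proposition 4.1]{GP17} for this fact.
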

\begin{proof}
Let $e={\scriptsize\left( \begin{array} {cc}
	1_A	&0  \\
0	& 0
	\end{array}\right)}$.
Then it follows from the proof of \cite[Proposition 4.1]{GP17} that the canonical map
$\Lambda e\otimes_{e\Lambda e}e\Lambda\rightarrow \Lambda$ is a monomorphism if and only if
$\phi$ is a monomorphism. Therefore, the statement (1) follows from Lemma~\ref{idem-recollement},
and (2) is proved similarly.
\end{proof}

Now we will construct cotorsion pairs on Morita ring $\Lambda=\Lambda_{(\phi,\psi)}:={\scriptsize\left( \begin{array} {cc}
	A	&_AN_B  \\
_BM_A	& B
	\end{array}\right)  }$ by the recollement ($(B/\Im\phi)$-$\Mod$, $\Lambda$-$\Mod$, $A$-$\Mod$, $Q_B, Z_B, P_B,$
\linebreak
$ T_A, U_A,
H_A)$. Let ${\mathcal{U^{'}}}$ and $\mathcal{V{'}}$ be two classes of left $B/\Im\phi$-modules,
and let ${\mathcal{U}^{''}}$ and ${\mathcal{V}^{''}}$ be two classes of left $A$-modules.
We set

 \begin{center} $\mathcal{M}^{\mathcal{U'}}_{\mathcal{U''}}=\{\left(\begin{array}{cc}
X\\
Y
\end{array} \right)_{f,g}|\ B/\Im\phi\otimes_B\Coker f\in{{\mathcal{{U'}}} },X\in{\mathcal{U''}}, f$ is a  monomorphism$\},$
\end{center}

 \begin{center}$\mathcal{N}^{\mathcal{V'}}_{\mathcal{V''}}=\{\left(\begin{array}{cc}
X\\
Y
\end{array} \right)_{f,g}|\ B/\Im\phi\otimes_B\Ker \tilde{g}\in{\mathcal{V'}},X\in{\mathcal{V''}
},\tilde{g}$ is an epimorphism$\}$.
\end{center}

\begin{corollary}\label{cor-motia-contor-1} Let $({\mathcal{U^{'}}},\mathcal{V{'}})$ and $ ({\mathcal{U}^{''}},{\mathcal{V}^{''}})$ be cotorsion pairs in $B/\Im\phi$-$\Mod$ and $A$-$\Mod$, respectively. Assume that $\phi$ is a monomorphism.

{\rm (1)} If $\Tor^{A}_1(M,\mathcal{U''})=0$ then
 $(^{\bot}{\mathcal{N}_{\mathcal{V^{''}}}^{\mathcal{V^{'}}}},
\mathcal{N}_{\mathcal{V^{''}}}^{\mathcal{V^{'}}})$ is a cotorsion pair in $\Lambda$-$\Mod$;
If $\Ext^{1}_A(N,\mathcal{V''})=0$ then
$(\mathcal{M}_{\mathcal{U^{''}}}^{\mathcal{U^{'}}},
({\mathcal{M}_{\mathcal{U^{''}}}^{\mathcal{U^{'}}}})^\bot)$
is a cotorsion pair in $\Lambda$-$\Mod$.

{\rm (2)} If $\Tor^{A}_1(M,\mathcal{U''})=0$
and $\Ext^{1}_A(N,\mathcal{V''})=0$, then $(\mathcal{M}_{\mathcal{U^{''}}}^{\mathcal{U^{'}}},
\mathcal{N}_{\mathcal{V^{''}}}^{\mathcal{V^{'}}})$
is a cotorsion pair in $\Lambda$-$\Mod$; and it is complete and hereditary
if so are $({\mathcal{U^{'}}},\mathcal{V{'}})$ and $ ({\mathcal{U}^{''}},{\mathcal{V}^{''}})$.
\end{corollary}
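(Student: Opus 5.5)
The plan is to specialize Theorem~\ref{3.3}, Proposition~\ref{prop-two-contor-equal} and Theorem~\ref{3.8(1)} to the recollement $((B/\Im\phi)\text{-}\Mod, \Lambda\text{-}\Mod, A\text{-}\Mod, Q_B, Z_B, P_B, T_A, U_A, H_A)$. Here $i^*=Q_B$, $i^!=P_B$, $j_!=T_A$, $j^*=U_A$, $j_*=H_A$. Since $\phi$ is a monomorphism, Lemma~\ref{4.2}(1) shows that this recollement satisfies condition (P). It then remains to translate three things: the hypotheses $(\mathbb{L}_1j_!)(\mathcal{U}'')=0$ and $(\mathbb{R}_1j_*)(\mathcal{V}'')=0$, and the abstract classes $\mathcal{M}$ and $\mathcal{N}$ of Section~\ref{Gluing cotorsion pairs along recollements}, into the module-theoretic descriptions given just before the corollary.

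First, the derived functors. The functor $T_A$ sends $X\mapsto (X, M\otimes_AX)$. A projective resolution of $X$ is sent by $T_A$ to a projective resolution, because $T_A$ preserves projectives. Its homology is $0$ in the first coordinate and $\Tor^A_n(M,X)$ in the second. Hence $(\mathbb{L}_1T_A)(X)=0$ if and only if $\Tor_1^A(M,X)=0$. Dually, $H_A$ sends an injective coresolution of $X$ to one whose second coordinate is $\Hom_A(N,-)$ of it, so $(\mathbb{R}_1H_A)(X)=0$ if and only if $\Ext^1_A(N,X)=0$.

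Second, the identification of the classes. For $\mathcal{M}$: $i^*(X,Y)_{f,g}=B/\Im\phi\otimes_B\Coker f$ and $j^*(X,Y)=X$ match directly. The condition $(\mathbb{L}_1Q_B)=0$ must be replaced by "$f$ is a monomorphism". Under (P), Lemma~\ref{3.4}(2) together with Lemma~\ref{P} says that $(\mathbb{L}_1i^*)(Z)=0$ if and only if $\varepsilon_Z$ is monic. The counit $T_AU_A(X,Y)\to(X,Y)$ is $(1_X,f)\colon (X,M\otimes_AX)\to(X,Y)$, which is monic exactly when $f$ is. Dually, by Lemma~\ref{3.4}(1) and Lemma~\ref{P}, $(\mathbb{R}_1P_B)=0$ if and only if the unit $(X,Y)\to H_AU_A(X,Y)=(X,\Hom_A(N,X))$, namely $(1_X,\tilde g)$, is epic, i.e. $\tilde g$ is an epimorphism. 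Also $i^!=P_B$ gives $B/\Im\phi\otimes_B\Ker\tilde g$. So the displayed classes coincide with the abstract ones.

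With these translations:
\begin{itemize}
\item Part (1) is exactly Theorem~\ref{3.3}.
\item Part (2) follows from Proposition~\ref{prop-two-contor-equal}, which identifies the two cotorsion pairs and yields $(\mathcal{M}^{\mathcal{U}'}_{\mathcal{U}''},\mathcal{N}^{\mathcal{V}'}_{\mathcal{V}''})$ as a cotorsion pair.
\item The complete hereditary statement in (2) follows from Theorem~\ref{3.8(1)}.
\end{itemize}

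The main obstacle is the careful verification that the counit and unit of the Morita recollement are $(1,f)$ and $(1,\tilde g)$. One must also check that monomorphisms and epimorphisms in $\Lambda$-$\Mod$ are componentwise, which holds since kernels and cokernels are computed componentwise.
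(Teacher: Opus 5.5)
Your proposal is correct and takes the same route as the paper. You verify condition (P) by Lemma~\ref{4.2}(1), replace $(\mathbb{L}_1Q_B)=0$ and $(\mathbb{R}_1P_B)=0$ by ``$f$ monic'' and ``$\tilde g$ epic'' via Lemmas~\ref{3.4} and~\ref{P}, and then invoke Theorem~\ref{3.3}, Proposition~\ref{prop-two-contor-equal} and Theorem~\ref{3.8(1)}. You also make explicit what the paper leaves implicit: the counit $(1_X,f)$, the unit $(1_X,\tilde g)$, and the identifications of $(\mathbb{L}_1T_A)=0$ with $\Tor^A_1(M,-)=0$ and of $(\mathbb{R}_1H_A)=0$ with $\Ext^1_A(N,-)=0$; and you correctly write ``epimorphism'' for $\tilde g$, where the paper's proof has the typo ``monomorphism''.
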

\begin{proof}
Since $\phi$ is a monomorphism,  it follows from Lemmas~\ref{4.2} and ~\ref{3.4}
that $\mathbb{L}_1Q_B(\left(\begin{array}{cc}
X\\
Y
\end{array} \right)_{f,g})=0$ if and only if $f$ is a monomorphism,
and $\mathbb{R}_1P_B(\left(\begin{array}{cc}
X\\
Y
\end{array} \right)_{f,g})$
\linebreak
$=0$ precisely when $\widetilde{g}$ is a monomorphism.
So, the statement follows from Theorem~\ref{3.3}, Proposition~\ref{prop-two-contor-equal} and Theorem~\ref{3.8(1)}.
\end{proof}

Similarly, consider the recollement ($(A/\Im\psi)$-$\Mod$, $\Lambda$-$\Mod$, $B$-$\Mod$, $Q_A,$
\linebreak
$ Z_A, P_A, T_B, U_B, H_B)$. Let ${\mathcal{C{'}}}$ and $\mathcal{D{'}}$ be two classes of left $A/\Im\psi$-modules,
and let ${\mathcal{C}^{''}}$ and ${\mathcal{D}^{''}}$ be two classes of left $B$-modules.
We set
 \begin{center}
 $\mathcal{M}^{\mathcal{C'}}_{\mathcal{C''}}=\{\left(\begin{array}{cc}
X\\
Y
\end{array} \right)_{f,g}|\ A/\Im\psi\otimes_A\Coker g\in{\mathcal{C'}},Y\in\mathcal{C''},
g$ is a monomorphism$\}$,
 \end{center}

 \begin{center}$\mathcal{N}^{\mathcal{D'}}_{\mathcal{D''}}=\{\left(\begin{array}{cc}
X\\
Y
\end{array} \right)_{f,g}|\  A/\Im\psi\otimes_A\Ker \tilde{f}
\in\mathcal{D'}, Y\in\mathcal{D''},
\tilde{f}$
is an epimorphism$\}$.
 \end{center}
Applying Theorem~\ref{3.3}, Proposition~\ref{prop-two-contor-equal} and
Theorem~\ref{3.8(1)}, we have the following corollary.

\begin{corollary}\label{cor-motia-contor-2} Let $({\mathcal{C^{'}}},\mathcal{D{'}})$ and $ ({\mathcal{C}^{''}},{\mathcal{D}^{''}})$ be cotorsion pairs in $A/\Im\psi$-$\Mod$ and $B$-$\Mod$, respectively. Assume that $\psi$ is a monomorphism.

{\rm (1)} If $\Tor^{B}_1(N,\mathcal{C''})=0$ then
 $(^{\bot}{\mathcal{N}_{\mathcal{D^{''}}}^{\mathcal{D^{'}}}},
\mathcal{N}_{\mathcal{D^{''}}}^{\mathcal{D^{'}}})$ is a cotorsion pair in $\Lambda$-$\Mod$;
If $\Ext^{1}_B(M,\mathcal{D''})=0$ then
$(\mathcal{M}_{\mathcal{C^{''}}}^{\mathcal{C^{'}}},
({\mathcal{M}_{\mathcal{C^{''}}}^{\mathcal{C^{'}}}})^\bot)$
is a cotorsion pair in $\Lambda$-$\Mod$.

{\rm (2)} If $\Tor^{B}_1(N,\mathcal{C''})=0$
and $\Ext^{1}_B(M,\mathcal{D''})=0$, then $(\mathcal{M}_{\mathcal{C^{''}}}^{\mathcal{C^{'}}},
\mathcal{N}_{\mathcal{D^{''}}}^{\mathcal{D^{'}}})$
is a cotorsion pair in $\Lambda$-$\Mod$; and it is complete and hereditary
if so are $({\mathcal{C^{'}}},\mathcal{D{'}})$ and $ ({\mathcal{C}^{''}},{\mathcal{D}^{''}})$.
\end{corollary}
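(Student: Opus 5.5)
The plan is to specialize Theorem~\ref{3.3}, Proposition~\ref{prop-two-contor-equal} and Theorem~\ref{3.8(1)} to the recollement $((A/\Im\psi)$-$\Mod, \Lambda$-$\Mod, B$-$\Mod, Q_A, Z_A, P_A, T_B, U_B, H_B)$. So we must translate each abstract hypothesis into module-theoretic terms, in the same way as in the proof of Corollary~\ref{cor-motia-contor-1}.

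First, condition (P). Since $\psi$ is a monomorphism, Lemma~\ref{4.2}(2) shows that this recollement satisfies condition (P). By Lemma~\ref{P}, $\delta_I$ is then an epimorphism for every injective $I$. Hence Lemma~\ref{3.4} applies in both directions:
\begin{itemize}
\item $\mathbb{L}_1Q_A$ vanishes on $\left(\begin{array}{c}X\\ Y\end{array}\right)_{f,g}$ if and only if the counit $T_BU_B(-)\to(-)$ is a monomorphism;
\item $\mathbb{R}_1P_A$ vanishes if and only if the unit $(-)\to H_BU_B(-)$ is an epimorphism.
\end{itemize}
Next we compute these maps explicitly. The counit is the pair $(g,1_Y)\colon \left(\begin{array}{c}N\otimes_BY\\ Y\end{array}\right)_{\Phi_Y,1}\to \left(\begin{array}{c}X\\ Y\end{array}\right)_{f,g}$, so it is monic exactly when $g$ is monic. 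The unit is the pair $(\tilde f,1_Y)$ into $\left(\begin{array}{c}\Hom_B(M,Y)\\ Y\end{array}\right)$, so it is epic exactly when $\tilde f$ is epic. Together with $Q_A(-)=A/\Im\psi\otimes_A\Coker g$, $P_A(-)=A/\Im\psi\otimes_A\Ker\tilde f$ and $U_B(-)=Y$, this identifies the abstract classes $\mathcal{M}^{\mathcal{C'}}_{\mathcal{C''}}$ and $\mathcal{N}^{\mathcal{D'}}_{\mathcal{D''}}$ with the displayed module classes.

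Second, the derived-functor hypotheses. We have $j_!=T_B$, which on the underlying data is $Y\mapsto(N\otimes_BY,Y)$. Since $\Lambda$-$\Mod$ short exact sequences are componentwise exact, $\mathbb{L}_1T_B(Y)$ is computed from a projective resolution of $Y$. Projective $B$-modules go to projective $\Lambda$-modules, so the $B$-component stays exact and the $A$-component gives $\Tor^B_1(N,Y)$. Thus $\mathbb{L}_1T_B(Y)=\left(\begin{array}{c}\Tor_1^B(N,Y)\\ 0\end{array}\right)$, which vanishes iff $\Tor^B_1(N,Y)=0$. Dually, using injective coresolutions and $H_B$, we get $\mathbb{R}_1H_B(Y)=\left(\begin{array}{c}\Ext^1_B(M,Y)\\ 0\end{array}\right)$. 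Hence $(\mathbb{L}_1j_!)(\mathcal{C''})=0$ iff $\Tor^B_1(N,\mathcal{C''})=0$, and $(\mathbb{R}_1j_*)(\mathcal{D''})=0$ iff $\Ext^1_B(M,\mathcal{D''})=0$.

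With these translations in hand, the results follow directly:
\begin{itemize}
\item statement (1) is Theorem~\ref{3.3};
\item the cotorsion-pair claim in (2) is Theorem~\ref{3.3} combined with Proposition~\ref{prop-two-contor-equal};
\item the completeness and heredity claim in (2) is Theorem~\ref{3.8(1)}.
\end{itemize}

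The main obstacle is the rigorous identification of $\mathbb{L}_1T_B$ and $\mathbb{R}_1H_B$ with the $\Tor$ and $\Ext$ groups, and of the counit and unit with $g$ and $\tilde f$. The first needs the components of $T_B(P)$ and $H_B(I)$ to behave well for $P$ projective and $I$ injective, and the second needs the explicit adjunction maps of \cite{ELG}. Everything after that is bookkeeping.
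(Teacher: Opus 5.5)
Your proposal is correct and follows essentially the paper's argument. You specialize Theorem~\ref{3.3}, Proposition~\ref{prop-two-contor-equal} and Theorem~\ref{3.8(1)} to the recollement $((A/\Im\psi)$-$\Mod, \Lambda$-$\Mod, B$-$\Mod, Q_A, Z_A, P_A, T_B, U_B, H_B)$, using Lemma~\ref{4.2}(2) for condition (P) and Lemma~\ref{3.4} to turn $\mathbb{L}_1Q_A=0$ and $\mathbb{R}_1P_A=0$ into ``$g$ monic'' and ``$\tilde f$ epic''. Your explicit computation of $\mathbb{L}_1T_B(Y)\cong\left(\begin{array}{c}\Tor_1^B(N,Y)\\ 0\end{array}\right)$ and $\mathbb{R}_1H_B(Y)\cong\left(\begin{array}{c}\Ext^1_B(M,Y)\\ 0\end{array}\right)$ usefully spells out the translation of the hypotheses, which the paper leaves implicit.
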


Next, we consider the special case in which $M\otimes _AN=0$ or $N\otimes _BM=0$.
As in \cite{ZP}, we assume that $(\mathcal{U},\mathcal{X})$ and $(\mathcal{V},\mathcal{Y})$ are cotorsion pairs in $A$-$\Mod$ and $B$-$\Mod$, respectively.
We define
 \begin{center}
$\mathcal{M}^{\mathcal{V}}_{\mathcal{U}}=\{\left(\begin{array}{cc}
X\\
Y
\end{array} \right)_{f,g}|\ \Coker f\in{{\mathcal{V}} },X\in{\mathcal{U}}, f$ is a  monomorphism$\},$
 \end{center}

 \begin{center} $\mathcal{N}^{\mathcal{Y}}_{\mathcal{X}}=\{\left(\begin{array}{cc}
X\\
Y
\end{array} \right)_{f,g}|\ \Ker \tilde{g}\in{\mathcal{Y}},X\in{\mathcal{X}
},\tilde{g}$ is an epimorphism$\}$,
 \end{center}

 \begin{center} $\mathcal{M}^{\mathcal{U}}_{\mathcal{V}}=\{\left(\begin{array}{cc}
X\\
Y
\end{array} \right)_{f,g}| \ \Coker g\in{\mathcal{U}},Y\in\mathcal{V},
g$ is a monomorphism$\}$,
 \end{center}

 \begin{center} $\mathcal{N}^{\mathcal{X}}_{\mathcal{Y}}=\{\left(\begin{array}{cc}
X\\
Y
\end{array} \right)_{f,g}| \ \Ker \tilde{f}
\in\mathcal{X}, Y\in\mathcal{Y},
\tilde{f}$
is an epimorphism$\}$.
 \end{center}
\noindent Applying Corollary~\ref{cor-motia-contor-1} and ~\ref{cor-motia-contor-2}, we get the following two corollaries.

\begin{corollary}\label{cor-motia-contor-3}
Assume that $M\otimes _AN=0$.

{\rm (1)} If $\Tor^{A}_1(M,\mathcal{U})=0$ then
 $(^{\bot}\mathcal{N}^{\mathcal{Y}}_{\mathcal{X}}, \mathcal{N}^{\mathcal{Y}}_{\mathcal{X}})$ is a cotorsion pair in $\Lambda$-$\Mod$;
If $\Ext^{1}_A(N,\mathcal{X})=0$ then
$(\mathcal{M}^{\mathcal{V}}_{\mathcal{U}},
(\mathcal{M}^{\mathcal{V}}_{\mathcal{U}})^\bot)$
is a cotorsion pair in $\Lambda$-$\Mod$.

{\rm (2)} If $\Tor^{A}_1(M,\mathcal{U})=0$
and $\Ext^{1}_A(N,\mathcal{X})=0$, then $(\mathcal{M}^{\mathcal{V}}_{\mathcal{U}},
\mathcal{N}^{\mathcal{Y}}_{\mathcal{X}})$
is a cotorsion pair in $\Lambda$-$\Mod$; and it is complete and hereditary
if so are $(\mathcal{U},\mathcal{X})$ and $(\mathcal{V},\mathcal{Y})$.

\end{corollary}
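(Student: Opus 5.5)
The plan is to reduce the statement to Corollary~\ref{cor-motia-contor-1}. We apply it to the recollement $((B/\Im\phi)\text{-}\Mod, \Lambda\text{-}\Mod, A\text{-}\Mod, Q_B, Z_B, P_B, T_A, U_A, H_A)$, taking $(\mathcal{U'},\mathcal{V'})$ to be a suitable cotorsion pair in $B/\Im\phi$-$\Mod$ and $(\mathcal{U''},\mathcal{V''})=(\mathcal{U},\mathcal{X})$. The key observation is that $M\otimes_AN=0$ forces $\phi:M\otimes_AN\to B$ to be the zero map from the zero module. Hence $\phi$ is trivially a monomorphism, and $\Im\phi=0$, so $B/\Im\phi=B$. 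Consequently the recollement is a recollement with $\mathcal{A'}=B$-$\Mod$, and by Lemma~\ref{4.2}(1) it satisfies condition (P).

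Next we identify the classes. With $B/\Im\phi=B$, the functor $B\otimes_B-$ is the identity. The class $\mathcal{M}^{\mathcal{U'}}_{\mathcal{U''}}$ defined before Corollary~\ref{cor-motia-contor-1}, with $\mathcal{U'}=\mathcal{V}$ and $\mathcal{U''}=\mathcal{U}$, becomes exactly the set of $\left(\begin{array}{c}X\\Y\end{array}\right)_{f,g}$ with $\Coker f\in\mathcal{V}$, $X\in\mathcal{U}$ and $f$ monic. This is $\mathcal{M}^{\mathcal{V}}_{\mathcal{U}}$. Similarly, with $\mathcal{V'}=\mathcal{Y}$ and $\mathcal{V''}=\mathcal{X}$, the class $\mathcal{N}^{\mathcal{V'}}_{\mathcal{V''}}$ is $\mathcal{N}^{\mathcal{Y}}_{\mathcal{X}}$. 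Thus $(\mathcal{U'},\mathcal{V'})=(\mathcal{V},\mathcal{Y})$ is a cotorsion pair in $B$-$\Mod$ and $(\mathcal{U''},\mathcal{V''})=(\mathcal{U},\mathcal{X})$ is a cotorsion pair in $A$-$\Mod$, which matches the hypotheses of Corollary~\ref{cor-motia-contor-1}.

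With these identifications, the conditions $\Tor^A_1(M,\mathcal{U})=0$ and $\Ext^1_A(N,\mathcal{X})=0$ are literally the hypotheses $\Tor^A_1(M,\mathcal{U''})=0$ and $\Ext^1_A(N,\mathcal{V''})=0$ of Corollary~\ref{cor-motia-contor-1}. Statements (1) and (2) then follow verbatim from parts (1) and (2) of that corollary, including the complete and hereditary part when $(\mathcal{U},\mathcal{X})$ and $(\mathcal{V},\mathcal{Y})$ are complete hereditary.

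The only point needing care is bookkeeping. One must check that the module-theoretic descriptions $\mathbb{L}_1Q_B=0 \iff f$ monic and $\mathbb{R}_1P_B=0 \iff \tilde g$ epic used in Corollary~\ref{cor-motia-contor-1} remain valid. They do, since $\phi$ is injective. One must also confirm that $Q_B$ and $P_B$ reduce to $\Coker f$ and $\Ker\tilde g$ once $\Im\phi=0$; no genuine obstacle is expected.
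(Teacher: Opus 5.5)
Your proposal is correct and matches the paper's argument. Since $M\otimes_A N=0$, $\phi$ is the monic zero map with $\Im\phi=0$, so Corollary~\ref{cor-motia-contor-1} applies to the first Morita recollement with $(\mathcal{U'},\mathcal{V'})=(\mathcal{V},\mathcal{Y})$ and $(\mathcal{U''},\mathcal{V''})=(\mathcal{U},\mathcal{X})$, and the glued classes become $\mathcal{M}^{\mathcal{V}}_{\mathcal{U}}$ and $\mathcal{N}^{\mathcal{Y}}_{\mathcal{X}}$. Your bookkeeping is also right, including that $\mathbb{R}_1P_B$ vanishes exactly when $\tilde g$ is an epimorphism.
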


\begin{corollary}\label{cor-motia-contor-4}
Assume that $N\otimes _BM=0$.

{\rm (1)} If $\Tor^{B}_1(N,\mathcal{V})=0$ then
 $(^{\bot}\mathcal{N}^{\mathcal{X}}_{\mathcal{Y}}, \mathcal{N}^{\mathcal{X}}_{\mathcal{Y}})$ is a cotorsion pair in $\Lambda$-$\Mod$;
If $\Ext^{1}_B(M,\mathcal{Y})=0$ then
$(\mathcal{M}^{\mathcal{U}}_{\mathcal{V}},
(\mathcal{M}^{\mathcal{U}}_{\mathcal{V}})^\bot)$
is a cotorsion pair in $\Lambda$-$\Mod$.

{\rm (2)} If $\Tor^{B}_1(N,\mathcal{V})=0$
and $\Ext^{1}_B(M,\mathcal{Y})=0$, then $(\mathcal{M}^{\mathcal{U}}_{\mathcal{V}},
\mathcal{N}^{\mathcal{X}}_{\mathcal{Y}})$
is a cotorsion pair in $\Lambda$-$\Mod$; and it is complete and hereditary
if so are $(\mathcal{U},\mathcal{X})$ and $(\mathcal{V},\mathcal{Y})$.
\end{corollary}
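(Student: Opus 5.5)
The plan is to deduce Corollary~\ref{cor-motia-contor-4} from Corollary~\ref{cor-motia-contor-2}, applied to the second recollement ($(A/\Im\psi)$-$\Mod$, $\Lambda$-$\Mod$, $B$-$\Mod$, $Q_A, Z_A, P_A, T_B, U_B, H_B$). First, the assumption $N\otimes_BM=0$ forces $\psi: N\otimes_BM\rightarrow A$ to be the zero map out of the zero module. Hence $\psi$ is trivially a monomorphism, and $\Im\psi=0$, so $A/\Im\psi=A$. By Lemma~\ref{4.2}(2) the recollement then satisfies condition (P), and its left-hand category is just $A$-$\Mod$.

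Next I would set $\mathcal{C}'=\mathcal{U}$, $\mathcal{D}'=\mathcal{X}$ (a cotorsion pair in $A$-$\Mod=(A/\Im\psi)$-$\Mod$) and $\mathcal{C}''=\mathcal{V}$, $\mathcal{D}''=\mathcal{Y}$ (a cotorsion pair in $B$-$\Mod$). Then I would check that the classes coincide: since $A/\Im\psi=A$, we have $A/\Im\psi\otimes_A\Coker g\cong \Coker g$ and $A/\Im\psi\otimes_A\Ker\tilde f\cong\Ker\tilde f$. Therefore $\mathcal{M}^{\mathcal{C}'}_{\mathcal{C}''}$ becomes exactly $\mathcal{M}^{\mathcal{U}}_{\mathcal{V}}$, and $\mathcal{N}^{\mathcal{D}'}_{\mathcal{D}''}$ becomes exactly $\mathcal{N}^{\mathcal{X}}_{\mathcal{Y}}$, as defined just before Corollary~\ref{cor-motia-contor-3}. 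The hypotheses also translate directly: $\Tor^B_1(N,\mathcal{C}'')=\Tor^B_1(N,\mathcal{V})$ and $\Ext^1_B(M,\mathcal{D}'')=\Ext^1_B(M,\mathcal{Y})$.

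With these identifications, statements (1) and (2), including the complete and hereditary clause, are literally Corollary~\ref{cor-motia-contor-2}(1) and (2). There is no real obstacle. The only point needing care is confirming that the identification $A/\Im\psi=A$ is compatible with the recollement functors, namely that $Z_A$ and $Q_A$, $P_A$ become the stated functors with $i^*$ given by $\Coker g$ and $i^!$ given by $\Ker\tilde f$. This is immediate from the explicit formulas.
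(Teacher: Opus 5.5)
Your proposal is correct and matches the paper's argument: the paper obtains this corollary by applying Corollary~\ref{cor-motia-contor-2} to the second recollement. There $N\otimes_BM=0$ makes $\psi$ a monomorphism with $\Im\psi=0$, so $A/\Im\psi=A$, and the classes $\mathcal{M}^{\mathcal{C}'}_{\mathcal{C}''}$, $\mathcal{N}^{\mathcal{D}'}_{\mathcal{D}''}$ and the $\Tor$/$\Ext$ hypotheses specialize under your identification $(\mathcal{C}',\mathcal{D}')=(\mathcal{U},\mathcal{X})$, $(\mathcal{C}'',\mathcal{D}'')=(\mathcal{V},\mathcal{Y})$ to exactly those in the statement.
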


\begin{remark}\label{rek-zhu-Mao-cotor}
{\rm (1) When $M=0$ or $N=0$, parts of Corollary~\ref{cor-motia-contor-3} and ~\ref{cor-motia-contor-4} were obtained
by Zhu et al.~\cite[Theorem 3.4 and Proposition 3.7]{ZR}.

(2) Applying Theorems~\ref{3.6}, we improve the results of
\cite[Theorem 4.4]{ML}. Indeed, let $T={\scriptsize\left( \begin{array} {cc}
	A	&0  \\
U	& B
	\end{array}\right)  }$ as in \cite{ML}, where $U$ an $B$-$A$-bimodule.
Consider the first recollement
($B$-$\Mod$, $\Lambda$-$\Mod$, $A$-$\Mod$, $Q_B, Z_B, P_B, T_A, U_A, H_A)$ with
$\mathcal{C}_1$ and $\mathcal{C}_2$ (resp. $\mathcal{D}_1$ and $\mathcal{D}_2$) being
classes of left $A$-modules (resp. $B$-modules). Then we have $\mathcal{M}_{\mathcal{C}_1}^{\mathcal{D}_1}=\mathfrak{B}_{\mathcal{D}_1}^{\mathcal{C}_1}$ and
$\mathcal{N}_{\mathcal{C}_2}^{\mathcal{D}_2}=\mathfrak{A}_{\mathcal{D}_2}^{\mathcal{C}_2}$,
where the symbols $\mathfrak{B}_{\mathcal{D}_1}^{\mathcal{C}_1}$ and $\mathfrak{A}_{\mathcal{D}_2}^{\mathcal{C}_2}$
are adopted from \cite{ML}. Applying Theorem~\ref{3.6} to this recollement yields that $(\mathcal{C}_1, \mathcal{C}_2)$ and $(\mathcal{D}_1, \mathcal{D}_2)$ are cotorsion pairs
if and only if $(\mathfrak{B}_{\mathcal{D}_1}^{\mathcal{C}_1}, \mathfrak{A}_{\mathcal{D}_2}^{\mathcal{C}_2})$ is a cotorsion pair,
requiring only $\Tor _1^A(U, ^\bot{\mathcal{C}_2})=0$ due to the exactness of $H_A$.
Similarly, applying Theorem~\ref{3.6} to the second recollement
($A$-$\Mod$, $\Lambda$-$\Mod$, $B$-$\Mod$, $Q_A, Z_A, P_A, T_B, U_B, H_B)$,
we obtain that $(\mathcal{C}_1, \mathcal{C}_2)$ and $(\mathcal{D}_1, \mathcal{D}_2)$ are cotorsion pairs
if and only if $(\mathfrak{A}_{\mathcal{D}_1}^{\mathcal{C}_1}, \mathfrak{I}_{\mathcal{D}_2}^{\mathcal{C}_2})$ is a cotorsion pair,
under the assumption that $\Ext _B^1(U, {\mathcal{D}_1^\bot})=0$ since $T_B$ is exact.

(3) Applying Theorem~\ref{3.8(1)} and Proposition~\ref{3.16} to these recollements,
we improve the result in \cite[Theorem 5.6]{ML}. Indeed, from the first recollement we get that if $\Tor _1^A(U, {\mathcal{C}_1})=0$, and
$(\mathcal{C}_1, \mathcal{C}_2)$ and $(\mathcal{D}_1, \mathcal{D}_2)$ are complete hereditary cotorsion pairs, then
$(\mathfrak{B}_{\mathcal{D}_1}^{\mathcal{C}_1}, \mathfrak{A}_{\mathcal{D}_2}^{\mathcal{C}_2})$ is a complete hereditary cotorsion pair,
and the converse holds in case $U\otimes _A\mathcal{C}_1\subseteq \mathcal{D}_1$ and $\Tor _1^A(U, ^\perp{\mathcal{C}_2})=0$.
Similarly, from the second recollement we get that if $\Ext ^1_B(U, {\mathcal{D}_2})=0$, and
$(\mathcal{C}_1, \mathcal{C}_2)$ and $(\mathcal{D}_1, \mathcal{D}_2)$ are complete hereditary cotorsion pairs, then
$(\mathfrak{A}_{\mathcal{D}_1}^{\mathcal{C}_1}, \mathfrak{I}_{\mathcal{D}_2}^{\mathcal{C}_2})$ is a complete hereditary cotorsion pair,
and the converse holds in case $\Hom _B(U, \mathcal{D}_2)\subseteq \mathcal{C}_2$ and $\Ext ^1_B(U, {\mathcal{D}_1^\perp})=0$.
  }
\end{remark}

In \cite{ZP}, Zhang-Cui-Rong construct four cotorsion pairs on Morita rings under the assumption
that $\phi = 0 = \psi$ or $M\otimes _AN=0=N\otimes _BM$.
It is worth noting that their construction differs from ours, particularly because
the rings $A$, $B$ and $\Lambda$ are more closely linked in the case $\phi = 0 = \psi$.
However, in Corollaries~\ref{cor-motia-contor-1} and~\ref{cor-motia-contor-2}, we consider a different setting where either
$\phi$ or $\psi$ is a monomorphism. Therefore, the approach in our construction differs from that in \cite{ZP}.
The following example, due to Zhang-Cui-Rong \cite{ZP}, shows that the cotorsion pairs constructed here differ
from those in \cite{ZP}.

\begin{example}\label{ex}{\rm
Let $A=B$ be the path algebra $k(1 \rightarrow 2)$ where k is a field. We write the conjunction of paths from right to left. Thus $e_1 A e_2=0$ and $e_2 A e_1 \cong k$. Take $M=N=A e_2 \otimes_k e_1 A$. Then $M \otimes_A N=0=N \otimes_A M$. Let $\Lambda$ be the Morita ring ${\scriptsize\left( \begin{array} {cc}
	A	&N \\
N	& A
	\end{array}\right) } $.
In this case, $_AM= {_{A}N}$ is a projective left $A$-module and $M_A=N_A$ is a projective right $A$-module.
Take $({\mathcal{U}},\mathcal{X})=(\mathcal{V},\mathcal{Y})=(_A{\mathcal{P}},A$-$\Mod)$ and set
 \begin{center}
$\mathcal{M}_1=\{\left(\begin{array}{cc}
X\\
Y
\end{array} \right)_{f,g}|\ \Coker f\in{_A\mathcal{P}},X\in {_A\mathcal{P}},
f$ is a monomorphism$\}$,
 \end{center}

 \begin{center}$\mathcal{M}_2=\{\left(\begin{array}{cc}
X\\
Y
\end{array} \right)_{f,g}|\ \Coker g\in{_A\mathcal{P}},Y\in {_A\mathcal{P}},
g$ is a monomorphism$\}$,
 \end{center}

 \begin{center} $\mathcal{N}_1=\{\left(\begin{array}{cc}
X\\
Y
\end{array} \right)_{f,g}|\
\tilde{g}$
is an epimorphism$\}$,
 \end{center}

 \begin{center} $\mathcal{N}_2=\{\left(\begin{array}{cc}
X\\
Y
\end{array} \right)_{f,g}|\
\tilde{f}$
is an epimorphism$\}$.
 \end{center}
\noindent Then it follows from Corollary~\ref{cor-motia-contor-3} and~\ref{cor-motia-contor-4} that both $(\mathcal{M}_1, \mathcal{N}_1)$
and $(\mathcal{M}_2, \mathcal{N}_2)$ are complete hereditary cotorsion pairs in $\Lambda$-$\Mod$.
We remark that the results in \cite{HJS} cannot be used to glue together $({\mathcal{U}},\mathcal{X})$ and $(\mathcal{V},\mathcal{Y})$, since
the functor $i^!$ in this recollement fails to be exact.
Moreover, the cotorsion pairs constructed here are distinct from those of Cui et al.~\cite{ZP}; see \cite[Example 4.3]{ZP}.
}
\end{example}

\medskip

\noindent {\footnotesize {\bf ACKNOWLEDGMENT.}
This work is supported by
the National Natural Science Foundation of China (12561008),
the project of Young and Middle-aged Academic and Technological leader of Yunnan
(202305AC160005) and the Basic Research Program of Yunnan Province (202301AT070070).}


\begin{thebibliography}{99}
\bibitem{AKL11} L. Angeleri H\"{u}gel, S. Koenig and Q. H. Liu, Recollements and tilting objects, J. Pure
Appl. Algebra. 215 (2011), 420--438.

\bibitem{BBD82} A. A. Be\u ilinson, J. Bernstein and P. Deligne, Faisceaux pervers, in Analysis and topology on
	singular spaces, I (Luminy, 1981), 5--171, Ast\'erisque, 100, Soc. Math. France, Paris, 1982.

\bibitem{CW} W. Cao, J. Wei and K. Wu, Recollements and $n$-cotorsion pairs, arXiv:2403.04220.

\bibitem{Chen13} J. M. Chen, Cotorsion pairs in a recollement of triangualted categories, Comm. Algebra. 41 (2013),
no. 8, 2903--2915.

\bibitem{Chen12} X. W. Chen, A recollement of vector bundles, Bull. Lond. Math. Soc. 44 (2012), 271--284.

\bibitem{ZP} J. Cui, S. Rong and P. Zhang, Cotorsion pairs and model structures on Morita rings, J. Algebra 661 (2025),  1--81.

 \bibitem{EEE} E. E. Enochs and O. M. G. Jenda, Relative Homological Algebra, de Gruyter Exposit. Math., vol. 30, Walter De Gruyter, Berlin, New York, 2000.
\bibitem{FZ17} J. Feng and P. Zhang, Types of serre subcategories of Grothendieck categories, J. Algebra 508 (2017), 16--34.
\bibitem{FV} V. Franjou and T. Pirashvili, Comparison of abelian categories recollements, Doc. Math. 9 (2004), 41--56.

\bibitem{FH} X. R. Fu and Y. G. Hu, The recollements of abelian categories: cotorsion dimensions and cotorsion triples, Bull. Iranian. Math. Soc. 48 (2022),
no. 3, 963--977.
\bibitem{GKP21}  N. Gao, S. K\"{o}nig and C. Psaroudakis, Ladders of recollements of abelian categories, J. Algebra. 579 (2021), 256--302.
\bibitem{GP17} N. Gao and C. Psaroudakis, Gorenstein homological aspects of monomorphism categories via Morita rings, Algebr. Represent.
Theory 20 (2017), 487--529.

\bibitem{JR} J. R. Garcia-Rozas, Covers and envelopes in the category of complexes of Module, Reserch Notes in Math., vol.407, Chapman \& Hall/CRC, Boca Raton, FL, 1999.
\bibitem{GJ} J. Gillespie, Cotorsion pairs and degreewise homological model structures, Homol. Homotopy Appl. 10 (2008), no. 1, 283--304.
\bibitem{GR} R. G\"{o}bel and J. Trlifaj, Approximations and Endomorphism Algebras of Modules, GEM 41. Berlin-New York: Walter de Gruyter. (2006).
\bibitem{ELG} E. L. Green and C. Psarounddkis, On Artin algebras arising from Morita contexts, Algebr. Represent. Theory 17 (2014), no. 5, 1485--1525.

\bibitem{HJ} J. He and J. He, n-cotorision pairs and recollements of extriangulated categories, J. Algebra Appl. (2025), accepted online.

\bibitem{HM} M. Hovey, Cotorsion pairs, model category structures and representation theory, Math. Z. 241 (2002), no. 3, 553--592.
\bibitem{HJS} J. S. Hu, H. Y. Zhu and R. M. Zhu, Gluing and lifting exact model structures for the recollement of exact categories, Bull. Malays. Math. Sci. Soc. 48 (2025), no. 5, 173.
\bibitem{LVY14} Q. H. Liu, J. Vit\'{o}ria and D. Yang, Gluing silting objects, J. Nagoya Math. 216 (2014), 117--151.
\bibitem{MX2} X. Ma and Z. Y. Huang, Torsion pairs in recollements of abelian categories, Front. Math. China. 13 (2017), no. 4,
875--892.
\bibitem{MZ25} X. Ma and P. Y. Zhou, Dimensions and cotorsion pairs in recollements of extriangulated categories, Comm.
Algebra 53 (2025), no. 1, 242--258.

\bibitem{MX} X. Ma and P. Y. Zhou, n-cotorision pairs in a recollement of extriangulated categories, arXiv:2508.20331.

\bibitem{ML} L. X. Mao, Cotorsion pairs and approximation classes over formal triangular matrix rings, J. Pure Appl. Algebra. 224 (2020), no. 6, 106271.


\bibitem{Psa14} C. Psaroudakis, Homological theory of recollements of abelian categories, J. Algebra. 398 (2014),
	63--110.
\bibitem{PV14} C. Psaroudakis and J. Vit\'{o}ria, Recollements of module categories, Appl. Categ. Struct. 22 (2014), no. 4, 579--593.


\bibitem{SL} L. Salce, Cotorsion theories for abelian groups, Symposia Math. 23 (1979), 11--32.

\bibitem{TO79} H. Tachikawa, K. Ohtake, Colocalization and localization in Abelian categories, J. Algebra 56 (1979),
1--23.

\bibitem{WWZ22} L. Wang, Q. J. Wei and  H. C. Zhang, Recollements of extriangulated categories, Colloq. Math. 167 (2022),
239--259.

\bibitem{YQ26} J. R. Yang and Y. Y. Qin, ICE-closed subcategories and epibricks over recollements, Algebra Colloq., accepted.

\bibitem{ZC17} C. Zhang, On the global cohomological width of artin algebras, Colloq.
Math., 146 (2017), 31--46.

\bibitem{Z17} Y. Y. Zhang, Reduction of wide subcategories and recollements,  Algebra Colloq. 30 (2023), no. 4, 713--720.

\bibitem{ZR} R. M. Zhu, Y. Y. Peng and N. Q. Ding, Recollements associated to cotorsion pairs over upper triangular matrix rings, Publ. Math. Debrecen
98 (2021), no. 1, 83--113.
 \end{thebibliography}
\end{document}